\documentclass[12pt]{amsart}
\usepackage{amsmath,bbm,amsfonts,dsfont,amssymb,amsxtra,amscd,enumerate,amsthm,yfonts,verbatim}
\usepackage{color,xcolor}
\usepackage{latexsym}
\usepackage{epsfig}
\usepackage{fullpage}
\usepackage{hyperref}
\usepackage{comment}
\usepackage{enumitem}
\usepackage{esint}

\newcommand{\e}{\ensuremath{\mathbf{e}}}

\newcommand\les{\lesssim}
\newcommand\ges{\gtrsim}

\newcommand\R{\mathbb{R}}
\newcommand\C{\mathbb{C}}
\newcommand\Z{\mathbb{Z}}
\newcommand\N{\mathbb{N}}
\renewcommand\S{\mathbb{S}}

\newcommand{\calQ}{\mathcal Q}

\newcommand\la{\langle}
\newcommand\ra{\rangle}

\newtheorem{theorem}{Theorem}[section]
\newtheorem{lemma}[theorem]{Lemma}
\newtheorem{proposition}[theorem]{Proposition}
\newtheorem{corollary}[theorem]{Corollary}
\newtheorem{definition}[theorem]{Definition}

\newtheorem{r1}{Remark}

\numberwithin{equation}{section}

\newcommand{\Ec}{\tilde{E}}

\usepackage{mathtools}
\mathtoolsset{showonlyrefs}

\begin{document}

\title[A resolution space ]{A new resolution space for nonlinear Schr\"odinger equations and applications}

\author{Ioan Bejenaru }
\address{Department of Mathematics, University of California, San Diego}
\email{ibejenaru@ucsd.edu}

\begin{abstract}  
Resolution spaces play a central role in constructing solutions for nonlinear partial differential equations. 
One of the main goals in the area of nonlinear dispersive PDEs has been to construct effective resolution spaces which capture the known bilinear restrictions estimates for free solutions. In this paper we propose a new structure for the Schr\"odinger equation which effectively replicates the classical bilinear $L^2_{t,x}$ estimate. In addition, the new structure 
has the property that its "dual" is an effective candidate for a space for the forcing in the linear inhomogeneous Schr\"odinger equation, a feature that has been elusive so far in the literature. As an application, we show how these structures can recover the known global well-posedness results for derivative NLS with null structure, with Schr\"odinger Maps being one such model.

\end{abstract}

\subjclass{Primary:  	35Q41, 35Q55, 42B37
Secondary: 35B40  
}
\keywords{Resolution space, Nonlinear Schr\"odinger equation, Schr\"odinger maps.}

\maketitle

\setcounter{tocdepth}{1}
\tableofcontents

\section{Introduction}

In this article we propose a new solution to a classical problem in the field of dispersive PDEs: construct effective resolution spaces which inherit  as much as possible of the behavior of free solutions. The most basic set of properties of free solutions (with data in $L^2$) is the family of Strichartz estimates, and it is generally the case that resolution spaces (used in the literature) provide them. Under some transversality assumptions, free solutions enjoy additional estimates, such as bilinear or higher order of multilinearity type estimates, which reflect interactions between waves and are more difficult (or impossible) to capture using linear estimates. Designing effective resolutions spaces to capture such estimates has been the subject of intense research. 

Let us recall the bilinear $L^2_{t,x}$ estimate for the Schr\"odinger equation, 
\begin{equation} \label{bilS}
\|e^{it\Delta} f \cdot e^{it\Delta} g  \|_{L^2_{t,x}} \les 2^{\frac{(n-1)k_1-k_2}2} \|f\|_{L^2(\R^n)} \|g\|_{L^2(\R^n)},
\end{equation}
where $f$ is localized at frequency $\approx 2^{k_1}$, $g$ is localized at frequency $\approx 2^{k_2}$ and $k_1 \leq k_2$. If $n=1$, \eqref{bilS} requires additional transversality assumptions when $|k_1-k_2| \les 1$, but it holds as stated if $n \geq 2$. In this paper we work in dimensions $n \geq 2$ and all the statements made from here on are meant for $n \geq 2$. 

A similar estimates hold true for the wave equation:
\begin{equation} \label{bilW}
\|e^{it|\nabla|} f \cdot e^{it|\nabla|} g  \|_{L^2_{t,x}(\R^{n+1})} \les 2^{\frac{(n-1)k_1}2} \theta^{\frac{n-3}2} \|f\|_{L^2(\R^n)} \|g\|_{L^2(\R^n)},
\end{equation}
where $f,g$ are localized at frequency $2^{k_1}$, respectively $2^{k_2}$ and the angle between their support is $\approx \theta$. 
Similar estimates to \eqref{bilS} and \eqref{bilW} hold for other linear dispersive flows. It is very important to replicate the estimates \eqref{bilS} or \eqref{bilW} (or the equivalent for other flows) beyond the setup of free waves and such that the new setup accommodates actual solutions to nonlinear Schr\"odinger equations, or other dispersive flows.  Schr\"odinger Maps and Wave Maps are two such examples.

We now formalize the concept of a resolution space and that of a transference theory. We do so in the context of a nonlinear Schr\"odinger equation, but this outline can be adapted to any other nonlinear dispersive PDE.

Consider a general nonlinear Schr\"odinger equation:
\[
(i \partial_t + \Delta)u = N(u, \nabla u), \quad u(0,x)=u_{0}(x),
\]
where $u: \R \times \R^n \rightarrow \C$. To keep the presentation compact, we ignore the regularity at which one carries the theory. One seeks to identify two spaces $X,Y$ (with $X$ being a Banach space) with the following two properties:

1) The solvability of the inhomogeneous equation
\begin{equation} \label{LXY}
(i \partial_t + \Delta)u = f, \quad u(0,x)=u_{0}(x)
\end{equation}
holds true as follows
\[
\|u\|_{X} \les \|f\|_{Y} + \|u_0\|_{L^2(\R^n)}.
\]

2) Control of the nonlinearity
\begin{equation} \label{Nuvb}
\|N(u,\nabla u)\|_Y \les \| u \|_X^p
\end{equation}
where $p$ is the "degree" of the nonlinearity of $N(u)$; variations may appear. 

If, in addition, $X^* \subset Y$ (in a sense that needs to be made precise), then, for the purpose of \eqref{Nuvb}, it suffices to establish instead
\begin{equation} \label{NXY}
|\la N(u, \nabla u), v \ra| = |\int N(u,\nabla u) \bar v dx dt| \les \| u \|_{X}^p \|v\|_{X},
\end{equation}
Indeed, this guarantees that $N(u,\nabla u) \in X^* \subset Y$ with the appropriate bounds and \eqref{Nuvb} follows. 

$X$ is referred to as the resolution space for the underlying nonlinear PDE. One can add regularity to $X$, directly or by defining spaces $X_k$ meant to measure solutions localized at frequency $\approx 2^k$ and then summing the $X_k$ information in a fashion similar to how Sobolev regularity is recouped from information on the frequency localized components. 

Next we formalize the transference theory. Assume that we have a multilinear bound that holds for free solutions, that is for some $1 \leq k \leq n$,
\begin{equation} \label{mre}
\| \Pi_{j=1}^k e^{it\Delta} f_j \|_{S} \les \Pi_{j=1}^k \|f_j\|_{L^2_x},
\end{equation} 
where $S$ is a Lebesgue space $L^p_{t,x}$ or mixed one $L^p_{t} L^q_x$ (or potentially other spaces), the functions $f_j$ may be required to have some support/transversality conditions, and the constant in $\les$ is allowed to depend on some parameters characterization the support properties of $f_j$'s (such as frequency localization).  The space $X$ is amenable to transference theory if
\begin{equation}
\| \Pi_{j=1}^k u_j \|_{S} \les \Pi_{j=1}^k \|u_j\|_{X},
\end{equation} 
under similar requirements on the support of $u_j$ and with the same constant used in $\les$; the universal constant may be different, but the constant depending on the support properties of the inputs should be the same. Examples of such estimates are: the standard linear Strichartz estimates, the bilinear $L^2_{t,x}$ estimates such as \eqref{bilS} and \eqref{bilW}, the bilinear $L^p_{t,x}$ with $p<2$ counterpart of \eqref{bilS} and \eqref{bilW} (this will be detailed later in this Introduction), and $k$-linear estimates with $k \geq 3$, such as those developed by the author in \cite{Be-mrec}. There are also "broad" versions of the \eqref{mre}, see for instance \cite{Gu-I} an \cite{Gu-II}, but in PDE applications one needs the full estimate in \eqref{mre}. For the rest of this paper the focus will be on transferring the bilinear estimate \eqref{bilS}. 

We can now explain why the property $X^* \subset Y$ is desirable. If such a resolution space satisfies $X^* \subset Y$ and it is amenable to a transference theory, then we reach an ideal setup where, in the analysis of the nonlinear estimate \eqref{NXY}, we can deploy any known multilinear restriction estimate known for free solutions. 

Achieving the two properties mentioned above runs into a rather delicate balance. For $X$ to be amenable to a transference theory, it is helpful to make it smaller; this makes $X^*$ larger to the point of failing the solvability component, that is we fail to achieve \eqref{LXY} when $f \in X^*$.  To our best knowledge, constructing a resolution space $X$ which transfers the bilinear estimate \eqref{bilS} and has the property $X^* \subset Y$ has not yet been yet achieved. 

Let us now turn to concrete examples of how the above has been implemented in various contexts.

The linear transference setup ($k=1$) is the easiest. Consider a power type nonlinearity $N(u)=u^{r}$ or $=|u|^{r-1} u$ for some $r \in \N, r \geq 2$. If one can establish an estimate of type
\[
|\la N(u), v \ra| \les \|u\|^p_{X} \|v\|_{X},
\]
where $X=\cap L^p_t L^q_x$ is a finite intersection of admissible Strichartz spaces, then by letting $Y= X^*=\sum (L^p_t L^q_x)^*$, one achieves the desired result.

Variations of the above scheme, which are essentially based on the linear Strichartz estimates have been very successful in tackling many problems in dispersive PDEs. For a comprehensive introduction to this topic the books of Cazenave \cite{Caz-book} and Tao \cite{Tao-book} are very good references. 

In the above we presented a caricature of this argument, where we stripped off the regularity. If one needs to account for regularity, $N(u)$ is replaced by $N(u_1,..,u_{p})$ where each $u_i$ is localized at frequency $2^{k_i}$ and assume that $v$ is localized at frequency $2^k$, which essentially means that we estimate $P_k N(u_1,..,u_{p})$. The spaces may be frequency adapted, that is we measure each $P_k f$ in the corresponding $X_k$ and the constants that appear in $\les$ will depend on the variables $k_1,..,k_p, k$, that is we obtain an estimate with $\les C(k_1,..,k_p,k)$. The precise formula giving $C(k_1,..,k_p,k)$ dictates the (minimal) regularity $s$ that is needed to close the iterative argument. 

Regardless of how much variation of the above caricature scheme one uses, the crucial ingredient is that
one relies on the linear Strichartz estimates, and the standard linear theory for \eqref{LXY} recovers the Strichartz estimates when the forcing $f$ belongs to a dual Strichartz space. In other words, the standard theory providing the Strichartz estimates is effectively providing the necessary transference theory in this setup. 

On the other hand, replicating the above scheme for problems where a bilinear estimate such as \eqref{bilS} is needed  is a more difficult task; for instance such estimates are necessary in nonlinear Schr\"odinger equations which contain derivatives. 
As a consequence, seeking resolution spaces which could emulate \eqref{bilS} (or its counterpart for other flows) has been an important problem in nonlinear dispersive PDEs. There are a few candidates which have been effective to various extents which we now explain. 

One of the main directions was to construct resolution spaces whose elements can be thought as "formal superpositions" of free solutions to the underlying PDE. The  "$X^{s,b}$" spaces which appeared in the works of Beals \cite{Bea} (for the wave equation), Bourgain \cite{Bo-1, Bo-2} (for the Schr\"oedinger and KdV equations) and Klainerman and Machedon \cite{KlMa} (for the wave equation) were the first structures known to be effective in replicating estimates known for bilinear interactions of free waves. $s$ corresponds to classical Sobolev regularity and we set it to be $s=0$ in this paragraph; doing so brings no changes to the main point made below.  For local in time theories, taking $b> \frac12$ ensures that $X^{0,b}$ spaces are effective in providing a transference theory, while for global in time theories the space $X^{0,\frac12,1}$ plays the same role. 
However, in both cases, the $X^*$ is not a good candidate for solvability of \eqref{LXY}. Indeed, $(X^{0,b})^* = X^{0,-b}$ and $(X^{0,\frac12,1})^* = X^{0,-\frac12, \infty}$ and the solution of \eqref{LXY} with forcing in such a space ends up in $X^{0,-b+1}$, correspondingly in $X^{0,\frac12,\infty}$ which are both strictly larger than the original $X^{0,b}$ or $X^{0,\frac12,1}$, and, most importantly, they fail to be effective in a transference theory.  However, in combination with other structures, particularly the end-point Strichartz estimates and energy type estimates, these spaces have been extremely effective in providing a good resolution space for constructing global solutions to difficult nonlinear PDEs. 

A second direction was undertaken by Tataru in his attempt to tackle the Wave Map equation in low dimensions. In an unpublished manuscript, Tataru introduced the  $U^2,V^2$ spaces (adapted to the wave equation) and attempted to prove the global well-posedness of the wave map equation in low dimensions using these spaces, with one of the key ingredients being that these structures transfer the bilinear $L^2_{t,x}$ estimate for free waves \eqref{bilW}. While an error was found by Nakanishi in that manuscript, later works by Tataru and Koch \cite{KT, KT1,KT2}, Hadac, Herr, Koch \cite{HHK}, Herr, Tataru, Tzvetkov \cite{HeTaTz}, to name just a few, have successfully used these spaces in various contexts. In particular, transference properties have been formalized by Hadac, Herr, Koch  \cite{HHK}, see Proposition 2.19; yet in all these works the bilinear $L^2_{t,x}$ for free solutions is recovered when $X=U^2_{flow}$ ($U^2$ adapted to the flow), but losses occur once $V^2_{flow}$ is involved; this is needed because if one starts with $X=U^2_{flow}$, then when solving \eqref{LXY} with forcing in $(U^2_{flow})^*$, the solution is returned in $V^2_{flow}$ - more details about this process are provided in section \ref{CRS}. Later in this introduction we discuss some more recent refinements in the works Candy-Herr \cite{CaHe-t, CaHe} and Candy \cite{Ca} in this context. 

Introduced by Tataru, in the context of the wave maps problem, the last and most effective direction was to circumvent entirely a transference theory and instead obtain the bilinear estimate from linear estimates via the caricature estimate $L^2 L^\infty \cdot L^\infty L^2 = L^2_{t,x}$. In high dimensions this works well given that the end-point Strichartz estimate $L^2_t L^\infty_x$ is available for the wave equation ($n \geq 4$); in combination with the $X^{0,\frac12,1}$ space, Tataru \cite{Ta-wm1} established the global well-posedness for the wave maps problems with small data in the Besov space when $n \geq 4$. However this runs into problems in low dimensions, that is when $n=2,3$. In \cite{Ta-2}, again in the context of the wave map equation, Tataru recovered such estimates in adapted frames: fairly rigid frames (and many of them) are needed to capture estimates of type $L^2_{t_\Theta} L^\infty_{x_\Theta}$, while energies of type $L^\infty_{t_\Theta} L^2_{x_\Theta}$ are more flexible and capture the underlying geometry of wave propagation. The frame method was efficiently used in the Schr\"odinger Map problem \cite{BIKT-1}, as well as in Klein-Gordon type problems, see \cite{BeHe}. We note however that these structures do not have the property $X^* \subset Y$ mentioned in \eqref{NXY}.

To summarize the current state of the art around the transference theory, the current structures, whether $X^{s,b}$ spaces or $U^2,V^2$ adapted spaces, are partially, but not completely, amenable to a bilinear transference theory: in all cases one encounters at least logarithmic (or even power) losses over what the bilinear restriction theory for free wave records. The spaces build on adapted frames do not have the property $X^* \subset Y$. 

In this article we propose a new resolution space for the Schr\"odinger equation which has the two key desired properties:
it is amenable to a transference theory and its dual acts as an effective structure for the inhomogeous equation; this is why we have two main theorems. The starting point is that for each $k \in \Z$ and $m \in \Z, m \leq 2k$ we can construct a space $X_{k,m}$ such that the following two theorem hold true.

\begin{theorem} \label{MTbil}
Assume $u \in X_{k_1,2k_1}, v \in X_{k_2,2k_1}$ are localized at frequency $\approx 2^{k_1}$, respectively $\approx 2^{k_2}$, where $k_1 \leq k_2$. Then the following holds true:
\begin{equation} \label{uvL2}
\| u \cdot v \|_{L^2_{t,x}} \les 2^{\frac{(n-1)k_1-k_2}2} \|u\|_{X_{k_1,2k_1}}  \|v\|_{X_{k_2,2k_1}}.
\end{equation}
\end{theorem}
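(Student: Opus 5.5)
Since the spaces $X_{k,m}$ have not yet been defined at this point, I will work with the structure the construction is evidently designed to have. $X_{k,m}$ should be an atomic space adapted to frequency $2^k$ and modulation/time scale $2^{-m}$. For $m=2k_1$, the relevant time scale is $2^{-2k_1}$, and at frequency $2^{k_2}$ a wave packet travels a distance $2^{k_2-2k_1}$ in that time. Concretely, I expect a typical atom of $X_{k_2,2k_1}$ to be a sum $\sum_{I} \chi_I(t) e^{i(t-t_I)\Delta} f_I$ over time intervals $I$ of length $\approx 2^{-2k_1}$ (with smooth cutoffs $\chi_I$), normalized so that $(\sum_I \|f_I\|_{L^2}^2)^{1/2}\les 1$. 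It may also have further sum-over-tubes / $U^2$-type components on larger scales. The plan is to reduce \eqref{uvL2} to the free estimate \eqref{bilS} atom by atom, with square summability supplied by the $\ell^2$ structure across intervals.

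\textbf{Step 1 (reduction to atoms).} By the atomic definition and the triangle inequality in $L^2_{t,x}$, it suffices to prove \eqref{uvL2} for $u,v$ single atoms of $X_{k_1,2k_1}$ and $X_{k_2,2k_1}$ respectively.

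\textbf{Step 2 (one interval at a time).} Both atoms are built on the common time scale $2^{-2k_1}$, so after refining to a common partition, $u$ and $v$ are free solutions on each interval $I$, say $u=e^{i(t-t_I)\Delta}g_I$ and $v=e^{i(t-t_I)\Delta}f_I$ there. By orthogonality in time (the cutoffs have bounded overlap),
\[
\|uv\|_{L^2_{t,x}}^2 \les \sum_I \|\chi_I e^{it\Delta}g_I\, e^{it\Delta}f_I\|_{L^2_{t,x}}^2 \les 2^{(n-1)k_1-k_2}\sum_I \|g_I\|_{L^2}^2\|f_I\|_{L^2}^2 .
\]
The last bound is \eqref{bilS} applied on each interval; multiplying by $\chi_I$ only improves the estimate. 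Bounding this by $\sup_I\|g_I\|^2\sum_I\|f_I\|^2$ gives the claim, because the $X_{k_1,2k_1}$ atom controls $\sup_I\|g_I\|_{L^2}$ (an $\ell^2\subset\ell^\infty$ embedding). The only requirement here is that the low-frequency factor need not be square-summable in $I$.

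\textbf{Step 3 (the main obstacle).} The difficulty is that the spaces presumably carry information at \emph{longer} time scales as well, as they must if $X^*$ is to solve \eqref{LXY} globally. A pure $\ell^2$-sum of $2^{-2k_1}$-intervals would recover \eqref{bilS} trivially, but it does not control global evolution. I therefore expect the actual atoms of $X_{k_2,2k_1}$ to involve $v$ being a free wave on longer intervals (or a $U^2$-type superposition), with $u$ still built on $2^{-2k_1}$ pieces. I would handle this by decomposing the longer-lived factor into pieces restricted to each $I$ and applying \eqref{bilS} on each $I$. The key point is that the free-wave restriction of $v$ to $I$ has $L^2$ norm equal to that of its datum, independent of $I$. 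Summability must then come from the $\ell^2$ structure of $u$ across intervals, which swaps the roles of the two factors in Step 2. Checking that every atom type admits one factor that is $\ell^2$ in $I$ and another that is uniformly bounded in $I$, without logarithmic loss, is where I expect the real work. In particular, the transversality gain $2^{-k_2}$ in \eqref{bilS} must survive on intervals as short as $2^{-2k_1}$. It does, since a high-frequency packet crosses a low-frequency packet of width $2^{-k_1}$ in time $2^{-k_1-k_2}\le 2^{-2k_1}$.
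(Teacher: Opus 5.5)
There is a genuine gap, and it lies in the part you defer to Step~3. The space is not atomic. It is $X_{k,m}=V^2_\Delta\cap X_k\cap X_k^{\geq m}$: the space $V^2_\Delta$, augmented by the refined mass norm $E_k$, the local smoothing norms $E_{k,\e}$, and their high-modulation versions, which carry $\ell^2$ summation over $I\in\mathcal I_{-m}$, $q\in C_{-m+k}$. So Step~1 has nothing to reduce to, and Steps~1--2 only treat $U^2_\Delta$-type objects. For those, \eqref{uvL2} is already classical: expand two $U^2_\Delta$ atoms and apply \eqref{bilS} on each $I_j\cap J_l$.

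More seriously, the mechanism you propose for Step~3 fails when both factors are long-lived. That mechanism is: one factor square-summable over intervals of length $2^{-2k_1}$, the other uniformly bounded. Two free waves must belong to the space by Theorem~\ref{MTl}~i), and neither is $\ell^2$ over $\mathcal I_{-2k_1}$. In that case the bound has to come from a global bilinear estimate for general $V^2_\Delta$ inputs. Standard arguments give this transference only with a logarithmic loss, because $V^2_\Delta\subset U^p_\Delta$ only for $p>2$, and an $\ell^p$ decomposition with $p>2$ is not enough to sum the squares.

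The paper rescales to $k_2=0$, $k_1=k$, and splits at modulation $2^{2k}$. When one factor has modulation $\geq 2^{2k}$, your heuristic is essentially right, but it is implemented by H\"older, not by \eqref{bilS} on each interval (pieces of $V^2_\Delta$ functions on an interval are not free waves). The three factors are handled as follows:
the high-modulation factor is square-summable over $I\in\mathcal I_{-2k}$ through the $X^{\geq 2k}$ structures;
the low-frequency factor is square-summable over $q\in C_{-k}$ through $E_k$, and is then placed in $L^\infty_{t,x}$ by a refined Bernstein inequality at cost $2^{nk/2}$;
the unit-frequency factor is bounded in $L^2_{t,x}(I\times q)$ by $2^{-k/2}$ via local smoothing \eqref{LEX}.

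The missing idea is the low-modulation interaction $Q_{<2k}u\cdot Q_{<2k}v$, which uses only $V^2_\Delta$. The paper interpolates two estimates. The first is the improved bilinear restriction estimate \eqref{LbilfwU2}: for $U^2_\Delta$ inputs it gives $\ell^p$-summability of $\|uv\|_{L^2(q)}$ over space-time cubes $q\in\mathcal C_{-2k}$, for $\frac{n+3}{n+1}<p<2$. The second is the soft single-cube bound \eqref{linfq}, valid for low-modulation $L^\infty_tL^2_x$ inputs via the $X^{0,\frac12,1}$ bound of Lemma~\ref{lmX}. Interpolation yields the $\ell^2$-over-cubes bound, i.e.\ the full $L^2_{t,x}$ bound, for $U^{4/p}_\Delta$ inputs, and $V^2_\Delta\subset U^{4/p}_\Delta$ since $4/p>2$. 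Without the $p<2$ bilinear restriction theory, or some substitute, your scheme cannot close this long-time, low-modulation case without loss.
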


\begin{theorem} \label{MTl}
i) Homogeneous equation: assume $g \in L^2_x$ is localized at frequency $2^k$ and $m \leq 2k$. Then $e^{it\Delta} g \in X_k$ with the bound
\[
\| e^{it\Delta} g \|_{X_{k,m}} \les \|g\|_{L^2_x}. 
\]
ii) Consider the inhomogeneous equation 
\[
(i \partial_t + \Delta) u =f, \quad u(0) = 0. 
\]
We assume that $f \in L^\infty_t L^2_x$ is localized at frequency $\approx 2^{k}$ and the following holds true
\begin{equation}
|\la f, \phi \ra | \leq C \|\phi\|_{X_{k,l}}
\end{equation}
for any $\phi \in X_{k,l}$, where $l \leq 2k$. Then the following holds true:
\begin{equation}
\sup_{m \leq 2k} \| u \|_{X_{k,m}} \les C.
\end{equation}
\end{theorem}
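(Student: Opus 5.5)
The space $X_{k,m}$ has not yet been defined at this point, so the plan has to anticipate its structure. I would model it on the heuristic the theorem encodes. At time scale $2^{-m}$ (with $m\le 2k$), a frequency-$2^k$ solution is a superposition of free waves on time intervals of length $2^{-m}$. Each piece is further decomposed spatially into tubes or boxes adapted to the scale $2^{-m}$, and the pieces are measured in a square-summed (atomic-type) norm. For Theorem \ref{MTl} the time scale $m$ is arbitrary; for Theorem \ref{MTbil} the relevant scale is $m=2k_1$. So I assume $X_{k,m}$ is an atomic space built from pieces $\chi_I(t)\, e^{i(t-t_I)\Delta} g_I$, with $|I|=2^{-m}$ and $\sum_I \|g_I\|_{L^2}^2$ controlled. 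The norm is complemented by a lateral-energy or $L^2_t$-type control across intervals that prevents the square-sum from losing when $m$ varies.

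Part i) is then immediate. Chop $\R$ into intervals $I$ of length $2^{-m}$ and set $g_I=e^{it_I\Delta}g$. The issue is that $\sum_I\|g_I\|^2$ diverges, so the square-summed component cannot just be the plain $\ell^2$ sum over all intervals. I would instead make the atoms respect the dual pairing: the natural $X_{k,m}$ norm is a $\sup_I$ over intervals, i.e.\ the $L^\infty_t L^2_x$-type energy, combined with the localized bilinear structure. A free wave trivially has norm $\|g\|_{L^2}$.

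For part ii) I would use Duhamel, $u(t)=-i\int_0^t e^{i(t-s)\Delta}f(s)\,ds$, and then argue by duality. To bound $\|u\|_{X_{k,m}}$, I would first characterize the norm of $X_{k,m}$ (or at least its energy component) by testing against functions $\psi$ with $(i\partial_t+\Delta)^*$-structure. The key step is to show that, for every $m\le 2k$ and every test function $\phi$ solving the backward equation $(i\partial_t+\Delta)\phi=\psi$ with $\phi(T)=0$ cut off to a time interval, one has
\[
\|\phi\|_{X_{k,l}}\les \|\psi\|_{\text{dual of } X_{k,m}}.
\]
Then $\langle u,\psi\rangle=\langle f,\phi\rangle\le C\|\phi\|_{X_{k,l}}$, which gives the claimed bound. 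In other words, the whole statement reduces to a uniform-in-$(m,l)$ version of part i) for truncated free waves $\mathbf 1_{[a,b]}(t)e^{it\Delta}h$. This is the standard $U^2$/$V^2$ mechanism, except that here one needs truncations at arbitrary times to cost nothing in $X_{k,l}$ for any $l$.

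I expect the main obstacle to be this uniformity across scales. A truncation $\mathbf 1_{[a,b]}$ on a time interval much shorter than $2^{-l}$ produces frequency spreading in $\tau$. That spreading must not break the atom structure at scale $2^{-l}$; otherwise one is back to the $V^2$ logarithmic loss. My first attempt would be to show that a sharp time cutoff of a free wave lies in $X_{k,l}$ with norm $\les\|h\|_{L^2}$ for all $l\le 2k$, using that $2^{-l}\ge 2^{-2k}$, so the cutoff's effect on the frequency-$2^k$ wave is controlled by the dispersion time $2^{-2k}$. I would handle the boundary intervals by hand and the interior ones by part i).
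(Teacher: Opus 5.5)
There is a genuine gap, and it begins with the guessed structure of $X_{k,m}$. In the paper $X_{k,m}$ is not an atomic space of square-summed truncated free waves. It is the intersection $V^2_\Delta\cap X_k\cap X_k^{\geq m}$ with norm $\|v\|_{V^2_\Delta}+\|v\|_{X_k}+\|Q_{\geq m}v\|_{X_k^{\geq m}}$. Here $X_k$ adds the refined mass norm $E_k$ and the lateral local smoothing norms $2^{k/2}\|P_{k,\e}v\|_{L^\infty_{x_\e}L^2_{t,\bar x_\e}}$. The space $X_k^{\geq m}$ adds $\ell^2$ summation over time intervals of length $2^{-m}$ and cubes of side $2^{-m+k}$. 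So even part i) is not ``trivial''. You need the kernel decay of $e^{it\Delta}P_k$ for $E_k$, and the Kato-type smoothing estimate \eqref{limaxa} in each direction $\e$. The high-modulation part does vanish for a global free wave. The paper gets i) from Corollary \ref{U2Xc}.

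In part ii), your duality mechanism is correct only for the $V^2_\Delta$ component. Since $U^2_\Delta$ functions lie in $X_{k,l}$ uniformly in $l$, the hypothesis gives $|\langle f,\phi\rangle|\lesssim C\|\phi\|_{U^2_\Delta}$, hence $u\in V^2_\Delta$ by Theorem \ref{LDUD]}. The uniform inclusion is Corollary \ref{U2Xc}, which rests on Lemmas \ref{U2dec} and \ref{U2hm2}; this is your ``truncations cost nothing'' step, and it is exactly how the proof of Lemma \ref{Pinh} opens.

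The reduction of everything else to truncated free waves fails. To norm the $X_k$ and $X_k^{\geq m}$ parts of $u$ you must test against forcings in $F_k$, $F_{k,\e}$, $F_k^{\geq m}$ and $F_{k,\e}^{\geq m}$. The corresponding backward solutions are not $U^2_\Delta$ superpositions of truncated free waves. Suppose every $\psi\in F_{k,\e}$ produced a $U^2_\Delta$ solution. Then $F_{k,\e}\subset DU^2_\Delta$, and dualizing, $V^2_\Delta$ functions would enjoy local smoothing. That is precisely the missing property that forced the augmentation of $V^2_\Delta$; only the $V^2_\Delta$ bound of Lemma \ref{U2lat} is available. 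So your inequality $\|\phi\|_{X_{k,l}}\lesssim\|\psi\|_{(X_{k,m})^*}$ is just the time-reversed statement of the theorem, not a simplification.

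What is missing is the linear theory for these forcings. The paper supplies it after Lemma \ref{Ldual} places $f$ in $Y_{k,l}=DU^2_\Delta+Y_k+Y_k^{\geq l}$, using the a priori bound $f\in L^\infty_tL^2_x$ to make the duality legitimate.
\begin{itemize}
\item For $F_k$: reduce to unit time intervals, use off-diagonal decay between cubes \eqref{uqdec}, and localize the inhomogeneous local smoothing of Lemma \ref{LEinh} to cubes by commuting with $x+2it\nabla$. Then prove boundedness of $Q_{<m}$ on $E_0^{\geq m}$, and of the lateral projections $Q^{\e,-}_{<m}$ on $E_{0,\e}^{\geq m}$.
\item For $F_k^{\geq l}$: solve on intervals of length $2^{-l}$ and glue via \eqref{U2int} and \eqref{U2fd}.
\item For $F_{k,\e}$ and $F_{k,\e}^{\geq l}$: remove modulations $\gtrsim 1$, then run the same analysis in lateral coordinates.
\end{itemize}
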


In a first reading of the above theorems it may be helpful to ignore the second parameter $m$ in the space $X_{k,m}$ so that one gets a sense of what the results are stating. Now we explain some features of the space $X_{k,m}$. First $k$ indicates that they meant to measure functions localized at frequency $\approx 2^k$. Second, the parameter $m$ is needed since in the analysis of the bilinear estimate \eqref{uvL2} we discover a modulation threshold which plays a key role - this depends on the low frequency. 
We could free the spaces from this second parameter and state the above results using $S_k = \cap_{m \leq 2k} X_{k,m}$ with the appropriate norm, but this would obscure some of the arguments and add some technicalities when using duality. Note however that 
the last estimate in the Theorem \ref{MTl} provides an estimate for $u$ precisely in $S_k$.  

As applications we show how these resolution spaces can be used to solve the division problem for derivative nonlinear Schr\"odinger equations, where the nonlinearity has a null structure. Such an example is the Schr\"odinger Map equation, where we provide an alternative proof for the classical result of global well-posedness for small and localized data in the Besov critical space. To be more precise our result in Theorem \ref{SMT} is alternative to the ones in \cite{Be2, IoKe-2} in dimensions $n \geq 3$, while the corresponding one (to the results in \cite{Be2, IoKe-2}) for $n=2$  has never been written but could be written based on the work in \cite{BIKT-1}. 

Our construction is not limited to the Schr\"odinger equation and we anticipate that similar constructions can be adapted to other flows, such as the wave equations. 

Let us put our work in the context of similar attempts to transfer bilinear estimates, particularly the work of Candy \cite{Ca}, and Candy and Herr \cite{CaHe}. Before doing so, it is helpful to review the bilinear restriction theory. For the Schr\"odinger equation this stated the following
\begin{equation} \label{bilSp}
\|e^{it\Delta} f \cdot e^{it\Delta} g  \|_{L^p_{t,x}} \les \|f\|_{L^2(\R^n)} \|g\|_{L^2(\R^n)},
\end{equation}
holds true if $p \geq \frac{n+3}{n+1}$, where $f,g$ are localized at frequency $\approx 1$ and their Fourier support is separated. 
Note that this improves the integrability of the product over the bilinear $L^2_{t,x}$ in \eqref{bilS} once $n \geq 2$.

Klainerman and Machedon conjectured that the equivalent estimate for the wave equation holds true in the same range of exponents; this was formalized in \cite{FoKl} by Foschi and Klainerman. After early progress by Bourgain \cite{Bou-CM}, and Tao and Vargas \cite{TV-CM1}, the full range for the wave equation $p >  \frac{n+3}{n+1}$  was established by Wolff in \cite{Wo}, while the end-point $p = \frac{n+3}{n+1}$ was established by Tao in \cite{Tao-BW} (both these results hold for all dimensions $n \geq 2$). Tao established the the full range for the Schr\"odinger equation $p >  \frac{n+3}{n+1}$ in \cite{Tao-BP}.
  
In \cite{CaHe-t} Candy and Herr initiated the study of transferring bilinear $L^p_{t,x}$ (such as \eqref{bilSp}) for a large class of phase functions and in the context of $U^2$ and $V^2$ adapted to the underlying flow as resolution spaces. They established, for the first time, that $U^2$ effectively transfers all the known bilinear $L^p_{t,x}$ estimates for free waves, while $V^2$ does so only if $p <2$; in earnest the first time where such a transferred estimate has been used was in \cite{StTa-1}. 
 
In \cite{Ca}, Candy provided a very general result for the transference of mixed Lebesgue type $L^p_t L^q_x$ estimates for large classes of phase, covering most flows that are needed in practice; the value of this work lies also in providing the general bilinear theory for free waves as well. In a nutshell, Candy establishes that $U^2_{flow}$ transfers all known bilinear estimates, while $V^2_{flow}$ transfers the estimates with $p <2$. Another interesting result that was established in \cite{Ca} was the following generalization of \eqref{bilW}
\begin{equation} \label{bilWU}
\|u \cdot v  \|_{L^2_{t,x}(\R^{n+1})} \les 2^{\frac{(n-1)k_1}2} \theta^{\frac{n-3}2} \|u\|_{U^2_{|\nabla|}} \|v\|_{V^2_{|\nabla|}},
\end{equation}
where $u,v$ are localized at frequency $2^{k_1}$, respectively $2^{k_2}$, and $k_1 \leq k_2$ and the angle between their support is $\approx \theta$. It was known, from the work of Candy-Herr \cite{CaHe-t} that the above was true if both terms were in $U^2_{|\nabla|}$; it was also desirable to establish the same estimate with $u,v \in V^2_{|\nabla|}$. Candy succeeded to obtain an "intermediate" estimate where the low frequency has to be kept in $U^2_{|\nabla|}$, but the high frequency can be relaxed to $V^2_{|\nabla|}$. Based on the crucial observation,  in \cite{CaHe} Candy and Herr solved the division problem for wave maps using the  adapted $U^2$ as a resolution space, just as Tataru intended in his original attempt; to be more precise they closed the global iteration argument for the wave map equation in the critical Besov space. 

However, \eqref{bilWU} seems to be limited to a wave type behavior and not known to hold true for, say, the Schr\"odinger equation. 
Our work does not fix this problem; instead we construct the spaces from ground-up and obtain the estimate \eqref{uvL2} which is 
 symmetric in the structures used. 

We also note that our result in Theorem  \ref{SMT} is the  Schr\"odinger Maps counterpart of the result for Wave Maps obtained by Candy and Herr in \cite{CaHe}. 

Returning to the statement of Theorem \ref{MTbil}, a natural question to ask is whether our structures transfer $L^p_{t,x}$ with $p<2$ (or mixed $L^p_{t} L^q_x$) estimates which are available for free solutions. The answer is 
positive as long as $p <2$ and it is covered by the theory developed by Candy in \cite{Ca}; this is simply a consequence of the fact that $V^2_\Delta$ is part of our resolution spaces and in \cite{Ca} it was shown that these estimates are transferred in the context of $V^2_\Delta$ if $p<2$. For the precise statements of available bilinear estimates of such type we refer the interested reader to \cite{Ca}.

\subsection{A summary of the key ideas} In this section we highlight the main ideas in our work.

In most prior literature, the transference theory has been attempted on given structures such as $X^{s,b}$ type spaces or $U^2, V^2$ adapted to the underlying flow. Our approach takes a different strategy: we aim to provide a transference theory, while seeking to identify the structures that are amenable to such a  theory. We highlight that, in some sense, our identification follows a first principle approach in that the identify what is needed and missing from the "base structure" and then augment it.

It is known that if one can close a transference theory at the level of the $V^2_\Delta$ structure, then the problem is solved since $V^2_\Delta$ would become a good resolution space by itself. But it is also known that is unlikely to have a transference theory solely based on the $V^2_\Delta$ structure. Our approach is to run as much as possible of the transference at the level of $V^2_\Delta$ structure and one of the novelties of this work is that one can do so if both factors have low modulations. Here we use the more advanced bilinear $L^p_{t,x}$ theory with $p<2$ in a modified way so as to essentially highlight an improvement of the actual bilinear $L^2_{t,x}$ for free waves outside cubes at a scale dictated by the frequency scales in our problem. This is then transferred to the $U^2_\Delta$ setup. 
On the other hand, within each such cube we obtain a very soft estimate that uses minimal information and an interpolation procedure gives the bilinear \eqref{bilS} for low modulation components in the $V^2_\Delta$ structure. 

Next, we consider the case when one of the terms has high modulation relatively to the basic scale. Here we identify the key information that is needed to close the arguments and that is reasonably expected to hold for solutions: local smoothing (for the high frequency) and a refined version of the standard mass estimate $L^\infty_t L^2_x$ (for the low frequency). Both these properties hold for free waves, but are not known to hold for functions in $V^2_\Delta$ and this is why we augment $V^2_\Delta$ with these structures. 

There is one last and important structure that we introduce, and which seems to be new in the literature. Once the modulation is bounded from below, we introduce an $l^2$ "decoupling" between the information on the physical space that are localized in blocks at the "dual scale". The "dual scale" is constructed  as follows: the dual time scale is simply the dual scale to the modulation and this is turn dictates the dual spatial scale based on the speed of propagation. This construction is specific to high modulation setup, as the estimates obtained do not hold true for free solutions. 

To summarize, we bring together ideas from the prior candidates, $X^{s,b}$ and $U^p,V^p$ (all adapted to the flow), the use of local smoothing and the improved bilinear $L^p_{t,x}, p < 2$ theory. Combining these with a new structure in high modulation gives rise to our resolution space.  

\subsection{Notation} \label{subsect:not}
We define $A\les B$, if there is a harmless constant $c>0$ such that $A\leq c B$, and $A\ges B$ iff $B\les A$. Further, we define $A\approx B$ iff both $A\les B$ and $B\les A$. Also, we define $A\ll B$ if the constant $c$ can be chosen such that $c<2^{-10}$. Also, $A\gg B$ iff $B\ll A$.

Similarly, we define $A\preceq B$ iff $2^A\les 2^B$, $A\succeq B$ iff $2^A\ges 2^B$, $A \sim B$ iff $2^A\approx 2^B$, $A\prec B$ iff $2^A\ll 2^B$, $A\succ B$ iff $2^A\gg 2^B$.

Let $\rho^0\in C^\infty_c(-\frac85,\frac85)$ be a fixed smooth, even, cutoff
satisfying $\rho^0(s)=1$ for $|s|\leq \frac54$ and $0\leq \rho\leq 1$. For
$k \in \Z$ we define $\rho_k: \R^n \to \R$,
$\rho_k(y):=\rho^0(2^{-k}|y|)-\rho^0(2^{-k+1}|y|)$. 
Let $\tilde{\rho}_k=\rho_{k-1}+\rho_k+\rho_{k+1}$ and notice that $\tilde{\rho}_k \cdot \rho_k=\rho_k$. For $k \in \Z$, let $P_k$ be the Fourier multiplier with symbol with respect to $\rho_k$ and  $\tilde P_k$ be the Fourier multiplier with symbol with respect to $\tilde{\rho}_k$; we record the identity $\tilde P_k P_k = P_k$. We say that $f$ is localized at frequency $2^k$ if $\hat f$ is supported in the annulus $\{\xi: 2^{k-1} \leq |\xi| \leq 2^{k+1}\}$. We note that $P_k f$ is localized at frequency $2^k$ and that $\tilde P_k f = f$ for any $f$ that  is localized at frequency $2^k$.  Next we define $P_{\leq k}=\sum_{0\leq k'\leq k }P_{k'}$, and $P_{> k}=I-P_{\leq k}$. 

For any  $\e\in\mathbb{S}^{n-1}$ and $k\in\Z$ we define the operators $P_{k,\e}, \tilde P_{k,\e}$ by the Fourier multipliers with symbol 
$\rho_k(\xi\cdot\e)$, respectively $\tilde \rho_k(\xi\cdot\e)$. 
We select a finite subset of vectors $V^n=\{\e_j: \e_j \in \mathbb{S}^{n-1} \}$ with the property that for any $f$ localized at frequency $2^k$ there exists a decomposition of $ f$ with the following property
\begin{equation} \label{decej}
 f = \sum_{j: \e_j \in V^n} f_j , \quad   \tilde P_{k,\e_j} f_j = f_j. 
\end{equation}
It is an easy exercise to construct  such a set which is also independent of the value of $k$. This is based on the observation that if $f$ is supported in the set $\{ \xi: |\xi \cdot \e| \geq (1-c) |\xi| \}$ for some universally small $c>0$ and some $\e\in\mathbb{S}^{n-1}$ then we have that $\tilde P_{k,\e} f =  f$; then we cover the sphere with finitely many such sets and construct a partition of unity subordinated to them. 

Next we introduce norms adapted to specific frames. For $\e \in \mathbb S^{n-1}$, we let $x_\e=x \cdot \e$ (the coordinate along the vector $\e$), and $\bar x_\e=x \cdot \e^\perp$ (the coordinates in the directions orthogonal to $\e$). For a function $h:\R^{n+1} \rightarrow \C$, we introduce the norms $\|h\|_{L^\infty_{x_\e} L^2_{\bar x_\e,t}}$ and $\|h \|_{L^1_{x_\e} L^2_{\bar x_\e,t}}$ in the standard fashion. Following  \cite{BIKT-1}, if one denotes by $H_\e$ the orthogonal complement of $\e$ in $\R^n$ with the induced measure and defines
the spaces $L^{\infty,2}_\e$ with norms
\begin{equation}\label{Lpqe}
  \|h\|_{L^{\infty,2}_{{\e}}}=
  \sup_{x_1} \Big[\int_{H_\mathbf{e}\times \mathbb{R}}
  |h(x_1\mathbf{e}+x',t)|^2\,dx'dt\Big]^{1/2},
\end{equation}
and similarly $L^{1,2}_{{\e}}$; it is easy to see that $ \|h\|_{L^{\infty,2}_{{\e}}} = \|h\|_{L^\infty_{x_\e} L^2_{\bar x_\e,t}}$ 
and $ \|h\|_{L^{1,2}_{{\e}}} = \|h\|_{L^1_{x_\e} L^2_{\bar x_\e,t}}$

For $k \in \Z$, we let $\mathcal{I}_k=\{ [2^{k}l,2^{k}(l+1)]: l \in \Z\}$ be the set of intervals of length $2^{k}$ covering $\R$; these are meant to be time interval throughout this paper. Also, $C_k$ is the set of cubes of side-length $2^{k}$ on the physical side covering $\R^n$ - this can be obtained by considering the standard $\Z^n$ lattice in $\R^n$ and rescale it by $2^{k}$. 

Given an interval $I \subset \R, |I| >0$, a bump function  $\varphi$ adapted to $I$ is defined in a standard fashion, that is it
 satisfies the following bounds
\[
| ((t-c(I)) |I|^{-1})^\alpha (|I| \partial_t)^\beta \varphi | \les_{\alpha,\beta} 1,
\]
for any $\alpha, \beta \in \N \cup \{0\}$. Here $c(I)$ is the center of $I$, but it can chosen to be any other point in $I$. We also denote $\chi_I$ to be the standard characteristic function of the interval $I$; we will also use the notation $\mathds{1}_I$. Since $\chi_I$ is not smooth, we let $\tilde \chi_I$ be a smooth approximation of $\chi_I$ in the sense that $\tilde \chi_I$ is a bump function adapted to $I$ with two additional properties: $\tilde \chi_I(t)=1, \forall t \in I$, thus $\tilde \chi_I \chi_I = \chi_I$, and it is supported in $2I$, the double of $I$. $2I$ is meant to be the double of $I$ from its center, that is if $I=[c(I)-\frac{|I|}2, c(I)+ \frac{|I|}2]$, the $I=[c(I)-|I|, c(I)+ |I|]$. Accordingly we will have the functions $\chi_{2I}$ and $\tilde \chi_{2I}$. 

Given a cube $q \in \R^n$ a bump function adapted to $q$ is defined similarly, $\chi_q$ is its characteristic function, and $\tilde \chi_q$ is the smooth approximation of $\chi_q$ (a bump function adapted to $q$, compactly supported in $2q$, the double of $q$ from its center, and with $\tilde \chi_q \chi_q =\chi_q$). 

The Fourier transform of a Schwartz function $f:\R^n \rightarrow \C$ is defined by
\[
\mathcal{F}(f)(\xi)=\hat f(\xi)= \int e^{-i x \cdot \xi} f(x) dx. 
\]
The inverse Fourier transform is defined by 
\[
\mathcal{F}^{-1}(g)(x)=\check g(x)= \frac1{(2\pi)^n} \int e^{i x \cdot \xi} g(\xi) d\xi.
\]
These definitions are then extended to distributions, in particular to $L^p(\R^n)$ spaces, in the usual manner. 

\bf Acknowledgments: \rm The author is grateful to his PhD students, Vitor Borges and Tiklung Chan, for their feedback on an earlier version of this manuscript. The author is also grateful to Timothy Candy for pointing out and explaining a result in \cite{Ca-note}, which we used as a reference for a compact transference argument at the end of the Appendix in this paper. 

\section{The construction of the resolution space} \label{CRS}

In this section we introduce our resolution spaces. We begin with recalling the basic theory of $U^p,V^p$ spaces. 
Then we identify some new properties in the context of some frequency localization. Then we recall their counterpart $U^p_\Delta,V^p_\Delta$ in the context of Schr\"odinger flow along with their standard properties. Finally we construct our key resolution space by augmenting the $V^2_\Delta$ space with three key structures: a refined version of the standard mass estimate,  local smoothing and refinements of these structures in the context of higher modulation.

\subsection{$U^p$ and $V^p$ spaces} 

In this section we provide a brief introduction of the $U^p,V^p$ spaces; the material here can be found in \cite{KT,HHK,KT1,KT2,CaHe} and the interested reader is advised to consult these papers for any additional details; we would suggest \cite{HHK}, \cite{KTV} and \cite{KT2} as the more comprehensive ones.

Below we consider intervals $(a,b)$ with $-\infty \leq a < b \leq +\infty$. A partition $\tau$ of $(a,b)$ is a finite monotone sequence $a < t_1 < ...< t_N =b$ 

We work with functions $u,v: (a,b) \rightarrow L^2(\R^n)$; as it is standard in this context, throughout this paper we work with function $u,v$ that are ruled in the sense that they have left and right limits everywhere. We also consider $1 \leq  p < \infty$ below. 

\begin{definition}  A $U^p(a,b)$ atom is a step function
\[
a(t)=\sum_{k=1}^K \mathds{1}_{[t_{k-1},t_{k})}(t) \phi_k , \quad \sum_{k=1}^K\|\phi_k\|_{L^2(\R^n)}^p=1,
\]
$U^p(a,b)$ is the atomic space based on these atoms, that is $u \in U^p(a,b)$ if
\[
u = \sum_{j \in \N} \lambda_j a_j,
\]
where $(\lambda_j) \in l^1$ and $a_j$ are atoms as above; $U^p(a,b)$ is equipped with the standard norm
\[
\|u\|_{U^p(a,b)} = \inf \{ \sum_{j \in \N}  |\lambda_j |:  u = \sum_{j \in \N} \lambda_j a_j \ \mbox{as above} \}. 
\]
\end{definition}

By definition, functions $u \in U^p$ are right continuous and $\lim_{t \rightarrow a+} u(t)=0$ in $L^2(\R^n)$. If we allow atoms to include the end-point $t_1=a$, we denote the new space by $\tilde U^p$; note that in this context we lose $\lim_{t \rightarrow a+} u(t)=0$. This is particularly useful when we involve the $U^p$ structure on subintervals of $(a,b)$. In this context we have the following result (see Lemma B.5 in \cite{KT2}):
\begin{equation} \label{U2int}
\| u_1 + u_2 \|^2_{\tilde U^2(a,b)}  \leq \| u_1 \|^2_{\tilde U^2(a,c)} + \| u_2 \|^2_{\tilde U^2(c,b)}, 
\end{equation}
where $c \in (a,b)$. 

\begin{definition} Given a function $v: (a,b) \rightarrow L^2(\R^n)$ we define
\[
|v|_{V^p} =\sup_{\tau=\{(t_j)_{j=1,N}\}} \left( \sum_j \| v(t_{j+1}) -v(t_j) \|_{L^2(\R^n)}^p \right)^\frac1p, 
\quad  \|v\|_{V^p} = |v|_{V^p} + \|v\|_{L^\infty_t L^2_x}. 
\]
We let $V^p=V^p(a,b)$ is the space of functions  $v: (a,b) \rightarrow L^2(\R^n)$ for which $ \|v\|_{V^p}$ is finite. 

\end{definition}

It is easy to show that if $v \in V^p$, then $v$ has lateral limits everywhere, including the end-points $a,b$ (this includes the case $a=-\infty$ and $b=+\infty$); for reference see \cite{HHK}. In order to avoid jumps in $V^2$ that are not related to the lateral limits, it is customary to work with $V^2_{rc}$, the subspace of $V^p$ of functions that are right-continuous, or  $V^2_{lc}$, the subspace of $V^p$ of functions that are left-continuous; in addition a normalization at one of the end-points is usually present: in $V^2_{rc}$ one uses $\lim_{t \rightarrow a+} v(t)=0$ (in $L^2(\R^n)$), while in $V^2_{lc}$ one uses $\lim_{t \rightarrow b-} v(t)=0$. Such a normalization allows to skip the use of $\|v\|_{L^\infty_t L^2_x}$ in the $\|\cdot \|_{V^p}$ and simply use only $|v|_{V^p}$. 

In practice, in the context of solution to nonlinear PDEs, we will have apriori knowledge of the continuity in time of solutions, while the normalization at $-\infty$ or $+\infty$ is unnecessary and incompatible with prescribing an initial data at a specific time $t_0$, which itself acts as a normalization. However one can reach any desired normalization by subtracting a constant in time function. 

Next we continue with basic properties of these spaces. We record the embeddings
\begin{equation}
U^p \subset V^p \subset U^q, \quad  1 \leq p < q < \infty;
\end{equation}
the first embedding is straightforward for atoms, hence for the whole $U^p$, while the second requires a bit more work, see for instance \cite{HHK} for a complete proof. We have found Remark 4.2 in \cite{CaHe} to be helpful in providing some more intuition into this spaces. 

It is important to understand how the $U^p$ and $V^p$ structures interact with decompositions which have good orthogonality properties; we use the following result from Proposition 4.3 in \cite{CaHe}.
\begin{proposition} \label{Pref}
Let $M_1,M_2 \in (0,+\infty)$. For $k \in \N$, let $T_k : L^2(\R^n) \rightarrow L^2(\R^n)$ be a linear and bounded operator (acting spatially) such that
\[
M_1^{-1} \| \sum_{k \in \N}  T_k f \|_{L^2} \leq \left( \sum_{k \in \N}  \|T_k f \|^2_{L^2} \right)^\frac12 \leq M_2 \|f\|_{L^2}, \quad \forall f \in L^2.
\]
If $1 \leq p \leq 2$, then for all $u \in U^p$ we have the bound
\[
\left( \sum_{k \in \N}  \|T_k u \|^2_{U^p} \right)^\frac12 \leq M_2 \|u\|_{U^p}.
\]
If $ p \geq 2$, then for all $v \in V^p$ we have the bound
\[
 \| \sum_{k \in \N}  T_k v \|_{V^p}  \leq M_1 \left( \sum_{k \in \N}  \|T_k v \|^2_{V^p} \right)^\frac12. 
\]
\end{proposition}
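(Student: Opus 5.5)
The plan is to prove both bounds first for atoms (respectively for step functions of bounded $p$-variation) and then pass to general elements by the atomic structure and by duality.

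\emph{$U^p$ bound, $1\le p\le 2$.} By the definition of the atomic norm and the triangle inequality in $\ell^2(\N;U^p)$, it suffices to treat a single atom $a=\sum_{j=1}^J \mathds 1_{[t_{j-1},t_j)}\phi_j$ with $\sum_j\|\phi_j\|_{L^2}^p=1$. Since the $T_k$ act only in space, $T_k a=\sum_j \mathds 1_{[t_{j-1},t_j)}T_k\phi_j$ is again a step function on the same partition. Set $\mu_k:=(\sum_j\|T_k\phi_j\|_{L^2}^p)^{1/p}$. Then $T_k a=\mu_k a_k$ with $a_k$ an atom (or $T_ka=0$), so $\|T_ka\|_{U^p}\le \mu_k$. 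It remains to show $\sum_k\mu_k^2\le M_2^2$. Because $p/2\le 1$, Minkowski's inequality in $\ell^{2/p}$ (reverse direction) gives
\[
\Big(\sum_k \Big(\sum_j \|T_k\phi_j\|^p\Big)^{2/p}\Big)^{p/2}\le \sum_j\Big(\sum_k\|T_k\phi_j\|^2\Big)^{p/2}\le M_2^p\sum_j\|\phi_j\|^p=M_2^p.
\]
For $u=\sum_i\lambda_i a_i$ we then obtain $(\sum_k\|T_ku\|_{U^p}^2)^{1/2}\le\sum_i|\lambda_i|(\sum_k\|T_ka_i\|_{U^p}^2)^{1/2}\le M_2\sum|\lambda_i|$. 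Taking the infimum over representations finishes this part.

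\emph{$V^p$ bound, $p\ge2$.} I would argue directly from the definition. Fix a partition $(t_j)$ and write $w_j^k:=T_kv(t_{j+1})-T_kv(t_j)$. The lower frame inequality (which, applied to $f$ replaced by suitable vectors, is really a statement about the synthesis map $\sum_kT_k$) should be used as $\|\sum_k T_k g_k\|\le M_1(\sum_k\|T_kg_k\|^2)^{1/2}$. Here one must check that the hypothesis, stated for a common $f$, covers this case. Since $T_kv(t)$ all come from the same $v(t)$, the left side is $\sum_kT_k$ applied to the single function $v(t_{j+1})-v(t_j)$, so the hypothesis applies with $f=v(t_{j+1})-v(t_j)$ and gives
\[
\Big\|\sum_k w^k_j\Big\|_{L^2}\le M_1\Big(\sum_k\|w^k_j\|^2\Big)^{1/2}.
\]
Then, using $p/2\ge1$ and Minkowski in $\ell^{p/2}$,
\[
\Big(\sum_j\Big(\sum_k\|w_j^k\|^2\Big)^{p/2}\Big)^{2/p}\le\sum_k\Big(\sum_j\|w_j^k\|^p\Big)^{2/p}\le\sum_k|T_kv|_{V^p}^2.
\]
Taking the supremum over partitions bounds $|\sum_kT_kv|_{V^p}$. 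The $L^\infty_tL^2_x$ part is the same pointwise estimate at each fixed $t$, and the two pieces combine, with at most a harmless adjustment of constants, via $a+b\le$ the $\ell^2$ sum of the full norms.

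The main obstacle I anticipate is this combination of the seminorm and the sup parts into exactly the constant $M_1$. In the normalized setting ($V^p_{rc}$ with a limit $0$ at an endpoint) only $|\cdot|_{V^p}$ is needed, and the estimate is then exact. Otherwise I would use the fact that each norm is $\ge$ its seminorm part and apply Minkowski jointly to the pair (partition increments, $\sup_t$ values).
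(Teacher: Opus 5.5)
Your argument is correct, and it is the standard proof of this statement. The paper does not prove it but quotes it from Proposition 4.3 of \cite{CaHe}, and the argument there is the same as yours: reduce $U^p$ to atoms and use Minkowski in $\ell^{2/p}$; for $V^p$, apply the hypothesis to each increment $f=v(t_{j+1})-v(t_j)$ and then Minkowski in $\ell^{p/2}$.

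Your worry about the exact constant is justified, and the joint-Minkowski fallback cannot remove it. With the paper's norm $|v|_{V^p}+\|v\|_{L^\infty_tL^2_x}$, take complementary orthogonal projections $T_1,T_2$ (so $M_1=M_2=1$, and $T_k=0$ for $k\geq 3$). Let $v=\phi_2+\mathds{1}_{[0,\infty)}(t)\phi_1$, with $\phi_i$ a unit vector in the range of $T_i$. Then $\|v\|_{V^p}=1+\sqrt2$, while $\|T_1v\|_{V^p}=2$ and $\|T_2v\|_{V^p}=1$. Hence $\|v\|_{V^p}>\sqrt5=(\|T_1v\|_{V^p}^2+\|T_2v\|_{V^p}^2)^{1/2}$, so for that norm a constant loss such as $\sqrt2\,M_1$ is unavoidable.

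The exact constant $M_1$ does hold when the $V^p$ norm is a single $p$-variation that admits the normalizing endpoint value $0$ as a partition point. In that case the $\sup_t$ term is just one more increment, and your partition argument already covers it.
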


The (distributional) time derivative of functions in $U^p,V^p$ plays an important role in the theory. We define
\begin{equation} \label{DU}
DU^p =\{ \partial_t u: u \in U^p \}, \quad DV^p =\{ \partial_t v:  v \in V^p \},
\end{equation}
where $\partial_t$ is meant to be a distributional derivative. An important ingredient is the following formal duality (which is not to be read verbatim below)
\[
(DU^p)'=V^q, \quad (DV^p)'=U^q, \quad 1 < p,q< \infty, \frac1p+\frac1q=1.  
\]
By now this duality is completely understood, particularly in light of the work of Koch and Tataru in \cite{KT2}. We use the following formalization from \cite{KT2}, Proposition B.17; see also \cite{CaHe}, Theorem 4.4 and 4.6. 

\begin{theorem} \label{DualUV} If $1 < p,q< \infty, \frac1p+\frac1q=1$, the spaces $DU^p$ and $DV^p$ can be characterized as the space of distributions for which the following norms are finite:
\begin{equation} \label{TDU}
\|f\|_{DU^p}=\sup_{ \phi \in C_0^\infty: \| \phi\|_{V^q} \leq 1}  \left| \int \la  f , \phi \ra_{L^2} dt \right|,
\end{equation}
\begin{equation} \label{TDV}
\|f\|_{DV^p}=\sup_{ \phi \in C_0^\infty: \| \phi\|_{U^q} \leq 1}  \left| \int \la  f , \phi \ra_{L^2} dt \right|,
\end{equation}
\end{theorem}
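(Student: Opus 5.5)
The plan is to prove both characterizations in the same way. First, an integration-by-parts (summation-by-parts) identity gives the bound ``$\le$'' directly on atoms, respectively on $V^p$ functions. Second, the reverse inequality comes from the known duality between $U^p$ and $V^q$, realized through the Stieltjes bilinear form
\[
B(u,v)=\sup_\tau \text{-limit of } \sum_{k}\la u(t_k)-u(t_{k-1}), v(t_k)\ra_{L^2},
\]
which represents $\int \la \partial_t u, v\ra\,dt$. I give the details for \eqref{TDU} and then indicate the changes for \eqref{TDV}.

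\emph{Upper bound in \eqref{TDU}.} Let $f=\partial_t u$ with $u\in U^p$. By the atomic definition it suffices to take $u=a=\sum_k \mathds{1}_{[t_{k-1},t_k)}\phi_k$ an atom. Then, in the sense of distributions, $\partial_t a=\sum_k \delta_{t_{k-1}}(\phi_k-\phi_{k-1})$, with the convention $\phi_0=0$. For $\phi\in C_0^\infty$ a summation by parts gives
\[
\int\la \partial_t a,\phi\ra dt=\sum_k \la \phi_k, \phi(t_{k-1})-\phi(t_k)\ra ,
\]
where the last term uses $\phi(t_K)$, or $0$ if $t_K=+\infty$. Hölder in $k$ bounds this by $(\sum\|\phi_k\|^p)^{1/p}\,|\phi|_{V^q}\le \|\phi\|_{V^q}$. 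Summing over atoms gives the supremum $\le \|u\|_{U^p}$.

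\emph{Lower bound in \eqref{TDU}.} Let $A$ denote the supremum in \eqref{TDU}. I first construct a primitive. For $g\in L^2$ and $t$ fixed, define $\la u(t),g\ra := \lim_{\epsilon\to 0}\int\la f, \psi_{\epsilon,t}\, g\ra\,dt'$, where $\psi_{\epsilon,t}$ is a smooth approximation of $\mathds{1}_{(-\infty,t]}$ that is monotone in $t'$. Since $\|\psi_{\epsilon,t} g\|_{V^q}\les \|g\|_{L^2}$ uniformly, this gives $\|u\|_{L^\infty L^2}\les A$. Existence of the limit and right-continuity are handled as in the ruled-function theory: the differences of two such test functions have uniformly bounded $V^q$ norm and shrinking support. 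One checks that $\partial_t u=f$ distributionally and $u(-\infty)=0$.

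Next I use the duality $(U^p)^*=V^q$ through $B$, see \cite{HHK}. This gives $\|u\|_{U^p}\approx\sup\{B(u,v): v \text{ a step function}, \|v\|_{V^q}\le1\}$. The main obstacle is that $C_0^\infty$ is not norm-dense in $V^q$. I will circumvent this by mollifying the step function $v$ in time, $v_\epsilon=v*\eta_\epsilon$, and truncating with a smooth cutoff. Mollification is an average of translates, and $|\cdot|_{V^q}$ is convex and translation invariant, so $\|v_\epsilon\|_{V^q}\le\|v\|_{V^q}$. The cutoff costs only a harmless constant. Then $\int\la f,v_\epsilon\ra=B(u,v_\epsilon)\to B(u,v)$ as $\epsilon\to0$, which follows from the right-continuity of $u$ and the explicit jump structure of $v$ (a finite sum). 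Hence $\|u\|_{U^p}\les A$.

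\emph{The case \eqref{TDV}.} The argument is identical, with the roles swapped. For the upper bound, $\int \la\partial_t v,\phi\ra$ with $\phi$ smooth is a Stieltjes integral. Decompose $\phi\in U^q$ into atoms and bound the pairing against each atom by Hölder, using $|v|_{V^p}$; this gives $\le \|v\|_{V^p}\|\phi\|_{U^q}$. For the converse, build the primitive $v$ as above and use the dual characterization $\|v\|_{V^p}\approx\sup\{B(\phi,v): \phi \text{ a } U^q \text{ atom}\}$. Approximate atoms by mollified, truncated atoms, whose $U^q$ norm is controlled because mollification is an average of translates of atoms. Passage to the limit uses the lateral limits of $v$. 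Throughout, I expect the delicate point to be the approximation/limit step, i.e.\ ensuring that smooth test functions suffice despite the lack of density. The algebraic Hölder estimates are routine.
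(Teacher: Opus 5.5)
Your upper bounds are fine. So is the converse for $DV^p$, in outline: after summation by parts, and up to the endpoint normalization, the supremum of $B(\phi,v)$ over $U^q$ atoms equals $\sup_\tau\big(\sum_k\|v(t_k)-v(t_{k-1})\|_{L^2}^p\big)^{1/p}$. That identity holds for an arbitrary function $v$, so you need no a priori membership of the primitive in $V^p$.

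The genuine gap is the converse for $DU^p$. The duality $(U^p)^*=V^q$ gives $\|u\|_{U^p}=\sup_{\|v\|_{V^q}\le 1}|B(u,v)|$ only for $u$ already known to lie in $U^p$; it is a Hahn--Banach statement about elements of $U^p$. Your primitive is only known to be bounded and, after testing against smoothed step functions, to satisfy $|u|_{V^p}\les A$. But $V^p\not\subset U^p$. Since the $U^p$ norm is an infimum over atomic decompositions, finiteness of $\sup_v|B(u,v)|$ does not by itself produce any decomposition of $u$. So the sentence ``this gives $\|u\|_{U^p}\approx\sup\{\dots\}$'' assumes what is to be proved. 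The non-density of $C_0^\infty$ in $V^q$, which you single out as the main obstacle, is secondary, and your mollification does handle it.

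What is missing is a closedness property of the $U^p$ ball. You can reduce to it as follows. Sample $u$ on partitions $\tau$, at right endpoints, so that $u_\tau\to u$ pointwise by right-continuity. The $u_\tau$ are step functions in $U^p$, so duality is legitimate for them. Moreover, $B(u_\tau,v)$ is the limit of $\int\la f,\tilde v_\epsilon\ra\,dt$, where $\tilde v_\epsilon$ are mollified truncations of a step function $\tilde v$ sampled from $v$ with $\|\tilde v\|_{V^q}\le\|v\|_{V^q}$. Hence $\sup_\tau\|u_\tau\|_{U^p}\les A$.

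To conclude you then need a Fatou-type statement. Namely: if $\|u_n\|_{U^p}\le M$, $u_n(t)\rightharpoonup u(t)$ in $L^2$ for every $t$, and $u$ is right-continuous with $u(-\infty)=0$, then $u\in U^p$ and $\|u\|_{U^p}\les M$. The normalization is necessary: $\mathds{1}_{[-n,\infty)}g\to g$ pointwise, and the constant $g$ is not in $U^p$. This property is essentially equivalent to $U^p$ being the dual of the $V^q$-closure of $C_0^\infty$, so it is the real content of \eqref{TDU}. It does not follow from $(U^p)^*=V^q$. It is the substance of the results the paper imports from \cite{KT2} and \cite{CaHe}; the paper itself gives no proof.

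Two smaller points. First, $\psi_{\epsilon,t}g$ is not compactly supported, so you need a left cutoff and then the existence of $u(-\infty)$, which the $V^p$ bound provides. Second, a symmetric mollifier produces $\tfrac12\big(u(s^-)+u(s^+)\big)$ at a jump $s$ of $v$, so use a one-sided one.
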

 
\begin{r1}
In practice, the distributions $f$ that we test will have additional properties such as $f \in L^1_{t, loc} L^2_x$, and, as a consequence, the associated $u$ with the property $f = \partial_t u$ comes with a priori continuity properties $u \in C_t L^2_x$. In order to uniquely identify $u$, either in $U^p$ or $V^p$, one has to impose a normalization, such as the standard one in $U^p$, $\lim_{t \rightarrow a+} u(t)=0$, or the standard one in $V^p$, $\lim_{t \rightarrow b-} u(t)=0$. 
\end{r1}

For $s \in \R$ and $1 \leq p,q \leq +\infty$, we let 
\[
\|f\|_{\dot B^s_{p.q}}=\big\|\big(2^{sj}\|P_{j}(\partial_t)
f\|_{L^p(\R: L^2(\R^n))}\big)_{j \in \Z }\big\|_{\ell^q}. 
\]
We are interested in the particular case $p=2$ and record the following embeddings
\begin{equation} \label{BUV}
\dot B^{\frac12}_{2,1} \subset U^2 \subset V^2 \subset \dot B^{\frac12}_{2,\infty}.
\end{equation}
These embeddings are well known: see for instance $(2.5)$ in \cite{KT1} in the context of the Schr\"odinger flow (which is what we need in this paper) or \cite[Proposition 2.14]{HHK} for a more general version (more general $p$). 

We note that the (time) frequency cut-offs are bounded on the $U^p,V^p$ spaces, that is $P_{\geq k}(D_t), P_{< k}(D_t)$ are bounded operators on $U^p,V^p$; this is based on the fact that the kernel of $P_{< k}(D_t)$ is an $L^1_t$ function. For complete arguments see for instance \cite[Proposition 2.18]{HHK} or \cite[Lemma 4.17]{KTV}.

We conclude this section with a result highlighting a new structural property of high modulation components of functions in $U^2$; to our best knowledge this is new in the literature. 

\begin{lemma} \label{U2dec}
Assume that $u \in U^2$. Given $m \in \Z$ assume $\{I_l\}_{l \in \Z}$ is a set of intervals in $\R$ of length $\approx 2^{-m}$ with mutually disjoint interior. Then the following holds true
\begin{equation} \label{U2IX}
\sum_{l} \|\mathds{1}_{I_l}(t)  P_{\geq m}(D_t) u \|_{U^2}^2 \les \|u\|_{U^2}^2.  
\end{equation}
\end{lemma}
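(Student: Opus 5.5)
The plan is to reduce \eqref{U2IX} to $U^2$ atoms and then analyse $P_{\geq m}(D_t)$ acting on a step function, jump by jump, grouping jumps according to the intervals $I_l$.

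\emph{Step 1: reduction to atoms.} The left-hand side of \eqref{U2IX} is the square of $\| (\|\mathds{1}_{I_l} P_{\geq m}(D_t) u\|_{U^2})_l \|_{\ell^2}$, and this quantity is a seminorm in $u$. If $u=\sum_j \lambda_j a_j$, the triangle inequality in $\ell^2_l(U^2)$ therefore reduces \eqref{U2IX} to the bound $\lesssim 1$ for a single atom
\[
a=\sum_{k} \mathds{1}_{[t_{k-1},t_k)}\phi_k, \qquad \sum_k\|\phi_k\|_{L^2}^2=1 .
\]
We use that $P_{\geq m}(D_t)$ is bounded on $U^2$ (the kernel of $P_{<m}(D_t)$ is in $L^1_t$). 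We also use that restriction to an interval is bounded from $U^2$ to $\tilde U^2(I)$, with norms behaving like \eqref{U2int}. Finally, since $U^1\subset U^2$, a function of the form $g(t)\psi$ with $g$ scalar satisfies
\[
\|\mathds{1}_I\, g\,\psi\|_{U^2}\lesssim \big(\|g\|_{L^\infty(I)}+\mathrm{TV}_I(g)\big)\|\psi\|_{L^2}.
\]

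\emph{Step 2: grouping the jumps.} Rescaling, the intervals $I_l$ have length $\approx 2^{-m}$. Let $A_{l'}$ be the set of indices $k$ with $t_k\in I_{l'}$. Write $a=\sum_{l'} a_{l'}$, where $a_{l'}$ is the step function collecting exactly the jumps located in $I_{l'}$:
\begin{itemize}
\item $a_{l'}=0$ to the left of $I_{l'}$;
\item $a_{l'}=a-c_{l'}$ inside $I_{l'}$, where $c_{l'}$ is the value of $a$ just before $I_{l'}$;
\item $a_{l'}$ equals the constant total jump $\psi_{l'}$ to the right of $I_{l'}$.
\end{itemize}
A telescoping argument gives $\|a_{l'}\|_{U^2}^2+\|\psi_{l'}\|_{L^2}^2\lesssim \beta_{l'}^2:=\sum_{k\in \tilde A_{l'}}\|\phi_k\|_{L^2}^2$, where $\tilde A_{l'}$ is $A_{l'}$ together with the one step preceding it. Each $k$ belongs to at most two of the sets $\tilde A_{l'}$, so $\sum_{l'}\beta_{l'}^2\lesssim 1$. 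The point of the grouping is that the number of jumps inside a single $I_l$ is unbounded, so treating jumps individually would break the Schur-type summation below.

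\emph{Step 3: near and far interactions.} We aim for
\[
\|\mathds{1}_{I_l}P_{\geq m}(D_t)a_{l'}\|_{U^2}\lesssim \langle l-l'\rangle^{-2}\beta_{l'}.
\]
\begin{itemize}
\item \emph{Near case, $|l-l'|\le 3$.} This follows from Step 1: boundedness of $P_{\geq m}(D_t)$ on $U^2$, restriction to $I_l$, and $\|a_{l'}\|_{U^2}\lesssim\beta_{l'}$.
\item \emph{Far case, $|l-l'|>3$.} Write $a_{l'}=\psi_{l'}H(t-t^*)+r_{l'}$, where $H$ is the Heaviside function, $t^*$ is the right endpoint of $I_{l'}$, and $r_{l'}$ is supported in $I_{l'}$ with $\|r_{l'}\|_{L^\infty_tL^2_x}\lesssim\beta_{l'}$. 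On $I_l$ we have $P_{\geq m}(D_t)=I-P_{<m}(D_t)$, and $\mathrm{Id}$ applied to $r_{l'}$ vanishes there.
 \begin{itemize}
 \item For the remainder, $P_{<m}(D_t)r_{l'}$ restricted to $I_l$ is a smooth function at scale $2^{-m}$. Its $L^\infty_tL^2_x$ norm and its total variation on $I_l$ are both $O(\langle l-l'\rangle^{-N}\beta_{l'})$, by the Schwartz decay of the kernel.
 \item For the Heaviside part, $P_{\geq m}(D_t)H(\cdot-t^*)=h(2^m(\cdot-t^*))$ with $h=P_{\geq0}(D_t)H$. The function $h$ is smooth away from $0$, and $h$ and $h'$ decay rapidly on both sides; this is because $H-P_{<0}(D_t)H$ tends to $0$ at $\pm\infty$.
 \end{itemize}
 The sup+TV bound from Step 1 then gives the claim.
\end{itemize}

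\emph{Step 4: Schur summation.} Summing over $l'$ and applying Schur's test with the kernel $\langle l-l'\rangle^{-2}$ gives
\[
\sum_l\|\mathds{1}_{I_l}P_{\geq m}(D_t)a\|_{U^2}^2\lesssim\sum_{l'}\beta_{l'}^2\lesssim1 .
\]

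\emph{Expected main obstacle.} I expect the main difficulty to be Step 2: constructing the group decomposition so that each group has $U^2$ norm controlled by its \emph{local} $\ell^2$ mass $\beta_{l'}$. One must also make sure that the large constant values $\psi_{l'}$ carried to the right do not accumulate. They do not: they are killed by $P_{\geq m}$ except for a rapidly decaying tail near $t^*$, which is exactly what the Heaviside analysis in Step 3 captures. A naive Littlewood--Paley approach, using $\dot B^{\frac12}_{2,1}\subset U^2$ piecewise in $j\ge m$ together with $V^2\subset\dot B^{\frac12}_{2,\infty}$, loses a logarithm in the $j$-summation. This is why I would avoid it in favour of the atomic argument above.
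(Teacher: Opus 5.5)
Your argument is correct, but it is organized differently from the paper's proof.

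\textbf{How the paper proceeds.} The paper splits the steps of an atom by length.
\begin{itemize}
\item Steps with $|I_k|\le 2^{-m}$ are grouped according to the $I_l$ they meet, and each group gives a local $U^2$ atom.
\item Each long step is treated on its own, using the decay of $P_{\geq m}(D_t)\mathds{1}_{I_k}$ away from the two endpoints of $I_k$. This is done after cutting with $\tilde\chi_{I_l}$, with the boundary pieces treated like short steps.
\item The argument ends with a Schur test over pairs of long steps, which relies on the $2^{-m}$-separation of their endpoints.
\end{itemize}

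\textbf{How your route differs.} You group the \emph{jumps} by the interval that contains them. Each group is written as a Heaviside tail $\psi_{l'}H(\cdot-t^*)$ plus a remainder supported in $I_{l'}$. This buys you the following:
\begin{itemize}
\item every step, however long, contributes only through its two endpoint jumps;
\item no short/long split is needed, and there are no special boundary pieces;
\item the final Schur/Young summation runs over the fixed lattice $l-l'$ rather than over step endpoints;
\item it makes transparent why long steps do not accumulate.
\end{itemize}
The analytic inputs are the same in both proofs: boundedness of $P_{\geq m}(D_t)$ and of time restriction on $U^2$, rapid decay of the kernel of $P_{<m}(D_t)$, and a variation bound for smooth pieces. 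Your function $h=P_{\geq 0}(D_t)H$ is exactly the one-endpoint version of the paper's pointwise bound for $P_{\geq m}\mathds{1}_{I_k}$. The paper's version has the small convenience that each long-step piece $\mathds{1}_{I_k}\phi_k$ vanishes at $\pm\infty$, so no non-decaying Heaviside objects appear.

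\textbf{One justification needs fixing.} The bound
\[
\|\mathds{1}_I g\psi\|_{U^2}\lesssim\big(\|g\|_{L^\infty(I)}+\mathrm{TV}_I(g)\big)\|\psi\|_{L^2}
\]
is true, but it does not follow from $U^1\subset U^2$. When $g$ is continuous and nonconstant on $I$, $\mathds{1}_I g\psi$ is not in $U^1$ at all. The reason is that every element of $U^1$ is a $V^1$-limit of step functions, so its distributional time derivative is purely atomic. You apply this bound precisely to smooth $g$, namely the tails $h(2^m(\cdot-t^*))$ and $P_{<m}(D_t)r_{l'}$ on $I_l$. So you should invoke the embedding $V^1\subset U^2$ instead, as the paper does. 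With that substitution the proof goes through as written.
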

The most practical example of a set of intervals $I_l$ is $\mathcal{I}_l=\{ 2^{-m}l, 2^{-m}(l+1): l \in \Z \}$. 
As stated the result above is implicitly made in terms of $U^2(\R)$, but it can be easily adjusted to $\tilde U^2(I_l)$.

\begin{proof} Throughout this proof we will repeatedly use the fact that $U^2$ is stable under standard frequency projectors. We drop the use of $P_{\geq m}(D_t)$ and simply assume that $u$ satisfies $P_{\geq m-10}(D_t)u=u$. It suffices to establish the above property for atoms in $U^2$, thus consider 
\[
a(t)=\sum_{k=1}^K \mathds{1}_{[t_{k-1},t_{k})}(t) \phi_k , \quad \sum_{k=1}^K\|\phi_k\|_{L^2(\R^n)}^2=1.
\]
We organize the intervals  $I_k = [t_{k-1},t_{k})$ based on how their length compares with $2^{-m}$ and split $a(t)$ as follows
\[
a=a_1+a_2, \quad a_1(t)=\sum_{k: |I_k| \leq 2^{-m}} \mathds{1}_{I_k}(t) \phi_k, \quad a_2(t)=\sum_{k: |I_k| > 2^{-m}} \mathds{1}_{I_k}(t) \phi_k. 
\]
To regain the frequency localization we instead work with $P_{\geq m} a_1$ and $P_{\geq m} a_2$.

The claim for $a_1(t)=\sum_{k: |I_k| \leq 2^{-m}} \mathds{1}_{I_k}(t) \phi_k$ is the easier one. We organize
\[
a_{1,l}: = \sum_{k: |I_k| \leq 2^{-m}, I_k \cap I_l \ne \emptyset} \mathds{1}_{I_k}(t) \phi_k,
\]
note that $a_{1,l} \chi_{I_l}$ is an $U^2$ atom on the interval $I_l$, and use the fact that an interval $I_k$ can intersect at most two intervals $I_l$ to obtain the desired conclusion. This is stable when applying $P_{\geq m}$ since $P_{\geq m}=I - P_{<m}$ and the kernel $K_{<m}$ of $P_{<m}$ is an $L^1$ normalized  bump function adapted to the interval $[-2^{-m},2^{-m}]$. 

Thus we are left with establishing the property for $a_2(t)$ and here it suffices to analyze a single interval $I_k$ given that $|I_k| > 2^{-m}$ since we have $l^2$ summability with respect to these intervals from the original structure.  The difficulty here comes from the fact that an interval $I_k$ can intersect a large number of intervals $I_l$, and the estimate should be independent of this number. We further write
\[
\mathds{1}_{I_k}(t) = \sum_l \tilde \chi_{I_l}(t) \mathds{1}_{I_k}(t), 
\]
where it suffices to restrict the sum over those $l$'s with the property that the double of the interval $I_l$ intersects $I_k$. Next we note that there are only finitely many $l$'s for which the identity $\tilde \chi_{I_l}(t) \mathds{1}_{I_k} = \tilde \chi_{I_l}(t) $ fails, essentially those those intervals $I_l$ whose double contains $t_{k-1}$ or $t_k$. These intervals can be treated just as those in $a_1$. 

For the other intervals where $\tilde \chi_{I_l}(t) \mathds{1}_{I_k} = \tilde \chi_{I_l}(t) $ holds true we record the decay estimate $|\mathcal{F} \tilde \chi_{I_l} (\tau)| \les 2^{-m} \la 2^{-m} |\tau| \ra ^{-N})$. Next we write 
\[
P_{\geq m} \mathds{1}_{I_k}(t)= \mathds{1}_{I_k}(t) - P_{< m} \mathds{1}_{I_k}(t),
\]
and note that $P_{< m}= K_{<m} \ast $ performs an averaging (highly) concentrated at scale $2^{-m}$, and its kernel satisfies
\[
| (2^{-m} \partial_t)^\alpha K_{<m}(t)| \les_\alpha 2^m \la 2^m t \ra^{-N}, \forall \alpha \geq 0, \quad \int K_m(t) dt =1.   
\]
From these we obtain the following estimate
\[
|(2^{-m} \partial_t)^\alpha P_{\geq m} \mathds{1}_{I_k}(t)| \les_\alpha \la 2^m(d(t, I_k) + d(t,I_k^c)) \ra^{-N},
\]
which allows us to estimate
\[
\|  \tilde \chi_{I_l} P_{\geq m} \mathds{1}_{I_k} \|_{V^1} \les |I_l| \| \partial_t ( \tilde \chi_{I_l} P_{\geq m} \mathds{1}_{I_k}) \|_{L^\infty} \les  \la 2^{m} d(I_l,I_k^c) \ra^{-N}.
\]
Using the inclusion $V^1 \subset U^2$ we conclude with
\[
\| \tilde \chi_{I_l} P_{\geq m} \mathds{1}_{I_k}  \phi_k \|_{U^2} \les  \la 2^{m} d(I_l,I_k^c) \ra^{-N} \| \phi_k\|_{L^2}. 
\]
Summing with respect to the intervals $I_k$ gives the following
\begin{equation} \label{aux2}
\| \tilde \chi_{I_l} P_{\geq m} (\sum_k \mathds{1}_{I_k}  \phi_k) \|_{U^2} \les  \sum_k \la 2^{m} d(I_l,I_k^c) \ra^{-N} \| \phi_k\|_{L^2},
\end{equation}
from which we obtain 
\[
\begin{split}
\sum_l \| \tilde \chi_{I_l} P_{\geq m} (\sum_k \mathds{1}_{I_k}  \phi_k) \|^2_{U^2} & \les  \sum_l \left(\sum_k \la 2^{m} d(I_l,I_k^c) \ra^{-N} \| \phi_k\|_{L^2} \right)^2 \\
& \les \sum_l \sum_{k_1,k_2} \la 2^{m} d(I_l,I_{k_1}^c) \ra^{-N} \la 2^{m} d(I_l,I_{k_2}^c) \ra^{-N} \| \phi_{k_1}\|_{L^2}  \| \phi_{k_2}\|_{L^2}  \\
& \les \sum_{k_1,k_2} \la 2^{m} d(I_{k_1},I_{k_2}) \ra^{-N}  \| \phi_{k_1}\|_{L^2}  \| \phi_{k_2}\|_{L^2}  \\
& \les \sum_{k}  \| \phi_{k}\|^2_{L^2}  \les  \|a_2\|_{U^2}^2. 
\end{split}
\]
This concludes the proof.

\end{proof}

We remark the following consequence of \eqref{aux2}. If $I_0=[t_0,+\infty)$, the following holds true
\begin{equation} \label{aux3}
\sum_{I \in \mathcal{I}_{-m}}  \la 2^{m} d(I,t_0)\ra^N \| \mathds{1}_I  P_{\geq m}   \mathds{1}_{I_0} \phi \|_{U^2} \les_N \|\phi\|_{L^2}.
\end{equation}

\subsection{$U^p,V^p$ adapted to the Schr\"odinger flow.}

Consider the  linear Schr\"odinger equation 
\begin{equation}
(i \partial_t + \Delta) \psi = 0, \quad  \psi(0,x)= \psi_0(x),
\end{equation}
where $\psi: \R \times \R^n \rightarrow \C$. The solution $\psi(t)=e^{it\Delta} \psi_0$ satisfies $e^{-it \Delta} \psi(t)=\psi_0 = \mathds{1}_{\R}(t) \psi_0(x)$ is a constant in time function, just like an atom in $U^p$ (to be more precise we would have to consider the solution on some interval $[T,+\infty)$ for some $T\in \R$ to obtain an atom with the standard normalization). Such a nice representation is lost when we seek solutions for a nonlinear PDE, but instead a more realistic expectation is that $e^{-it \Delta} \psi(t)$ belongs to $U^2$ or $V^2$. This motivates the definition 
\[
U^p_{\Delta}= \{ u: e^{-it \Delta} u \in U^p \}, \quad V^p_{\Delta}= \{ u: e^{-it \Delta} u \in V^p \}. 
\]
Let us highlight some of the key properties of these spaces. We note the straightforward property $e^{it \Delta} (\chi_I \phi) = \chi_I  e^{it \Delta}  \phi$, thus the atoms in $U^2_\Delta$ are 
\[
a(t)=\sum_{k=1}^K \mathds{1}_{[t_{k-1},t_{k})}(t) e^{it \Delta}  \phi_k , \quad \sum_{k=1}^K\|\phi_k\|_{L^2(\R^n)}^p=1.
\]
We recall the standard Strichartz estimate for the linear equation
\[
\|e^{it\Delta} f \|_{L^q_t L^r_x} \les \|f\|_{L^2},
\]
for any pair $(q,r)$ satisfying $2 < q \leq \infty$ and $\frac{2}q + \frac{n}r=\frac{n}2$; we do not need the end-point in this paper. 
 The following result is standard
\begin{equation} \label{V2Str}
\| u \|_{L^q_t L^r_x} \les \|u\|_{V^2_\Delta}. 
\end{equation}
This is a consequence of the fact that $V^2_\Delta \subset U^q_\Delta$ (since we chose $q>2$) and the Strichartz estimate is straightforward for atoms in $U^q_\Delta$. 

Let us recall the definition of the standard $X^{0,b,q}$ spaces in this context. We begin with the modulation localization operator. For $j \in \Z$ we define
\[
\mathcal{F}[Q_{j}f](\tau,\xi)=\rho_j(\tau +  \xi^2)\mathcal{F}f(\tau,\xi),
\]
where we recall that $\rho_j$ is the function localizing $|\tau + \xi^2| \approx 2^j$, see Section \ref{subsect:not} for details.

For $1\leq p\leq \infty$, $b \in \R$, we define the following semi-norm
\[
\|f\|_{\dot{X}^{0,b,p}}=\big\|\big(2^{bj}\|Q_{j}
f\|_{L^2}\big)_{j \in \Z }\big\|_{\ell^p}, 
\]
with the standard modification when $p=\infty$. We note that $b$ quantifies the time regularity after "pulling back" the linear flow, that is the time regularity of the object $e^{-it \Delta} u$. Indeed 
\[
\|f\|_{\dot{X}^{0,b,p}} \approx \|  e^{-it \Delta} f \|_{(\dot B^{b}_{2,p})_t L^2_x},
\]
where $(B^{b}_{2,p})_t L^2_x$ stands for the space of functions $ g: \R \times \R^n$ with $\|g(t, \cdot)\|_{L^2_x(\R^n)} \in \dot B^{b}_{2,p}$ (as a function of $t$) and with the associated semi-norm. 

The following embeddings are well known (see for instance $(2.5)$ in \cite{KT1}) :
\begin{equation} \label{UVX}
X^{0,\frac12,1} \subset U^2_\Delta \subset V^2_\Delta \subset X^{0,\frac12,\infty};
\end{equation}
these are a direct consequence of \eqref{BUV} and the definitions of these spaces. 

In the previous section we noted that $P_{\geq k}(D_t), P_{< k}(D_t)$ are bounded operators on $U^p,V^p$; as a consequence $Q_{\leq}, Q_{\geq}$ are bounded operators on  $V_\Delta^2,U_\Delta^2$.

Moreover, we also have
\begin{equation} \label{V2L2}
\| Q_{\geq k}(D_t) f \|_{L^2(\R \times \R^{n})} \les 2^{-\frac{k}2} \| f \|_{V^2_\Delta},
\end{equation}
which follows from \eqref{UVX} (the inclusion into $\dot X^{\frac12}_{2,\infty}$ part). 

In what follows we rely on the following simple identity
\[
e^{-it\Delta} Q_m ( e^{it \Delta} f )=   P_m (D_t) f;
\]
a similar identity holds true if $Q_m$ is replaced by $Q_{ \leq m}$ or $Q_{\geq m}$. As a consequence of 
 Lemma \ref{U2dec} we have the following result. 

\begin{lemma} \label{U2hm}
i) Assume that $u \in U^2_\Delta$ and $m \in \Z$.  Then we have the following 
\begin{equation} \label{U2DIX}
\sum_{I  \in \mathcal{I}_{-m} } \| \mathds{1}_I  Q_{\geq m} u \|_{U^2_\Delta}^2 \les \|u\|^2_{U^2_\Delta}.  
\end{equation}

ii) In addition to the setup in i) assume that $u$ is localized at frequency $2^k$. Then
\begin{equation} \label{U2IXhm}
\sum_{I \in \mathcal{I}_{-m}}  \sum_{q \in C_{-m+k}} \| \chi_q e^{-it\Delta} \mathds{1}_I Q_{\geq m} u \|_{U^2}^2 \les \|u\|^2_{U^2_\Delta}.  
\end{equation}

iii) If $I_0=[t_0,+\infty)$ then the following holds true
\begin{equation} \label{U2fd}
\sum_{I \in \mathcal{I}_{-m}}  \la 2^{m} d(I,t_0)\ra^N \| \mathds{1}_I P_{\geq m}   \mathds{1}_{I_0} e^{it\Delta} \phi \|_{U^2_\Delta} \les_N \|\phi\|_{L^2}.
\end{equation}

\end{lemma}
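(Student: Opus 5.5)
The plan is to pull everything back through the linear flow and reduce to Lemma \ref{U2dec} and \eqref{aux3}. By definition $\|w\|_{U^2_\Delta}=\|e^{-it\Delta}w\|_{U^2}$. Multiplication by $\mathds{1}_I(t)$ commutes with $e^{-it\Delta}$. The identity $e^{-it\Delta}Q_{\geq m}(e^{it\Delta}f)=P_{\geq m}(D_t)f$ lets us move $Q_{\geq m}$ across the flow.

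For part i) we set $f=e^{-it\Delta}u\in U^2$. Then $e^{-it\Delta}\mathds{1}_I Q_{\geq m}u=\mathds{1}_I P_{\geq m}(D_t)f$. The intervals of $\mathcal{I}_{-m}$ have length $2^{-m}$ and disjoint interiors, so \eqref{U2IX} gives \eqref{U2DIX} immediately.

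Part iii) is handled the same way. We have $e^{-it\Delta}\big(\mathds{1}_I P_{\geq m}(D_t)\mathds{1}_{I_0}e^{it\Delta}\phi\big)=\mathds{1}_I P_{\geq m}(D_t)\mathds{1}_{I_0}\phi$, because $P_{\geq m}(D_t)$ acts only in time and commutes with $e^{it\Delta}$. (Read via the $Q$ identity, the operator $P_{\geq m}$ in \eqref{U2fd} becomes the time projector after pull-back, which is what is needed.) So \eqref{U2fd} is exactly \eqref{aux3}.

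Part ii) needs a spatial refinement, which I would obtain in two steps.

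\textbf{Step 1: reduce to a fixed interval.} By part i) it suffices to prove, for each $I\in\mathcal{I}_{-m}$ and $w_I:=e^{-it\Delta}\mathds{1}_IQ_{\geq m}u$,
\[
\sum_{q\in C_{-m+k}}\|\chi_q w_I\|_{U^2}^2\les \|w_I\|_{U^2(I)}^2 .
\]
Summing over $I$ and applying \eqref{U2DIX} then finishes the proof.

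\textbf{Step 2: spatial decoupling on $I$.} The function $w_I$ is supported in $t\in I$, and $|I|=2^{-m}$. Since $u$ is localized at frequency $2^k$, so is $w_I$. On $I$, $e^{it\Delta}$ moves such data a distance $\les 2^k|I|=2^{k-m}$, the side length of the cubes in $C_{-m+k}$. The plan is to insert the recentered flow: with $t_I=c(I)$, write
\[
\chi_q w_I=\chi_q e^{-it_I\Delta}\,\big(e^{-i(t-t_I)\Delta}\mathds{1}_I Q_{\geq m}u\big).
\]
The factor $e^{-i(t-t_I)\Delta}$ restricted to $|t-t_I|\les 2^{-m}$ is, at frequency $2^k$, an operator whose kernel is concentrated in a ball of radius $\les 2^{k-m}$. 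It therefore almost commutes with the cube localization. I would make this precise with a smooth partition $\sum_q\tilde\chi_q$ and the kernel bound $|K_t(x)|\les 2^{nk}\la 2^{m-k}\cdot 2^k x\ra^{-N}$, valid uniformly for $|t|\les2^{-m}$ at frequency $2^k$. This bound gives almost-orthogonality in $q$ with Schwartz tails in $d(q,q')/2^{k-m}$. The almost-commutation is then fed into Proposition \ref{Pref} with $T_q$ the resulting spatial operators at each fixed time; they satisfy the square-function bound with $M_2\les1$. That yields
\[
\Big(\sum_q\|T_qv\|_{U^2}^2\Big)^{1/2}\les\|v\|_{U^2}
\]
for $v=e^{-it_I\Delta}w_I$, after conjugating by $e^{it_I\Delta}$, which is unitary and time independent, so harmless.

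\textbf{Main obstacle.} The difficulty is that $T_q$ must be a fixed spatial operator while $\chi_q$ applied to $e^{-it\Delta}$ varies with $t\in I$. I would handle this by expanding the error $\chi_q e^{-i(t-t_I)\Delta}-e^{-i(t-t_I)\Delta}\chi_q$ and treating it as a product of time-smooth multipliers on $I$. Such multipliers, whose $t$-derivatives scale like $2^m$, are bounded on $U^2(I)$ via $V^1\subset U^2$, exactly as in the proof of Lemma \ref{U2dec}. The tails must be summed in $q$ with weights $\la 2^{m-k}d(q,q')\ra^{-N}$, as in the final Schur-type computation of that proof.
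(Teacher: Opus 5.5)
Your skeleton is the paper's. Parts i) and iii) come from pulling back by $e^{-it\Delta}$, using $e^{-it\Delta}Q_{\geq m}e^{it\Delta}=P_{\geq m}(D_t)$, and quoting \eqref{U2IX}, respectively \eqref{aux3}. Part ii) comes from i) combined with Proposition~\ref{Pref}; the paper also mentions the speed of propagation. Two points need correcting.

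\textbf{Step 2 of ii) and the ``main obstacle.''} These rest on a misreading. After Step 1, the function $w_I=e^{-it\Delta}\mathds{1}_IQ_{\geq m}u$ is already in $U^2$. The cutoff acts on it through $T_q=\chi_q$, a fixed spatial operator with $\sum_q T_qf=f$ and $\sum_q\|T_qf\|_{L^2}^2=\|f\|_{L^2}^2$. The $t$-dependence sits in $w_I$, not in $T_q$. So Proposition~\ref{Pref} applies verbatim with $M_1=M_2=1$. Alternatively, check it on atoms: $\chi_q$ maps an atom with coefficients $\phi_k$ to $(\sum_k\|\chi_q\phi_k\|_{L^2}^2)^{1/2}$ times an atom. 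Hence
\[
\sum_{I\in\mathcal{I}_{-m}}\sum_{q\in C_{-m+k}}\|\chi_q e^{-it\Delta}\mathds{1}_IQ_{\geq m}u\|_{U^2}^2\le\sum_{I\in\mathcal{I}_{-m}}\|\mathds{1}_IQ_{\geq m}u\|_{U^2_\Delta}^2\les\|u\|_{U^2_\Delta}^2 .
\]
This holds for any partition of $\R^n$ into cubes. For the inequality as stated, neither the frequency localization nor the speed of propagation is needed.

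Your recentering and commutator machinery is therefore superfluous, and as sketched it would not close. Proposition~\ref{Pref} does not handle time-dependent operators. Also, on frequency-$2^k$ data the flow $e^{-i(t-t_I)\Delta}$ varies at rate $2^{2k}$, not $2^m$, so your claim that the commutator errors have $t$-derivatives of size $2^m$ is unsubstantiated. Moving $\chi_q$ across the flow is a genuine issue only when the cutoff is placed on $u$ itself. The paper deals with that later, in the proof of Lemma~\ref{U2hm2}.

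\textbf{The commutation claim in iii).} The assertion that $P_{\geq m}(D_t)$ commutes with $e^{it\Delta}$ is false. Conjugation by $e^{it\Delta}$ shifts $\tau$ by $|\xi|^2$, which is exactly why $Q_{\geq m}\neq P_{\geq m}(D_t)$. Your parenthetical gives the right reading: $P_{\geq m}$ in \eqref{U2fd} must be the modulation projector $Q_{\geq m}$, as in its later use with $Q_{\geq m-10}$. Then $e^{-it\Delta}\mathds{1}_IQ_{\geq m}\mathds{1}_{I_0}e^{it\Delta}\phi=\mathds{1}_IP_{\geq m}(D_t)\mathds{1}_{I_0}\phi$, which is \eqref{aux3}.

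With the literal time projector the bound is false. Take $\phi$ at frequency $2^k$ with $2^{2k}\gg 2^m$. Then $P_{\geq m}(D_t)\mathds{1}_{I_0}e^{it\Delta}\phi=e^{it\Delta}\phi+O(\la 2^m(t-t_0)\ra^{-N})\|\phi\|_{L^2}$ for $t\geq t_0$. The terms with $I$ far to the right of $t_0$ are then $\gtrsim\|\phi\|_{L^2}$, and the weighted sum diverges.
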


Remark. The above is stated for the standard set of time intervals $\mathcal{I}_{-m}$, but it holds true for any set of intervals $\{I_l\}_{l \in \Z}$ is a set of intervals in $\R$ of length $\approx 2^{-m}$ with mutually disjoint interior. The set of spatial cubes $C_{-m+k}$ in part ii) can be similarly generalized. 

\begin{proof} i) is a corollary of Lemma \ref{U2dec}. ii) follows from i), the speed of propagation for free waves and Proposition \ref{Pref}. Finally iii) follows from \eqref{aux3}. 

\end{proof}

Similarly to \eqref{DU},  we define the space
\begin{equation} \label{DUD}
DU^p_\Delta = \{ (i \partial_t + \Delta) u: u \in U^p_\Delta \}, \quad DV^p_\Delta = \{ (i \partial_t + \Delta) u: u \in V^p_\Delta \}
\end{equation}
with the induced norm; here $\partial_t, \Delta $ are distributional derivatives. We note that since $u = e^{it\Delta} \tilde u, \tilde u \in U^2$, then $(i \partial_t + \Delta) u = e^{it\Delta} i \partial_t \tilde u$ and this justifies the use of notation. Next we adapt the statements in Theorem \ref{DualUV}  to the new context.  

\begin{theorem} \label{LDUD} If $1 < p,q< \infty, \frac1p+\frac1q=1$ the spaces $DU^p_\Delta$ and $DV^p_\Delta$ can be characterized as the space of distributions for which the following norms are finite:
\begin{equation} 
\|f\|_{DU^p_\Delta}=\sup_{ \phi \in C_0^\infty: \| \phi\|_{V^q_\Delta} \leq 1}  \left| \int \la  f , \phi \ra_{L^2} dt \right|,
\quad  
\|f\|_{DV^p}=\sup_{ \phi \in C_0^\infty: \| \phi\|_{U^q_\Delta} \leq 1}  \left| \int \la  f , \phi \ra_{L^2} dt \right|,
\end{equation}
\end{theorem}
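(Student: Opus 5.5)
The plan is to transport Theorem \ref{DualUV} through the unitary conjugation $u \mapsto \tilde u = e^{-it\Delta}u$. By definition, $u \in U^p_\Delta$ (resp. $V^p_\Delta$) iff $\tilde u \in U^p$ (resp. $V^p$), and we set $\|u\|_{U^p_\Delta} = \|\tilde u\|_{U^p}$ (resp. the $V^p$ version). Moreover $(i\partial_t+\Delta)u = e^{it\Delta}\, i\partial_t \tilde u$. Hence $f \in DU^p_\Delta$ iff $\tilde f := -i e^{-it\Delta} f \in DU^p$, with $\|f\|_{DU^p_\Delta} = \|\tilde f\|_{DU^p}$ by the induced norm; the same holds for $DV^p_\Delta$. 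The first step is to record this identification precisely at the level of distributions. Since $e^{it\Delta}$ commutes with $\partial_t$ only after accounting for the $\Delta$ term, I will verify the conjugation identity by testing against $\phi \in C_0^\infty$ and using that $e^{\pm it\Delta}$ maps Schwartz functions in $(t,x)$ to smooth, rapidly decaying-in-$x$ functions.

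The second step is the pairing identity. For each fixed $t$, unitarity of $e^{it\Delta}$ on $L^2$ gives
\[
\la f(t), \phi(t) \ra_{L^2} = \la i\tilde f(t), \tilde\phi(t)\ra_{L^2}, \qquad \tilde\phi = e^{-it\Delta}\phi .
\]
Therefore $\int \la f,\phi\ra\, dt = i\int \la \tilde f,\tilde\phi\ra\, dt$, together with $\|\phi\|_{V^q_\Delta} = \|\tilde\phi\|_{V^q}$ and $\|\phi\|_{U^q_\Delta} = \|\tilde\phi\|_{U^q}$. Taking the supremum and applying \eqref{TDU} and \eqref{TDV} to $\tilde f$ then yields the claimed characterizations, provided the classes of test functions match.

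The main obstacle is exactly this matching: $\phi \in C_0^\infty(\R^{n+1})$ does not give $\tilde\phi \in C_0^\infty$, because $e^{-it\Delta}$ destroys compact support in $x$, and conversely. I will handle it by density in both directions.

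1. I will show the supremum in \eqref{TDU} may be taken over test functions $\psi$ that are smooth, compactly supported in $t$, and Schwartz in $x$. Such $\psi$ can be approximated by spatial cutoffs $\chi(x/R)\psi$, and the $V^q$ and $U^q$ norms converge. These norms are controlled by $\int \|\partial_t \psi\|_{L^2}\,dt$, via $V^1 \subset U^q$, together with $L^\infty_t L^2_x$, and both quantities converge under the cutoff.
2. The class of $\psi$ in item 1 is invariant under $e^{\pm it\Delta}$.
3. Every such $\psi$ is in turn approximated by $C_0^\infty$ functions in the $V^q$ and $U^q$ norms.

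Combining these, the suprema over the two test classes coincide, and the pairing $\int\la f,\psi\ra\,dt$ passes to the limit for the distributions under consideration; this uses the remark that in practice $f \in L^1_{t,loc}L^2_x$. The normalization conventions for $U^p$ and $V^p$ transfer unchanged, since $e^{it\Delta}$ preserves limits at the endpoints in $L^2$.
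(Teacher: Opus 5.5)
Your proposal is correct and follows the same route as the paper: transport Theorem \ref{DualUV} through the conjugation $u \mapsto e^{-it\Delta}u$, using $(i\partial_t+\Delta)u = e^{it\Delta}\, i\partial_t \tilde u$ and the unitarity of $e^{it\Delta}$ in the pairing. Your density argument, which reconciles the test classes $C_0^\infty$ and $e^{-it\Delta}C_0^\infty$ through the invariant class $C_c^\infty(\R_t;\mathcal S)$, the embedding $V^1\subset U^q$, and the a priori assumption $f\in L^1_{t,loc}L^2_x$, fills in a step the paper leaves implicit.
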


Just as we had a remark after Theorem \ref{DualUV}, we have the corresponding remark in place in the new context. 

\begin{r1}
In practice, the distributions $f$ that we test will have additional properties such as $f \in L^1_{t, loc} L^2_x$, and, as a consequence, the associated $u$ with the property $f = (i\partial_t +\Delta) u$ comes with a priori continuity properties $u \in C_t L^2_x$. In order to uniquely identify $u$, one has to impose a normalization, such as the standard one in $U^p_\Delta$, $\lim_{t \rightarrow a+} u(t)=0$, or the standard one in $V^p_\Delta$, $\lim_{t \rightarrow b-} u(t)=0$, or the one imposed by the initial data $u(t_0)=u_0$.
\end{r1}

\subsection{Augmenting $V^2_\Delta$ with refined mass and local smoothing estimates.}

The space $V^2_\Delta$ provides a lot of information that is available for free solutions to the Schr\"odinger equation, but it misses two basic pieces of information: 

\begin{itemize}

\item a refined, local in time, version of the standard mass estimate $L^\infty_t L^2_x$ that reflects the speed of propagation. 

\item local smoothing. 

\end{itemize}

Both these structures are available in the smaller space $U^2_\Delta$; we will establish this in the next section. We do not imply that these are the only missing pieces of information from $V^2_\Delta$, but available for say $U^2_\Delta$ or $\dot X^{0,\frac12,1}$, but rather that these are the missing pieces that play a key role in our work.  There is an additional piece of information that we add to the above:

\begin{itemize}

\item a further refined version of the aforementioned structures in the context of high modulations. 

\end{itemize}

The later structure is particular to a resolution type structure, since global free waves have zero modulation. 

We begin with the refined version of the standard $L^\infty_t L^2_x$ estimate. For a solution that is localized the frequency $\approx 1$ the speed of propagation is $\approx 1$, and within a time interval of size $\approx 1$ there is no meaningful transfer of mass between cubes of side-length $\approx 1$; the evolution is mostly confined to such cubes. This is well quantified by the following norm:
\[
\| v \|_{E_0} = \sup_{I \in \mathcal{I}_0} \left( \sum_{q \in C_0} \| \chi_{I}(t) \chi_{q}(x) v \|^2_{L^\infty_t L^2_x} \right)^\frac12
\]
where  we recall that $\mathcal{I}_k=\{ [2^{k}l,2^{k}(l+1)]: l \in \Z\}$ is the set of intervals of length $2^{k}$ covering $\R$ and $C_k$ is the set of cubes of side-length $2^{k}$ on the physical side covering $\R^n$.  

The general structure (adapted to localization at frequency $\approx 2^k$) is obtained by rescaling:
\[
\| v \|_{E_k} = \sup_{I \in \mathcal{I}_{-2k}} \left( \sum_{q \in C_{-k}} \| \chi_{I}(t) \chi_{q}(x) v \|^2_{L^\infty_t L^2_x} \right)^\frac12.
\]
We refine this construction in the context of high modulation components as follows. Given $m \leq 2k$ we let
\[
\| v \|_{E_{k}^{\geq m}} = \left( \sum_{I \in \mathcal{I}_{-m}, q \in C_{-m+k}}  \| \chi_{I }(t) \chi_{q}(x) v \|^2_{E_k} \right)^\frac12.
\]
$E_{k}^{\geq m}$ records an improvement over $E_k$ by bringing  additional $l^2$ type summation across time intervals of size $\approx 2^{-m}$ and across space cubes of size $2^{-m+k}$.  The improved time structure is possible because we use them to measure functions at modulation $\geq 2^m$ and this is the "elliptic regime" for the equation. The improved spatial structure is due to the finite speed of propagation and the time scales being used. We note that something along these lines was employed by Koch-Tataru in \cite{KT1} in the particular case $m=2k$. 

Next we introduce similar type of information at the level of local smoothing.  Given a vector $\e \in \S^{n-1}$ we let  
\[
\|v\|_{E_{k,\e}} = 2^{\frac{k}2} \|v \|_{L^\infty_{x_\e} L^2_{t, \bar x_\e}}.
\]
These spaces are used to measure the components of functions $f$ with the property that their Fourier transform is localized in the region $|\xi| \approx 2^k, |\xi \cdot \e| \approx 2^k$. 

For high modulation we define 
\[
\|u\|_{E_{k,\e}^{\geq m}}= \left( \sum_{I \in \mathcal{I}_{-m}, q \in C_{-m+k}} \| u \|^2_{E_{k,\e}(I \times q)} \right)^\frac12. 
\]
Next, we collect of the above structures. For fixed $k \in \Z$ we define 
\[
X_k = E_k  \cap \cap_{\e \in V^n}  E_{k, \e}
\]
with the norm
\[
\| v \|_{X_{k}} = \|v\|_{E_k} + \max_{\e \in V^n} \| P_{k,\e} v \|_{E_{k,\e}}  .
\]
We note that the only reason we use finitely many frames ($\e \in V^{n}$) versus the full family ($\e \in \S^{n-1}$) is to have simpler formulations when it comes to duality arguments. 

For $m \leq 2k$, we introduce 
\[
X_{k}^{\geq m}= E_{k}^{\geq m} \cap \cap_{\e \in V^n}  E_{k, \e}^{\geq m},
\]
whose the norm is given by:
\[
\| v \|_{X_{k}^{\geq m}} =  \| v \|_{E_k^{\geq m}} 
+ \max_{\e \in V^n} \|  P_{k,\e} v \|_{E_{k}^{\geq m}}.
\]
The full structure $X_{k,m}= V^2_\Delta \cap X_k \cap X_k^{\geq m}$ has the norm
\[
\| v \|_{X_{k,m}} = \|v\|_{V^2_\Delta} + \|v\|_{X_k} + \| Q_{\geq m} v\|_{X_k^{\geq m}}. 
\]
For later purposes, it is useful to let $X_{k,-\infty}= V^2_\Delta \cap X_k$. 

We now introduce the formal "dual" counterparts of the structures defined above. The precise formulation of duality will be made in the next section in Lemma \ref{Ldual}. The dual of $E_k$ is the normed space $F_k$, whose norm is defined by
\[
\| v \|_{F_k} = \sum_{I \in \mathcal{I}_{-2k}} \left( \sum_{q \in C_{-k}} \| \chi_{I}(t) \chi_{q}(x) v \|^2_{L^1_t L^2_x} \right)^\frac12, 
\]
and the dual of $E_k^{\geq m}$ is $F_{k}^{\geq m}$, whose norm is defined by
\[
\| v \|_{F_{k}^{\geq m}} = \left( \sum_{I \in \mathcal{I}_{-m}}  \sum_{q \in C_{-m+k}} \| \chi_{I}(t) \chi_{q}(x) v \|^2_{F_k}  \right)^\frac12.
\]
Similarly, for $\e \in V^n$ we let
\[
\|v\|_{F_{k,\e}} = 2^{-\frac{k}2} \| v \|_{L^1_{x_\e} L^2_{t,x_\e}}.
\]
For high modulation components we introduce the corresponding structure as follows:
\[
\| v \|_{F_{k,\e}^{\geq m}} = \left( \sum_{I \in \mathcal{I}_{-m}}  \sum_{q \in C_{-m+k}}  \| \chi_{I}(t) \chi_q(x) v \|^2_{F_{k,\e}} \right)^\frac12. 
\]
The formal dual of $X_k$ is $Y_k=F_k + \sum_{\e \in V^n} F_{k,\e}$ whose norm is defined by 
\[
\|f\|_{X_{k}}= \inf_{f=f_1+f_2}   \|f_1\|_{F_{k}}   +  \|f_2\|_{\sum F_{k,\e}}, 
\quad \| f \|_{\sum F_{k,\e}} = \inf_{f = \sum_{\e \in V^n} \tilde P_{k,\e} f_{\e}}  \| \tilde P_{k,\e} f_\e\|_{F_{k,\e}},
\]
We also let $Y_k^{\geq m} = F_{k}^{\geq m} + \sum_{\e \in V^n} F_{k,\e}^{\geq m}$, whose norm is defined in a smilar way.

Finally we combine all the above structures in one 
\[
Y_{k,m}=DU^2_\Delta + Y_k + Y_k^{\geq m},
\]
whose norm is defined in the standard fashion with an additional constraint:
\[
\|f\|_{Y_{k,m}}= \inf_{f=f_1+f_2+ f_3}  \|f_1\|_{DU^2_\Delta} + \|f_2\|_{Y_{k}}   + \| f_3\|_{Y_{k}^{\geq m}},
\]
where we assume that $f_3$ is localized in modulation as follows $Q_{\geq m-10} f_3 = f_3$. We highlight that the use of localization operators on some of the elements in the decomposition is related to the use of localization operators in the original structures whose formal dual we describe here.

\section{Local smoothing estimates and a structural property of the  $U^2_\Delta$ space}

In this section we recall the basic local smoothing estimates from \cite{BIKT-1}. Then we analyze how these structures are related to the $U^2_\Delta$ and $V^2_\Delta$ spaces. The final result of this section establishes the basic fact that functions  $U^2_\Delta$ belong to the structures introduced in the previous section.

Recall from the notation section \ref{subsect:not} that we have introduced the operators $P_{k,\e}$ for $\e \in \S^{n-1}$ which localize, on the frequency side, in the region $|\xi \cdot \e| \approx |\xi|$ and the norms $L^\infty_{x_\e} L^2_{\bar x_\e,t}, L^1_{x_\e} L^2_{\bar x_\e,t}$. From \cite[Lemma 3.2]{BIKT-1} we have the following result
\begin{lemma}
 If $f\in L^2(\R^n)$, $k\in\Z$, and $\e\in\mathbb{S}^{n-1}$ then
   \begin{equation}
     \|e^{it \Delta} P_{k,\e} f\|_{L^\infty_{x_\e} L^{2}_{t,\bar x_\e}} \lesssim  
     2^{ -k/2} \|f\|_{L^2}.
     \label{limaxa}\end{equation}
\end{lemma}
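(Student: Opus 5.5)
The plan is to reduce \eqref{limaxa} to a one-dimensional change of variables in the frequency variable $\xi_\e=\xi\cdot\e$, using Plancherel in the transverse variables and in time. By rotation invariance of $e^{it\Delta}$ and of all norms involved, we may assume $\e=e_1$. We then write $x=(x_1,x')$, $\xi=(\xi_1,\xi')$, and set $g=P_{k,\e}f$, so that $\hat g$ is supported in $|\xi_1|\approx 2^k$. We must then bound, uniformly in $x_1$, the quantity $\|e^{it\Delta}g(x_1,\cdot)\|_{L^2_{t,x'}}$.

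\textbf{Step 1: fix $x_1$ and apply Plancherel in $x'$.} For fixed $x_1$, the partial Fourier transform in $x'$ of $e^{it\Delta}g$ is
\[
\frac{1}{2\pi}\int e^{ix_1\xi_1}e^{-it(\xi_1^2+|\xi'|^2)}\hat g(\xi_1,\xi')\,d\xi_1 .
\]
By Plancherel in $x'$, it suffices to bound, for each fixed $\xi'$, the $L^2_t$ norm of this one-dimensional integral, and then integrate the result in $\xi'$.

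\textbf{Step 2: split the $\xi_1$-support and change variables.} We split the $\xi_1$-support into the two pieces $\xi_1\approx 2^k$ and $\xi_1\approx -2^k$; each of these has $\xi_1$ of one sign. On each piece we change variables to $\tau=\xi_1^2+|\xi'|^2$, with Jacobian $d\tau=2|\xi_1|\,d\xi_1\approx 2^k d\xi_1$. The integral then becomes a Fourier transform in $\tau$ of the function
\[
h(\tau)=e^{ix_1\xi_1(\tau)}\hat g(\xi_1(\tau),\xi')\,(2|\xi_1(\tau)|)^{-1}.
\]

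\textbf{Step 3: apply Plancherel in $t$ and undo the substitution.} Plancherel in $t$ gives $\|h\|_{L^2_\tau}^2$, and changing back to $\xi_1$ yields
\[
\int |\hat g(\xi_1,\xi')|^2 (2|\xi_1|)^{-1}\,d\xi_1\lesssim 2^{-k}\int|\hat g(\xi_1,\xi')|^2d\xi_1 .
\]
Crucially, the phase factor $e^{ix_1\xi_1}$ has modulus one, so the bound is independent of $x_1$.

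\textbf{Step 4: integrate in $\xi'$ and take the supremum.} Integrating in $\xi'$ gives $2^{-k}\|g\|_{L^2}^2\lesssim 2^{-k}\|f\|_{L^2}^2$. Taking the supremum over $x_1$ and square roots yields \eqref{limaxa}.

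The only point requiring care is the change of variables in Step 2: $\xi_1\mapsto\xi_1^2$ must be injective, which is exactly why we split by the sign of $\xi_1$. The lower bound $|\xi_1|\gtrsim 2^k$ guaranteed by $P_{k,\e}$ is what produces the gain $2^{-k}$. The two sign pieces are combined with the triangle inequality. Everything else is routine Plancherel.
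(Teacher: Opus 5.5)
Your proposal is correct. Plancherel in $(t,\bar x_\e)$, after the substitution $\tau=\xi_\e^2+|\bar\xi_\e|^2$ on each sign piece of the $\xi_\e$-support, gives a Jacobian $2|\xi_\e|\approx 2^k$ that produces the factor $2^{-k}$ uniformly in $x_\e$; this is the standard proof of this local smoothing bound, and the paper does not reprove the lemma but quotes it from \cite[Lemma 3.2]{BIKT-1}, where essentially this computation is carried out.
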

An immediate consequence of this is that functions in  $U^2_\Delta$ enjoy a similar estimate
 \begin{equation}
     \| P_{k,\e} u\|_{L^\infty_{x_\e} L^{2}_{t,\bar x_\e}} \lesssim  
     2^{ -k/2} \|u\|_{U^2_\Delta},
     \label{U2LE}\end{equation}
which can be easily checked at the level of atoms in $U^2_\Delta$. However, it seems unlikely that functions in $V^2_\Delta$ enjoy such estimates and this is one of the reasons we needed to augment the structure of the $V^2_\Delta$ with local smoothing estimates.

Next we introduce the corresponding estimates for the inhomogeneous equation
\begin{equation} \label{inhLE}
(i \partial_t + \Delta) \psi = f, \quad  \psi(0,x)= \psi_0.
\end{equation}
We recall the following result from \cite{BIKT-1}.

\begin{lemma} \label{LEinh} i) Assume that $f \in L^1_t L^2_x$. Then for any $\e_1 \in S^{n-1}$ the solution to \eqref{inhLE} satisfies
\begin{equation}
2^{\frac{k}2} \| P_{k, \e_1} \psi \|_{L^\infty_{x_{\e_1}} L^2_{t, \bar x_{\e_1}}} 
\les \|\psi_0 \|_{L^2}+  \|f\|_{L^1_t L^2_x}. 
\end{equation}

ii) Assume that, for some $\e \in \S^{n-1}$, $f \in L^1_{x_\e} L^2_{t, \bar x_\e}$ and $\hat f$ is localized in the region $|\xi| \approx 2^k$ and $|\xi \cdot \e| \ges 2^k$. Then for any $\e_1 \in S^{n-1}$ the solution to \eqref{inhLE} satisfies
\begin{equation}
2^{\frac{k}2} \| P_{k, \e_1} \psi \|_{L^\infty_{x_{\e_1}} L^2_{t, \bar x_{\e_1}}} 
\les \|\psi_0 \|_{L^2}+ 2^{-\frac{k}2} \|f\|_{L^1_{x_\e} L^2_{t, \bar x_\e}}. 
\end{equation}
In addition, the solution satisfies the standard mass estimate
\begin{equation}
 \| \psi \|_{L^\infty_t L^2_x} 
\les \|\psi_0 \|_{L^2}+ 2^{-\frac{k}2} \|f\|_{L^1_{x_\e} L^2_{t, \bar x_\e}}. 
\end{equation}
\end{lemma}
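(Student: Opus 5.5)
The plan is to work entirely with the Duhamel formula, and to handle the two types of forcing by different mechanisms. For the data term we simply use \eqref{limaxa}. So throughout we may assume $\psi_0=0$, and by frequency localization that $f$ is localized at frequency $2^k$.

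\emph{Part i).} Write
\[
\psi(t)=-i\int_{\R}\mathds{1}_{\{s<t\}}\,e^{i(t-s)\Delta}f(s)\,ds .
\]
For each fixed $s$, the function $t\mapsto \mathds{1}_{\{t>s\}}e^{i(t-s)\Delta}P_{k,\e_1}f(s)$ is a free wave restricted to a half-line in time. The norm $L^\infty_{x_{\e_1}}L^2_{t,\bar x_{\e_1}}$ is monotone under multiplication by $\mathds{1}_{\{t>s\}}$, since the time integration sits inside the $L^2$. Hence \eqref{limaxa} bounds this norm by $2^{-k/2}\|f(s)\|_{L^2}$. Minkowski's inequality in $s$ then gives i). The mass estimate for $L^1_tL^2_x$ forcing is the trivial energy bound.

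\emph{Part ii), mass estimate.} By duality, \eqref{limaxa} is equivalent to
\[
\Big\|\int e^{-is\Delta}\tilde P_{k,\e}F(s)\,ds\Big\|_{L^2}\les 2^{-k/2}\|F\|_{L^1_{x_\e}L^2_{t,\bar x_\e}} ,
\]
since its adjoint is exactly the map $L^1_{x_\e}L^2_{t,\bar x_\e}\to L^2$ above. This controls the non-truncated Duhamel integral, which is a free wave. The retarded solution is handled by truncating the source: for a fixed $t_0$, $\psi(t_0)=-ie^{it_0\Delta}\int e^{-is\Delta}\mathds{1}_{\{s<t_0\}}f(s)\,ds$. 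The cutoff $\mathds{1}_{\{s<t_0\}}$ does not increase $\|f\|_{L^1_{x_\e}L^2_{t,\bar x_\e}}$, since the time variable lies inside the $L^2$. The catch is that it destroys the frequency localization $|\xi\cdot\e|\ges 2^k$ needed to apply the dual estimate. I would fix this by writing $f=\tilde P_{k,\e}f$ and commuting the time cutoff past the spatial multiplier $\tilde P_{k,\e}$; the two commute, since one acts in $t$ and the other in $x$. This gives
\[
\|\psi(t_0)\|_{L^2}\les 2^{-k/2}\|f\|_{L^1_{x_\e}L^2_{t,\bar x_\e}}
\]
uniformly in $t_0$.

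\emph{Part ii), local smoothing.} The local smoothing bound with an $L^1_{x_\e}L^2$ source is the genuinely new estimate, and I expect it to be the main obstacle: the Christ--Kiselev type argument does not apply when the integrability is in the spatial variable $x_\e$. My plan is to decompose $\psi=\psi_{\mathrm{free}}+\psi_{\mathrm{par}}$.
\begin{itemize}
\item Here $\psi_{\mathrm{par}}=\mathcal F^{-1}\big[(-\tau-|\xi|^2+i0)^{-1}\hat f\big]$ is a particular (outgoing) solution, defined on the whole space-time.
\item The remainder $\psi_{\mathrm{free}}$ is a free wave with data $-\psi_{\mathrm{par}}(0)$.
\end{itemize}
The free part is handled by \eqref{limaxa}, once we know $\|\psi_{\mathrm{par}}(0)\|_{L^2}\les 2^{-k/2}\|f\|$. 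That bound follows from the same duality argument as the mass estimate.

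For $\psi_{\mathrm{par}}$, split $\hat f$ according to the sign of $\xi_\e$; say $\xi_\e\approx+2^k$. Take the Fourier transform in $(t,\bar x_\e)$ only, with dual variables $(\tau,\bar\xi)$, and treat $x_\e$ as an evolution variable. Then the symbol factorizes as
\[
-\tau-|\bar\xi|^2-\xi_\e^2=(\lambda-\xi_\e)(\lambda+\xi_\e),\qquad \lambda=\sqrt{-\tau-|\bar\xi|^2}.
\]
On the support $\xi_\e\approx 2^k$, the factor $(\lambda+\xi_\e)^{-1}$ is a harmless multiplier of size $2^{-k}$. The factor $(\lambda-\xi_\e+i0)^{-1}$ corresponds to solving a first-order equation $(D_{x_\e}-\lambda)w=g$ in $x_\e$. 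Its solution is the unitary-in-$L^2_{t,\bar x_\e}$ propagator integrated against the source, and it satisfies
\[
\|w\|_{L^\infty_{x_\e}L^2_{t,\bar x_\e}}\les\|g\|_{L^1_{x_\e}L^2_{t,\bar x_\e}} .
\]
Combining the two factors gives
\[
\|\psi_{\mathrm{par}}\|_{L^\infty_{x_\e}L^2_{t,\bar x_\e}}\les 2^{-k}\|f\|_{L^1_{x_\e}L^2_{t,\bar x_\e}},
\]
which is the claim when $\e_1=\e$. In the region where $\lambda$ is complex or far from $\xi_\e$ (the non-resonant, elliptic region), the kernel is better still. I would control it via $L^2$ estimates with the factor $2^{-k}$, absorbing commutator errors from the frequency cutoffs.

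For a general $\e_1\neq\e$, I would avoid redoing the factorization. Instead I would use that $\psi_{\mathrm{par}}$ differs from a free wave only in the region where $\hat\psi_{\mathrm{par}}$ concentrates near the paraboloid. This reduces the claim to \eqref{limaxa} applied to a superposition of free waves, indexed by $x_\e=a$, with $L^2$ data of size $2^{-k/2}\|f(a)\|_{L^2_{t,\bar x_\e}}$. Minkowski's inequality in $a$ then closes the bound.
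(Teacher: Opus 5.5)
Your part i), your mass estimate in ii), and your local smoothing bound in ii) for $\e_1=\e$ are sound. The paper gives no proof of this lemma and simply cites \cite{BIKT-1}. In passing: multiplying by $\mathds{1}_{\{s<t_0\}}$ never touches the spatial Fourier support, so the ``catch'' you raise in the mass estimate does not exist, and your fix is just that observation.

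The genuine gap is your last paragraph, local smoothing in a direction $\e_1\neq\pm\e$. First isolate the resonant region $\lambda\approx 2^k$, with your $\lambda$, so that the pieces really live at frequency $2^k$. There, the ``superposition of free waves indexed by $x_\e=a$'' produced by your factorization is
\[
\psi_{\mathrm{par}}=-i\int_{\R}\mathds{1}_{\{x_\e>a\}}\,W_a\,da,\qquad W_a=e^{i(x_\e-a)\lambda(D_t,D_{\bar x_\e})}h_a .
\]
Here the $W_a$ are free waves with $\|W_a\|_{L^{\infty,2}_\e}=\|h_a\|_{L^2_{t,\bar x_\e}}\approx 2^{-k/2}\|W_a(0)\|_{L^2}$, and $\int\|h_a\|_{L^2_{t,\bar x_\e}}da\les 2^{-k}\|f\|_{L^1_{x_\e}L^2_{t,\bar x_\e}}$.

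The half-space cutoff cannot be dropped: without it the integral is a free wave, which cannot have forcing $f$. Minkowski plus \eqref{limaxa} does not account for it. In part i) the cutoff was in $t$, so it commuted with $P_{k,\e_1}$ and the norm was monotone under it. Here $P_{k,\e_1}(\mathds{1}_{\{x_\e>a\}}W_a)\neq\mathds{1}_{\{x_\e>a\}}P_{k,\e_1}W_a$. You also cannot discard $P_{k,\e_1}$: the hypothesis gives $|\xi\cdot\e|\ges 2^k$ but not $|\xi\cdot\e_1|\ges 2^k$, and without the projector the $L^{\infty,2}_{\e_1}$ norm of such a free wave is in general infinite.

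Reading your sentence instead as a modulation decomposition of $\psi_{\mathrm{par}}$ into $\tau$-superpositions of free waves does not help. That route gives $\les 2^{-k}\|f\|_{L^1_{x_\e}L^2_{t,\bar x_\e}}$ for each dyadic modulation $2^j\ll 2^{2k}$, uniformly in $j$, and these bounds do not sum.

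The missing ingredient is a bound for the commutator in
\[
P_{k,\e_1}(\mathds{1}_{\{x_\e>a\}}W_a)=\mathds{1}_{\{x_\e>a\}}P_{k,\e_1}W_a+[P_{k,\e_1},\mathds{1}_{\{x_\e>a\}}]W_a .
\]
The first term is handled by monotonicity and \eqref{limaxa}. For the second:
\begin{itemize}
\item The kernel of $P_{k,\e_1}$ lives at scale $2^{-k}$ along $\e_1$, so the commutator is concentrated, with Schwartz tails, in the slab $\{|x_\e-a|\les 2^{-k}\}$.
\item Restricting a function at frequency $\les 2^k$ to a hyperplane $\{x_{\e_1}=b\}$ costs a factor $2^{k/2}$.
\item The slab has width $2^{-k}$ in $x_\e$, and on it you control $\|W_a\|_{L^{\infty,2}_\e}$, which gains back $2^{-k/2}$.
\end{itemize}
Together these give $\|[P_{k,\e_1},\mathds{1}_{\{x_\e>a\}}]W_a\|_{L^{\infty,2}_{\e_1}}\les\|h_a\|_{L^2_{t,\bar x_\e}}$. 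Integrating in $a$ then closes the estimate with the correct constant $2^{-k}\|f\|_{L^1_{x_\e}L^2_{t,\bar x_\e}}$.

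Some argument of this kind is needed: a representation of $\psi$ by spatially truncated free waves, together with control of the truncation. As written, your proof covers only $\e_1=\pm\e$ and, trivially, $\e_1\perp\e$.
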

The proof of this result can be found in \cite[Lemma 7.4]{BIKT-1}. A direct estimate of the above results is the classical local smoothing estimate.
\begin{lemma} \label{LLE} Assume $q$ is a (spatial) cube of side-length $L$.  If $f$ is localized at frequency $2^k$, the following estimate holds true
\begin{equation} \label{LEloc}
\| e^{it \Delta}  f \|_{L^2_{t,x}(\R \times q)} \les L^\frac12 2^{-\frac{k}2} \| f \|_{L^2}.
\end{equation} 
If $u$ is localized at frequency $2^k$, the following estimate holds true
\begin{equation} \label{LEX}
\| u \|_{L^2_{t,x}(\R \times q)} \les L^\frac12 2^{-\frac{k}2} \|u\|_{X_k}.
\end{equation}

\end{lemma}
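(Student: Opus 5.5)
We first prove \eqref{LEloc} and then deduce \eqref{LEX} from it, using the local smoothing component of $X_k$ through the decomposition \eqref{decej}.

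\textbf{Step 1: the free estimate \eqref{LEloc}.} Decompose $f=\sum_{j}f_j$ with $\tilde P_{k,\e_j}f_j=f_j$ as in \eqref{decej}. Since $V^n$ is finite, it suffices to bound each $e^{it\Delta}f_j$; we may also take $\|f_j\|_{L^2}\les\|f\|_{L^2}$. Fix $j$ and write $\e=\e_j$. The cube $q$ lies in a slab $\{x: x_\e\in J\}$ with $|J|\les L$. Hence
\[
\|e^{it\Delta}f_j\|^2_{L^2_{t,x}(\R\times q)}\le \int_J \|e^{it\Delta}f_j(x_\e\e+\cdot)\|^2_{L^2_{t,\bar x_\e}}\,dx_\e \les L\,\|e^{it\Delta}f_j\|^2_{L^\infty_{x_\e}L^2_{t,\bar x_\e}}.
\]
By \eqref{limaxa}, the right-hand side is at most $L\,2^{-k}\|f\|_{L^2}^2$. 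The lemma is stated for $P_{k,\e}$ rather than $\tilde P_{k,\e}$. To handle this, either observe that the proof in \cite{BIKT-1} applies to any symbol supported in $|\xi\cdot\e|\approx 2^k$, or write $\tilde P_{k,\e}=P_{k-1,\e}+P_{k,\e}+P_{k+1,\e}$ and apply the lemma to each piece. Taking square roots gives \eqref{LEloc}.

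\textbf{Step 2: the bound \eqref{LEX} for $u\in X_k$.} Let $u$ be localized at frequency $2^k$. The same slab argument gives, for each $\e$,
\[
\|P_{k,\e}u\|_{L^2(\R\times q)}\les L^{1/2}\|P_{k,\e}u\|_{L^\infty_{x_\e}L^2_{t,\bar x_\e}} = L^{1/2}2^{-k/2}\|P_{k,\e}u\|_{E_{k,\e}}\le L^{1/2}2^{-k/2}\|u\|_{X_k}.
\]
The remaining task is to reassemble $u$ from the pieces $P_{k,\e}u$, $\e\in V^n$. The natural route is to choose the partition in \eqref{decej} as Fourier multipliers $u=\sum_j m_j(D)u$, where each $m_j$ is supported in a cone around $\e_j$ on which $\rho_k(\xi\cdot\e_j)\equiv 1$ for $|\xi|\approx 2^k$. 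Then $m_j(D)u=m_j(D)P_{k,\e_j}u$.

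\textbf{Main obstacle.} The difficulty is that $m_j(D)$ does not commute with multiplication by $\chi_q$. The kernel of $m_j(D)$ is an $L^1$ bump at spatial scale $2^{-k}$, so $m_j(D)$ is bounded on $L^\infty_{x_\e}L^2_{t,\bar x_\e}$ by Minkowski's inequality. If $L\ges 2^{-k}$, one can therefore either bound $m_j(D)P_{k,\e_j}u$ globally in $L^\infty_{x_\e}L^2$ and then restrict to the slab, or handle the tails of the kernel by summing over dilates of $q$. If $L\ll 2^{-k}$, use Bernstein in $x_\e$ instead: $\|v\|_{L^2(\R\times q)}\les L^{1/2}\|v\|_{L^\infty_{x_\e}L^2}$ holds trivially. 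In either case the conclusion is $\|m_j(D)P_{k,\e_j}u\|_{L^\infty_{x_\e}L^2}\les \|P_{k,\e_j}u\|_{L^\infty_{x_\e}L^2}$, which feeds into the Step 2 bound. I expect that matching the fixed operators $P_{k,\e}$ to the chosen partition, i.e. checking $m_j=m_j\rho_k(\cdot\,\e_j)$, is the only point needing care.
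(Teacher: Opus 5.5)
Your Step 1 is the paper's proof of \eqref{LEloc}: the decomposition \eqref{decej}, the slab bound $\|v\|_{L^2(\R\times q)}\les L^{1/2}\|v\|_{L^\infty_{x_\e}L^2_{t,\bar x_\e}}$, then \eqref{limaxa}. Your treatment of $\tilde P_{k,\e}$ versus $P_{k,\e}$ there is correct. For \eqref{LEX} the paper says only that it follows ``in a similar manner since $X_k$ provides control for the local smoothing norms'', and your Step 2 is the natural way to make that precise. The ``main obstacle'' you describe is not actually one. The slab bound holds for every function and every $L$, so you can bound $m_j(D)P_{k,\e_j}u$ globally in $L^\infty_{x_\e}L^2_{t,\bar x_\e}$ by Minkowski and restrict to the slab afterwards. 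No commutation with $\chi_q$ and no case split in $L$ is needed. The real problem is the point you deferred, and it fails as stated.

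With the paper's cutoff ($\rho^0=1$ on $|s|\le\tfrac54$, $\operatorname{supp}\rho^0\subset(-\tfrac85,\tfrac85)$), $\rho_k(y)=1$ only for $\tfrac45 2^k\le|y|\le\tfrac54 2^k$, and $\rho_k(y)=0$ for $|y|\le\tfrac58 2^k$. ``Localized at frequency $2^k$'' allows $2^{k-1}\le|\xi|\le 2^{k+1}$, so no cone has $\rho_k(\xi\cdot\e_j)\equiv1$ across the annulus. Worse, for $2^{k-1}\le|\xi|\le\tfrac58 2^k$ every $P_{k,\e}$ annihilates $\xi$, so no identity $u=\sum_j m_j(D)P_{k,\e_j}u$ can hold. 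Your Step 1 device $\tilde P_{k,\e}=P_{k-1,\e}+P_{k,\e}+P_{k+1,\e}$ does not help, because $X_k$ controls only the middle piece.

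In fact, with the conventions as written, \eqref{LEX} is false. Take $u(t,x)=\phi(x)$ with $\hat\phi\neq0$ supported in $\tfrac12 2^k\le|\xi|\le\tfrac35 2^k$. Then $P_{k,\e}u=0$ for all $\e$, so $\|u\|_{X_k}=\|u\|_{E_k}=\|\phi\|_{L^2}$, while $\|u\|_{L^2(\R\times q)}=\infty$ for every cube $q$.

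The argument can therefore only be closed after adjusting the definition, which is what the paper's one-line justification tacitly assumes. Let the local smoothing part of $X_k$ control $\tilde P_{k,\e}u$ (equivalently $P_{k',\e}u$ for $|k'-k|\le1$). Since $\tilde\rho_k(y)=1$ for $\tfrac25 2^k\le|y|\le\tfrac52 2^k$, the cone partition behind \eqref{decej}, with aperture $c\le\tfrac15$ and an annular cutoff so that each kernel is an $L^1$ bump at scale $2^{-k}$, satisfies $m_j(\xi)=m_j(\xi)\tilde\rho_k(\xi\cdot\e_j)$ for $2^{k-1}\le|\xi|\le 2^{k+1}$. Your Minkowski-plus-slab argument then yields \eqref{LEX}.
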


\begin{proof} We use the decomposition \eqref{decej}
\[
 f = \sum_{j: \e_j \in V^n} f_j , \quad   \tilde P_{k,\e_j} f_j = f_j. 
\]
and note that, by using \eqref{limaxa} for each $j$ we obtain
\[
\| e^{it \Delta}  f_j  \|_{L^2(q \times \R)} \les  L^\frac12 \| f_j \|_{L^\infty_{x_{\e_j}} L^2_{t, \bar x_{\e_j}}} \les  L^\frac12 2^{-\frac{k}2} \| f \|_{L^2},
\]
from which \eqref{LEloc} follows by summing with respect to $j$. \eqref{LEX} follows in a similar manner since $X_k$ provides control for the local smoothing norms. 
 
\end{proof}

The next result is a straightforward upgrade to \eqref{U2LE}. 

\begin{lemma} \label{LLE2} Assume that $u$ is localized at frequency $2^k$. Then the following holds true
\begin{equation} \label{LElocU2}
\| u \|_{X_k}  \les \| u \|_{U_\Delta^2}.
\end{equation} 
\end{lemma}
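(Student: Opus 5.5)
The plan is to prove \eqref{LElocU2} at the level of $U^2_\Delta$ atoms, since the $X_k$ norm is a norm (sublinear) and the atomic decomposition has $\ell^1$ coefficients. It therefore suffices to show that every atom
\[
a(t)=\sum_{j=1}^K \mathds{1}_{[t_{j-1},t_j)}(t)\, e^{it\Delta}\phi_j, \qquad \sum_j \|\phi_j\|_{L^2}^2=1,
\]
with each $\phi_j$ localized at frequency $2^k$ (we may insert $\tilde P_k$, which commutes with everything and is bounded on $U^2_\Delta$), satisfies $\|a\|_{X_k}\les 1$. The $X_k$ norm has two parts, $E_k$ and $\max_{\e}\|P_{k,\e}\cdot\|_{E_{k,\e}}$, and we treat them separately.

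\emph{Local smoothing part.} Since $P_{k,\e}$ acts only in space, it commutes with the time cutoffs. Hence $|P_{k,\e} a(t,x)|^2=\sum_j \mathds{1}_{[t_{j-1},t_j)}(t)\,|e^{it\Delta}P_{k,\e}\phi_j(x)|^2$, because the time intervals are disjoint. For each fixed $x_\e$ we integrate in $(t,\bar x_\e)$, bound each indicator by $1$, and take the supremum in $x_\e$ term by term:
\[
\|P_{k,\e}a\|^2_{L^\infty_{x_\e}L^2_{t,\bar x_\e}}\le \sum_j \|e^{it\Delta}P_{k,\e}\phi_j\|^2_{L^\infty_{x_\e}L^2_{t,\bar x_\e}}\les 2^{-k}\sum_j\|\phi_j\|_{L^2}^2 .
\]
The last step uses \eqref{limaxa}. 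This gives $\|P_{k,\e}a\|_{E_{k,\e}}\les 1$; this is just \eqref{U2LE}, the $\ell^2$ structure being exactly what the square-function argument uses.

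\emph{Refined mass part.} This is the main step. Fix $I\in\mathcal I_{-2k}$. On $I$ only those steps $j$ with $[t_{j-1},t_j)\cap I\neq\emptyset$ contribute, and for $t\in I$ we have $\|\chi_q a(t)\|_{L^2}\le \sup_j \|\chi_q \mathds{1}_I e^{it\Delta}\phi_j\|_{L^\infty_tL^2_x}$. Bounding the supremum by the $\ell^2$ sum gives
\[
\sum_{q\in C_{-k}}\|\chi_I\chi_q a\|^2_{L^\infty_tL^2_x}\le \sum_j \sum_{q}\|\chi_I\chi_q e^{it\Delta}\phi_j\|^2_{L^\infty_tL^2_x}.
\]
It then suffices to prove the free-wave estimate
\[
\sum_{q\in C_{-k}}\|\chi_I\chi_q e^{it\Delta}\phi\|^2_{L^\infty_tL^2_x}\les \|\phi\|^2_{L^2}
\]
for $\phi$ localized at frequency $2^k$. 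Time translation reduces this to $I=[0,2^{-2k}]$, and rescaling reduces it to $k=0$. There we write $\phi=\sum_{q'}\tilde\chi_{q'}\phi$ (or use $\chi_{q'}$ followed by $\tilde P_0$). Frequency localization at $1$ and time $|t|\le 1$ give the kernel bound $|K_t(x)|\les_N\langle x\rangle^{-N}$ for $e^{it\Delta}\tilde P_0$, uniformly in $t\in[0,1]$. Therefore $\sup_{t\in I}\|\chi_q e^{it\Delta}\tilde P_0 \chi_{q'}\phi\|_{L^2}\les_N \langle d(q,q')\rangle^{-N}\|\chi_{q'}\phi\|_{L^2}$. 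Schur's test over $q,q'$ then yields the $\ell^2$ bound, with a constant independent of $I$. Taking the supremum over $I$ gives $\|a\|_{E_k}\les 1$.

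The only delicate point is the pointwise-in-time kernel estimate uniformly on $[0,1]$, which is needed to commute the $L^\infty_t$ inside the sum over cubes. Because the Schwartz tails of the kernel are uniform for bounded times, this is a standard nonstationary-phase bound and I expect no real obstruction. Summing atoms completes \eqref{LElocU2}.
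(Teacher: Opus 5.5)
Your proposal is correct and follows essentially the same route as the paper: the local smoothing part is \eqref{U2LE}, obtained from \eqref{limaxa} on atoms, and the refined mass part comes from the fast kernel decay of $e^{it\Delta}\tilde P_0$ on a unit time interval after rescaling, then lifted to atoms. You simply supply the details the paper calls obvious, namely the Schur test over cubes and bounding the supremum over steps by the $\ell^2$ sum. One small fix: use the decomposition $\phi=\sum_{q'}\tilde P_0(\chi_{q'}\phi)$ rather than $\sum_{q'}\tilde\chi_{q'}\phi$, since the $\tilde\chi_{q'}$ overlap and do not sum to $1$.
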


\begin{proof} The estimate for $\| u \|_{X_k} $ contains two parts. The local smoothing estimates have been established in \eqref{U2LE}. The refined mass estimate in $E_k$ is obvious for free solutions. Indeed, by rescaling we can assume $k=0$ and due to the translation invariance it suffices to consider $I=[0,1]$. Here the kernel $e^{it \Delta} P_0$ decays fast away from the region $|x| \les 1$, thus leading to the desired estimate. Next it is easy to lift the estimate from free solutions to atoms in $U^2_\Delta$. This concludes the argument.

\end{proof}

In Lemma \ref{LEinh} part ii) we recalled basic properties of the solution to the inhomogeneous equation \eqref{inhLE} with specific structure on the forcing  $f$. In the next result we upgrade the properties of solutions by establishing that they belong to $V^2_\Delta$. 
 
\begin{lemma} \label{U2lat} Assume that, for some $\e \in \S^{n-1}$, $f$ has the properties listed in Lemma \ref{LEinh}, part ii). Then the following holds true
\begin{equation}
\| \psi \|_{V^2_\Delta} \les  \|\psi_0 \|_{L^2}+ 2^{-\frac{k}2} \|f\|_{L^1_{x_\e} L^2_{t, \bar x_\e}}. 
\end{equation}

\end{lemma}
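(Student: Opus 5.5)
The plan is to prove the bound by duality, using Theorem \ref{LDUD} in the form $(DU^2_\Delta)' = V^2_\Delta$. The homogeneous part is trivial: $e^{-it\Delta}e^{it\Delta}\psi_0 = \psi_0$ is constant in time, so $\|e^{it\Delta}\psi_0\|_{V^2_\Delta} \les \|\psi_0\|_{L^2}$. We may therefore take $\psi_0=0$ and, by linearity, reduce to the Duhamel term
\[
\psi(t) = -i\int_0^t e^{i(t-s)\Delta} f(s)\,ds, \qquad \text{that is,} \qquad e^{-it\Delta}\psi(t) = -i\int_0^t e^{-is\Delta} f(s)\,ds .
\]
Using Theorem \ref{DualUV}, this means we must show that $\partial_t(e^{-it\Delta}\psi) = -i e^{-it\Delta} f$ belongs to $DV^2$. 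Equivalently, we must bound the pairing
\[
\Big|\int \la e^{-it\Delta} f, \phi\ra_{L^2}\,dt\Big| = \Big|\int \la f, e^{it\Delta}\phi\ra\,dt\Big|
\]
by $2^{-k/2}\|f\|_{L^1_{x_\e}L^2_{t,\bar x_\e}}\,\|\phi\|_{U^2}$ for all test functions $\phi$ (i.e.\ $e^{it\Delta}\phi\in U^2_\Delta$).

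The key step is local smoothing in the dual form. Since $f$ is frequency localized in the region $|\xi|\approx 2^k$, $|\xi\cdot\e|\ges 2^k$, we may insert $\tilde P_{k,\e}$ (or a slightly enlarged projector) on the test function. Hölder in the frame adapted to $\e$ then gives
\[
\Big|\int \la f, e^{it\Delta}\phi\ra\,dt\Big| \le \|f\|_{L^1_{x_\e}L^2_{t,\bar x_\e}}\,\|\tilde P_{k,\e} e^{it\Delta}\phi\|_{L^\infty_{x_\e}L^2_{t,\bar x_\e}} \les 2^{-k/2}\|f\|_{L^1_{x_\e}L^2_{t,\bar x_\e}}\,\|e^{it\Delta}\phi\|_{U^2_\Delta},
\]
where the last inequality is \eqref{U2LE} (the projector $\tilde P_{k,\e}$ is handled exactly as $P_{k,\e}$, since \eqref{limaxa} is insensitive to that change). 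Taking the supremum over $\phi$ yields
\[
\|e^{-it\Delta} f\|_{DV^2} \les 2^{-k/2}\|f\|_{L^1_{x_\e}L^2_{t,\bar x_\e}} .
\]

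It remains to convert this into a $V^2$ bound for the specific primitive $e^{-it\Delta}\psi$, which is normalized to vanish at $t=0$ rather than at an endpoint. The $DV^2$ statement produces a primitive $w\in V^2$ with the $V^2$ normalization, and $e^{-it\Delta}\psi$ differs from $w$ by a constant in time, namely $w(0)$. Since $|w|_{V^2}$ is unchanged by subtracting a constant, it suffices to control $\|w(0)\|_{L^2}$. For this we use the mass estimate of Lemma \ref{LEinh} ii), which gives
\[
\|\psi\|_{L^\infty_tL^2_x} \les 2^{-k/2}\|f\|_{L^1_{x_\e}L^2_{t,\bar x_\e}},
\]
and this bounds the $L^\infty_tL^2_x$ part of the $V^2$ norm. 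Together with the seminorm bound this gives the claim.

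The main obstacle is justifying the duality rigorously. The forcing $f$ is not a priori in $L^1_tL^2_x$, so we must check that $e^{-it\Delta}\psi$ is a genuine ruled, continuous $L^2$-valued function, so that Theorem \ref{DualUV} applies and the identification of the primitive is legitimate. I would handle this by approximation: first truncate $f$ in time and space, so that it lies in $L^1_tL^2_x$ and continuity of $\psi$ is standard, then pass to the limit using lower semicontinuity of the $V^2$ norm under pointwise-in-$t$ weak $L^2$ convergence.
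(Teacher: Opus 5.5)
Your proposal is correct and follows the paper's proof. Both arguments apply duality through Theorem \ref{LDUD}, bound the pairing by H\"older in the $\e$-frame combined with \eqref{U2LE}, and then correct the $V^2_\Delta$ primitive (normalized at $+\infty$) by a free solution so that it matches the data at $t=0$. The one difference is that you justify the ruled/continuity requirement by approximation, whereas the paper cites the representation formula from \cite{BIKT-1}. For your approximation, truncating in time alone suffices: it commutes with the spatial projectors, and Bernstein in $x_\e$ already gives $f\in L^2_{t,x}$. A spatial truncation, by contrast, would destroy the frequency localization you need in order to insert $\tilde P_{k,\e}$.
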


\begin{proof} 

The proof is based on the duality result from Theorem \ref{DualUV}, part b). Note that, from the previous Lemma, we already know that $\psi \in L^\infty_t L^2_x \cap C_t L^2_x$ which is needed in order to use the duality characterization of $V^2_\Delta$ which requires the ruled function setup. The continuity in time is obvious when the $f \in L^1_t L^2_x$; for the case when $f \in L^1_{x_\e} L^2_{t, \bar x_\e}$ one relies on the representation formula in Lemma 7.5 from \cite{BIKT-1}. 

For $u \in U^2_\Delta \cap \C_0^\infty$ (as in the setup of Theorem \ref{LDUD}), we have the following
\[
|\la f, u \ra |= |\la \tilde P_{k,\e}  f, u \ra| =  |\la \tilde P_{k,\e}  f,  P_{k,\e}  u \ra| \les \| f \|_{L^1_{x_\e} L^2_{t, \bar x_\e}} 
\| P_{k,\e}  u \|_{L^\infty_{x_\e} L^2_{t, \bar x_\e}} \les \| f \|_{L^1_{x_\e} L^2_{t, \bar x_\e}} 
2^{-\frac{k}2} \|  u \|_{U^2_\Delta},
\]
thus invoking Theorem \ref{LDUD} gives $f \in DV^2_\Delta$ and, as a consequence $\psi \in V^2_\Delta$. To be more precise, what we obtain in fact is a solution $\tilde \psi \in V^2_\Delta$ to the inhomegenous equation $(i\partial_t + \Delta ) \psi=f$ whose initial data at $+\infty$ is $0$ and satisfying $\|\tilde \psi \|_{V^2_\Delta} \les 2^{-\frac{k}2} \|f\|_{L^1_{x_\e} L^2_{t, \bar x_\e}}$. The difference $\psi-\tilde \psi$ solves the homogenous equation with initial data at $t=0$ being $\psi_0-\tilde \psi(0)$ and the statement of the Lemma is now obvious. 

\end{proof}

So far we have established that functions in $U^2_\Delta$ and localized at frequency $2^k$ have all properties in $X_{k,m}$ with one exception: being in $E_k^{\geq m}$; the next results validates this property too. 

\begin{lemma} \label{U2hm2}
 Assume that $u \in U^2_\Delta$ and is localized at frequency $2^k$, and $m \in \Z$.  Then the following holds true 
\begin{equation} \label{U2tE}
 \| Q_{\geq m} u \|_{X_k^{\geq m}} \les \|u\|_{U^2_\Delta}.  
\end{equation}
\end{lemma}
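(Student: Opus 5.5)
The bound reduces to Lemma \ref{U2hm} part ii) combined with Lemma \ref{LLE2} applied piece by piece. We must control $\|Q_{\geq m}u\|_{E_k^{\geq m}}$ and $\|P_{k,\e}Q_{\geq m}u\|_{E_{k,\e}^{\geq m}}$ for each $\e\in V^n$. Both norms are $\ell^2$ sums over $I\in\mathcal I_{-m}$ and $q\in C_{-m+k}$ of the localized pieces $\chi_I\chi_q Q_{\geq m}u$, measured in $E_k$ or in $E_{k,\e}(I\times q)$. It therefore suffices to produce a decomposition $Q_{\geq m}u=\sum_{I,q}u_{I,q}$ satisfying three conditions: each $u_{I,q}$ is, up to harmless frequency localization, in $U^2_\Delta$ with $\sum_{I,q}\|u_{I,q}\|_{U^2_\Delta}^2\les\|u\|_{U^2_\Delta}^2$; each $u_{I,q}$ agrees with $Q_{\geq m}u$ on $I\times q$; and the tails of the pieces outside $I\times q$ are rapidly decaying in the distance measured at scale $2^{-m}\times 2^{-m+k}$.

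The natural choice is $u_{I,q}=e^{it\Delta}\chi_q e^{-it\Delta}\mathds 1_I Q_{\geq m}u$. We then apply $\tilde P_k$, which is harmless since $u=\tilde P_k u$ and $\tilde P_k$ is bounded on $U^2_\Delta$. Lemma \ref{U2hm} ii) gives exactly the square-summability $\sum_{I,q}\|\tilde P_k u_{I,q}\|_{U^2_\Delta}^2\les\|u\|_{U^2_\Delta}^2$, since the $U^2_\Delta$ norm of $e^{it\Delta}w$ is the $U^2$ norm of $w$. Lemma \ref{LLE2} (together with \eqref{U2LE} for the $P_{k,\e}$ pieces) then yields $\|\tilde P_k u_{I,q}\|_{X_k}\les\|\tilde P_k u_{I,q}\|_{U^2_\Delta}$. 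Finally, $\chi_{I'}\chi_{q'}$ restricted to $I\times q$ is bounded on $E_k$ and $E_{k,\e}$, since these are built from $L^\infty_tL^2_x$ and $L^\infty_{x_\e}L^2$ norms with sharp cutoffs.

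The main obstacle is the mismatch between the cutoff $\chi_q$, applied at the level of the profile $e^{-it\Delta}\cdot$, and the spatial cutoff $\chi_q(x)$ required in the norms. On $I$, of length $2^{-m}$, a frequency-$2^k$ wave moves a distance $2^{k-m}$, which is exactly the side of $q$. So $\chi_{q'}(x)\,\tilde P_k u_{I,q}$ is essentially supported in $q'$ within a bounded number of neighbours of $q$. For $q'$ far from $q$ we aim to show the contribution to the $E_k$ and $E_{k,\e}(I\times q')$ norms decays like $\langle 2^{m-k}d(q,q')\rangle^{-N}\|\chi_q e^{-it\Delta}\mathds 1_I Q_{\geq m}u\|_{U^2}$. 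To get this, we first replace $\chi_q$ by a smooth bump $\tilde\chi_q$; this is justified by the Remark after Lemma \ref{U2hm}, since the sharp cutoff only enters at the level of $\ell^2$ orthogonality. We then use kernel bounds for $e^{it\Delta}\tilde P_k$ for $|t|\les 2^{-m}\le 2^{-2k}\cdot 2^{2k-m}$, checked on $U^2$ atoms as in Lemma \ref{LLE2}.

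Schur's test (Cauchy--Schwarz in $q$) then converts this rapid off-diagonal decay into an $\ell^2$ bound. The analogous decay is needed in time: the pieces $\mathds 1_I$ are sharp, so $\chi_{I'}$ for $I'\neq I$ sees nothing and no time tails arise. Summing over $I,q$ gives \eqref{U2tE}.
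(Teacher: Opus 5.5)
Your architecture matches the paper's. You cut the profile into cubes of side $2^{k-m}$ on each $I\in\mathcal I_{-m}$, get $\ell^2$ summability from Lemma \ref{U2hm}, bound each piece in $X_k$ by Lemma \ref{LLE2}, and close with off-diagonal decay $\langle 2^{m-k}d(q,q')\rangle^{-N}$ and Schur. The paper obtains the decay from weighted bounds via $(x+2it\nabla)^N e^{it\Delta}=e^{it\Delta}x^N$ rather than from kernel estimates; either would do.

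\textbf{The gap.} The decomposition you actually wrote down breaks the decay step. In $u_{I,q}=e^{it\Delta}\chi_q e^{-it\Delta}\mathds 1_I Q_{\geq m}u$ the cube cutoff is imposed on the profile at time $0$ for every $I$. So on $I$ you are evaluating $e^{it\Delta}\tilde P_k(\chi_q\,\cdot\,)$ at times $|t|\approx d(I,0)$, not $|t|\les 2^{-m}$, and the kernel bounds you quote do not apply. When $d(I,0)\gg 2^{-m}$, frequency-$2^k$ data starting in $q$ has, by time $t\in I$, spread over a region of diameter $\approx 2^k d(I,0)\gg 2^{k-m}$. That is, it occupies a number of cubes of $C_{-m+k}$ growing like a power of $2^m d(I,0)$. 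Consequently the bound
$\|\tilde P_k u_{I,q}\|_{E_k(I\times q')}\les\langle 2^{m-k}d(q,q')\rangle^{-N}\|\chi_q e^{-it\Delta}\mathds 1_IQ_{\geq m}u\|_{U^2}$ is false for such $I$. Schur's test then loses a power of $2^m d(I,0)$, which is not uniform in $I$. Your heuristic that the wave moves only $2^{k-m}$ during $I$ is correct only if the localization is done at a time inside $I$.

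\textbf{The repair.} This is exactly what the paper's reduction to $I=[0,2^{-m}]$ by time translation accomplishes. For each $I$ pick $t_I\in I$ and use $u_{I,q}=e^{i(t-t_I)\Delta}\chi_q e^{-i(t-t_I)\Delta}\mathds 1_I Q_{\geq m}u$. Since $e^{-i(t-t_I)\Delta}=e^{it_I\Delta}e^{-it\Delta}$ and $e^{it_I\Delta}$ is a fixed unitary on $L^2$, you have $\|e^{-i(t-t_I)\Delta}\mathds 1_IQ_{\geq m}u\|_{U^2}=\|\mathds 1_IQ_{\geq m}u\|_{U^2_\Delta}$. Proposition \ref{Pref} with $T_q=\chi_q$ then gives the $\ell^2_q$ bound for each fixed $I$, and Lemma \ref{U2hm} i) gives the $\ell^2_I$ bound. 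Now $|t-t_I|\le 2^{-m}$ on $I$. Your argument (non-stationary phase for far cubes, Lemma \ref{LLE2} for near ones, checked on atoms) then goes through for both $E_k^{\geq m}$ and $E_{k,\e}^{\geq m}$.

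Two smaller points:
\begin{itemize}
\item $u_{I,q}$ does not agree with $Q_{\geq m}u$ on $I\times q$ (your second listed requirement), for either choice of base time. That is harmless, since you only use $\sum_q u_{I,q}=\mathds 1_I Q_{\geq m}u$ plus off-diagonal decay.
\item Replacing $\chi_q$ by a smooth bump is unnecessary, because $\tilde P_k$ already smooths at scale $2^{-k}\le 2^{k-m}$ (using $m\le 2k$). If you do make the replacement, the justification is Proposition \ref{Pref} with bounded overlap, not the Remark after Lemma \ref{U2hm}.
\end{itemize}
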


\begin{proof} We begin with the estimate in $E_k^{\geq m}$. We rely on \eqref{U2IXhm} which we recall here for convenience:
\[
 \sum_{I \in \mathcal{I}_{-m}} \sum_{q \in C_{-m+k}} \| \chi_q e^{-it\Delta} \chi_{I}(t) Q_{\geq m} u \|_{U^2}^2 \les \|u\|^2_{U^2_\Delta}.  
\]
This estimate is very close to what we need to establish, with two main obstacles: the lack of commutativity between $e^{-it\Delta}$ and $\chi_q$ and the fact that $e^{\pm it\Delta}$ does not preserve the compact (in space) support property of $\chi_q$. The argument below essentially deals with these issues so as to obtain the desired inequality. 

Given the $l^2$ structure with respect to the intervals in $I \in \mathcal{I}_{-m}$, it suffices to consider a single interval $I  \in \mathcal{I}_{-m}$ and aim to prove
\begin{equation} \label{U2tE2}
 \sum_{q \in C_{-m+k}} \| u \|^2_{E_k(q \times I)} \les \sum_{q \in C_{-m+k}}  \| \chi_q e^{-it\Delta} \chi_{I}(t) u \|_{U^2}^2;
\end{equation}
we tacitly replaced $Q_{\geq m} u$ with $u$, since the modulation information has been already used. Since the estimate is time translation invariant, it suffices to consider the case $I=[0,2^{-m}]$. We let $u_q = \chi_q e^{-it\Delta} \chi_{I}(t) u$ and this gives a decomposition 
\[
u = \sum_{q \in C_{-m+k}} \tilde P_k e^{it\Delta} u_{q},
\]
where $u_{q}$ is supported in $q \times I$ and, invoking Proposition \ref{Pref}, 
\[
\sum_{q \in C_{-m+k}} \| u_{q} \|_{U^2}^2 \les \|u\|^2_{U^2_\Delta}.  
\]
Since $e^{it\Delta} u_{q} \in U^2_\Delta$, we use \eqref{LElocU2} to obtain
\[
\| e^{it\Delta}  \tilde P_k u_{q} \|_{E_k} \les  \| u_{q} \|_{U^2}. 
\]
On the other hand, within the time interval $I$, the kernel of $e^{it\Delta}  \tilde P_k$ is concentrated in the region $|x| \les 2^{-m+k}$, and this is why, for any $q,q' \in C_{-m+k}$, we claim the stronger estimate
\begin{equation} \label{uqconc}
\| e^{it\Delta}  \tilde P_k u_{q} \|_{E_k(q')} \les  \la 2^{m-k} d(q,q') \ra^{-N} \| u_{q} \|_{U^2}. 
\end{equation}
It is now obvious that \eqref{U2tE2} follows from \eqref{uqconc}, and this in turn proves our main claim  \eqref{U2tE}.  

It remains to prove \eqref{uqconc}. Let us first establish this for when $u_q = \mathds{1}_{J}(t) \phi$ (a piece of an atom) which in turn creates a single free solution $e^{it\Delta}  \tilde P_k \phi$, where $\phi$ is localized in $q$ and the free solution is truncated in the time interval $J \subset I$. Since the problem is space translation invariant, it suffices to assume that $q$ is the cube with one of the corners being $0=(0,..,0)$.  
We begin with the following commutator identity 
\[
(\frac{x+2it\nabla}{2^{-m+k}})^{N} e^{it\Delta}  \tilde P_k \phi = e^{it\Delta}  (\frac{x}{2^{-m+k}})^N  \tilde P_k \phi.
\]
Then we note that, in $I$ and in the support of $\tilde P_k$, $t \nabla \tilde P_k$ has a kernel whose norm in $L^1_x$ is bounded by $\les 2^{-m+k}$. Thus we obtain
\[
\| (\frac{x}{2^{-m+k}})^N  e^{it\Delta}  \tilde P_k \phi  \|_{\Ec_k} \les_N  \| \tilde P_k \phi \|_{L^2} + \| (\frac{x}{2^{-m+k}})^N  \tilde P_k \phi \|_{L^2}.
\]
Last we note that
\[
\| (\frac{x}{2^{-m+k}})^N  \tilde P_k \phi \|_{L^2} \les_N \| \phi \|_{L^2},
\]
which is a direct consequence of the fact that the kernel of $\tilde P_k$ (which is in $L^1$) is highly concentrated in the region $|x| \les 2^{-k} \les 2^{-m+k}$ (since $m \leq 2k$) and $\phi$ is supported in $q$. As a consequence we obtain
\[
\| (\frac{x}{2^{-m+k}})^N  e^{it\Delta}  \tilde P_k \phi  \|_{E_k} \les_N  \| \tilde P_k \phi \|_{L^2},
\]
from which \eqref{uqconc} follows in this particular case. Next we consider an atom in $U^2$ 
\[
a(t)=\sum_{k=1}^K \mathds{1}_{[t_{k-1},t_{k})}(t) \phi_k , \quad \sum_{k=1}^K\|\phi_k\|_{L^2(\R^n)}^2=1,
\]
where $t_0,t_K \in I$ and $\phi_k$ are supported in $q$.  It is an easy exercise to see that \eqref{uqconc} follows from the previous case (of free solutions). Given the atomic structure of $U^2$, this concludes the proof of \eqref{uqconc} for the $X_{k}^{\geq m}$ component. Lastly, the same localization argument above using commutators can be used to establish
\[
\| Q_{\geq m} u \|_{E_{k,\e}^{\geq m}} \les \|u\|_{U^2_\Delta},  
\]
for any $\e \in \S^{n-1}$. This concludes the proof of the Lemma.

\end{proof}

We can now conclude this section by stating that functions in $U^2_\Delta$ and localized at frequency $2^k$ belong to the $X_{k,m}$ structure. As a consequence of Lemma \ref{LLE2} and Lemma \ref{U2hm2}, we have

\begin{corollary} \label{U2Xc}
 Assume that $u \in U^2_\Delta$ is localized at frequency $2^k$, and $m \in \Z, m \leq 2k$.  Then the following holds true 
\begin{equation} \label{U2Xcest}
 \| u \|_{X_{k,m}} \les \|u\|_{U^2_\Delta}.  
\end{equation}
\end{corollary}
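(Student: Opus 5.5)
The plan is to split the $X_{k,m}$ norm into its three components,
\[
\|u\|_{X_{k,m}} = \|u\|_{V^2_\Delta} + \|u\|_{X_k} + \|Q_{\geq m} u\|_{X_k^{\geq m}},
\]
and bound each one separately by $\|u\|_{U^2_\Delta}$, using results already stated.

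For the first term, I invoke the embedding $U^2_\Delta \subset V^2_\Delta$. This is a direct consequence of $U^2 \subset V^2$ (the case $p=2$ of $U^p \subset V^p$), applied to $e^{-it\Delta}u$, and it gives $\|u\|_{V^2_\Delta} \les \|u\|_{U^2_\Delta}$.

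For the second term, Lemma \ref{LLE2} applies verbatim, since $u$ is localized at frequency $2^k$. It yields $\|u\|_{X_k} \les \|u\|_{U^2_\Delta}$, which covers both the refined mass norm $E_k$ and the local smoothing norms $E_{k,\e}$ for $\e \in V^n$.

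For the third term, Lemma \ref{U2hm2} gives $\|Q_{\geq m}u\|_{X_k^{\geq m}} \les \|u\|_{U^2_\Delta}$ for any $m \in \Z$. It therefore applies in particular when $m \leq 2k$; this restriction was used inside that lemma's proof through the kernel concentration of $\tilde P_k$ at scale $2^{-k} \les 2^{-m+k}$.

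Summing the three bounds gives \eqref{U2Xcest}. There is essentially no obstacle here, since all the analytic work was done in Lemmas \ref{LLE2} and \ref{U2hm2}. The only point needing care is bookkeeping in the $X_k^{\geq m}$ norm, which involves the projections $P_{k,\e}$ on the local smoothing parts. Since $P_{k,\e}$ is a spatial Fourier multiplier, it commutes with $Q_{\geq m}$ and with $e^{it\Delta}$, and it is bounded on $U^2_\Delta$ (via Proposition \ref{Pref} with a single operator). The estimate from Lemma \ref{U2hm2}, which holds for every $\e \in \S^{n-1}$, therefore applies to $P_{k,\e}u$ as well, and taking the maximum over the finite set $V^n$ costs nothing.
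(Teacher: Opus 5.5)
Your proposal is correct and follows the paper's route: the corollary is obtained by combining $U^2_\Delta \subset V^2_\Delta$ with Lemma \ref{LLE2} for the $X_k$ component and Lemma \ref{U2hm2} for the $X_k^{\geq m}$ component. Your extra bookkeeping for $P_{k,\e}$ is harmless but unnecessary, since the $X_k^{\geq m}$ norm bounded in Lemma \ref{U2hm2} already contains the projected local smoothing pieces.
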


\section{Proof of Theorem \ref{MTl}}

In this section we establish the proof of our main linear estimates from Theorem \ref{MTl}. The information provided about $f$ in this Theorem comes in a duality fashion along with some qualitative apriori characterization.  In Lemma \ref{Ldual} we  use the duality to characterize $f$ in a more quantitative way, in that it belongs to a sum of concrete spaces. In Lemma \ref{Pinh} we develop the linear theory where we establish that if $f$ belongs to any of these spaces, then we obtain the corresponding linear  estimates claimed in  \ref{MTl}.

Thus the proof of Theorem \ref{MTl} is broken down into two parts. The first lemma extracts effective information on $f$. 

\begin{lemma} \label{Ldual}
 Assume that  $f \in L^\infty_t L^2_x$ is localized at frequency $\approx 2^{k}$ and that we have the following bound
\begin{equation} \label{fphib}
|\la f, \phi \ra | \leq C \|\phi\|_{X_{k,m}}
\end{equation}
for any $\phi \in X_{k,m}$, where $m \leq 2k$. Then $f \in Y_{k,m}$ and  $\| f \|_{Y_{k,m}} \les C$.
\end{lemma}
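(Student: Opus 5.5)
The plan is the standard Hahn--Banach duality argument for intersections of Banach spaces, realized concretely. The norm on $X_{k,m}$ is the sum of three pieces: $\|\phi\|_{V^2_\Delta}$, $\|\phi\|_{X_k}$ (itself $\|\phi\|_{E_k}$ plus $\max_{\e}\|P_{k,\e}\phi\|_{E_{k,\e}}$), and $\|Q_{\geq m}\phi\|_{X_k^{\geq m}}$. Consider the linear map
\[
J:\phi \mapsto \bigl(\phi,\ \phi,\ (P_{k,\e}\phi)_{\e\in V^n},\ Q_{\geq m}\phi,\ (P_{k,\e}Q_{\geq m}\phi)_{\e\in V^n}\bigr)
\]
into the product space $Z=V^2_\Delta\times E_k\times\prod_\e E_{k,\e}\times E_k^{\geq m}\times \prod_\e E_{k,\e}^{\geq m}$, equipped with the $\ell^1$-sum norm (equivalent to the max up to the fixed constant $|V^n|$). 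By construction $\|J\phi\|_Z\approx\|\phi\|_{X_{k,m}}$, so \eqref{fphib} says that $J\phi\mapsto\la f,\phi\ra$ is a bounded functional of norm $\les C$ on the range of $J$. Hahn--Banach extends it to all of $Z$ with the same norm, giving functionals $\ell_0,\ell_1,\ell_\e,\ell_2,\ell_{2,\e}$ on the respective factors with
\[
\la f,\phi\ra=\ell_0(\phi)+\ell_1(\phi)+\sum_\e \ell_\e(P_{k,\e}\phi)+\ell_2(Q_{\geq m}\phi)+\sum_\e\ell_{2,\e}(P_{k,\e}Q_{\geq m}\phi).
\]

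Next, each functional must be identified with a function in the predicted dual space. For $E_k$, $E_k^{\geq m}$, $E_{k,\e}$ and $E_{k,\e}^{\geq m}$ these are mixed Lebesgue norms with $\ell^2$ or $\ell^\infty$ sums over cubes and intervals. It is safer to restrict to test functions $\phi$ in a nice dense class, for instance $C_0^\infty$ functions localized at frequency $2^k$; this is where the qualitative hypothesis $f\in L^\infty_tL^2_x$ is used to justify density and pairing. On this class, each restricted functional is represented, via the standard duality $(L^\infty L^2)$ versus $(L^1L^2)$ pre-dual and $\ell^\infty/\ell^2$ versus $\ell^1/\ell^2$, by functions $g_1\in F_k$, $g_\e\in F_{k,\e}$, $g_2\in F_k^{\geq m}$ and $g_{2,\e}\in F_{k,\e}^{\geq m}$ with norms $\les C$.

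The main obstacle is exactly this step. The $L^\infty$-type spaces are not reflexive, so the extended functionals a priori live in duals of $L^\infty$, which are finitely additive measures. The way around it is to apply Hahn--Banach not in $E_k$ itself but in the closed subspace of $E_k$ (and similarly for the other $E$-type spaces) where $\phi$ is continuous and decays. The dual of such a $c_0$-type space of continuous functions is a space of measures in the variable carrying the sup, with values in $L^2$ of the remaining variables. Mollifying with $\tilde P_k$ (respectively $\tilde P_{k,\e}$), which is harmless because $\phi=\tilde P_k\phi$, converts these measures into genuine $L^1$ functions in the sup variable with the same norm bound. Similarly, $\ell_0$ on $V^2_\Delta$ yields, through Theorem \ref{LDUD} (the formal duality $(V^2_\Delta)'\supset DU^2_\Delta$ used in the reverse direction via its predual statement), an element $f_1\in DU^2_\Delta$ with $\|f_1\|_{DU^2_\Delta}\les C$ representing $\ell_0$ on the test class.

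Finally, transpose the projections. Set $f_2=\tilde P_k(g_1+\sum_\e P_{k,\e}g_\e)$, which lies in $Y_k$ with the $F_{k,\e}$ pieces in the required form $\tilde P_{k,\e}(\cdot)$ since the $P_{k,\e}$ are self-adjoint. Set $f_3=\tilde P_kQ_{\geq m}(g_2+\sum_\e P_{k,\e}g_{2,\e})$. We must check that $f_3$ satisfies $Q_{\geq m-10}f_3=f_3$ and that $Q_{\geq m}$ and $\tilde P_k$ act boundedly on $F_k^{\geq m}$ and $F_{k,\e}^{\geq m}$. Boundedness follows because their kernels are $L^1$ bumps at time scale $2^{-m}$ and spatial scale $2^{-k}\le 2^{-m+k}$, with rapidly decaying tails across intervals and cubes, so the $\ell^2$ structure survives. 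Then $\la f-f_1-f_2-f_3,\phi\ra=0$ for all test $\phi$, hence $f=f_1+f_2+f_3$ and $\|f\|_{Y_{k,m}}\les C$.
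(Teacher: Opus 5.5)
Your overall architecture matches the paper's. The Hahn--Banach extension on the product space $Z$ is exactly the identity $(E_1\cap\dots\cap E_m)^*=E_1^*+\dots+E_m^*$ that the paper quotes. The $V^2_\Delta$ component becomes an element of $DU^2_\Delta$ through Theorem \ref{LDUD}. The projections are handled by transposition, where the paper uses analogous orthogonality remarks.

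\textbf{Main gap.} It sits in the step you yourself flag as the main obstacle. Mollifying with $\tilde P_k$ cannot turn the functionals on $E_k$ and $E_k^{\geq m}$ into elements of $F_k$ and $F_k^{\geq m}$.
\begin{itemize}
\item $\tilde P_k$ is a Fourier multiplier in $x$ only, while in these two norms the supremum is taken in $t$.
\item The objects representing the functionals on $C(\bar I;L^2(q))$ are $L^2_x$-valued measures in $t$, and $\tilde P_k$ leaves them measures in $t$. For instance, $\tilde P_k(\delta_{t=t_0}\otimes\psi)=\delta_{t=t_0}\otimes\tilde P_k\psi$ does not lie in $L^1_tL^2_x$, which is what $F_k$ and $F_k^{\geq m}$ are built on.
\item Your device does work for $E_{k,\e}$ and $E_{k,\e}^{\geq m}$. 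There the supremum variable is $x_\e$, and $\tilde P_{k,\e}$ really is a convolution in $x_\e$.
\end{itemize}
The paper gets time integrability from the qualitative hypothesis $f\in L^\infty_tL^2_x\subset L^1_{t,loc}L^2_x$. It uses the principle that a bounded functional on $\ell^\infty$ which is a priori a sequence lies in $\ell^1$. You cite this hypothesis only for ``density and pairing'' and never use it at the one place where it is needed.

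This hypothesis constrains $f$, not the individual Hahn--Banach summands, so it does not transfer to them automatically. The singular-in-$t$ parts also cannot simply be pushed into $DU^2_\Delta$. A uniform bound of that kind, applied to Riemann sums of Dirac masses approximating $L^1_t$ densities, would give $\|\phi\|_{E_k}\les\|\phi\|_{V^2_\Delta}$ for time-continuous $\phi$. That is precisely the control the paper adds $E_k$ to supply. So you need either a genuine additional argument here, or an enlargement of $F_k$ and $F_k^{\geq m}$ to allow $L^2_x$-valued finite measures in $t$. The latter should be harmless for the Duhamel arguments in Lemma \ref{Pinh}, but it is a different statement.

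\textbf{Two smaller points.}
\begin{itemize}
\item Your reason for the boundedness of $Q_{\geq m}$ on $F_k^{\geq m}$ and $F_{k,\e}^{\geq m}$ is wrong. The space-time kernel of $Q_{<m}\tilde P_k$ is $K_{<m}(t)$ times the kernel of $e^{it\Delta}\tilde P_k$. Since $\|e^{it\Delta}\tilde P_k\|_{L^1_x}\approx (2^{2k}|t|)^{n/2}$ for $|t|\gg 2^{-2k}$, the $L^1_{t,x}$ norm of this kernel is of size $2^{n(2k-m)/2}$, not $O(1)$. The operator is still bounded, but for a different reason: the conjugation $Q_{<m}=e^{it\Delta}P_{<m}(D_t)e^{-it\Delta}$ combined with $L^2$-based commutator estimates giving approximate localization at scale $2^{-m}\times 2^{-m+k}$. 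In the $\e$ frames one uses the lateral operators $Q^{\e,-}_j$ instead. The paper carries this out in the proof of Lemma \ref{Pinh}.
\item Nonzero $C_0^\infty$ functions cannot be localized in frequency, so your test class is empty. Use $\tilde P_k\phi$ with $\phi\in C_0^\infty$ instead. This suffices because $\la f,\phi\ra=\la f,\tilde P_k\phi\ra$, and Theorem \ref{LDUD} is stated with $C_0^\infty$ test functions.
\end{itemize}
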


The second result establishes the actual linear estimate. 

\begin{lemma}   \label{Pinh}
Consider the inhomogeneous equation 
\begin{equation} \label{LSE}
(i \partial_t + \Delta) u =f, \quad u(0) = 0, 
\end{equation}
where $f \in Y_{k,l}$ is localized at frequency $\approx 2^{k}$ and $l \leq 2k$. Then the following holds true:
\begin{equation}
\sup_{m \leq 2k} \| u \|_{X_{k,m}} \les \|f\|_{Y_{k,l}},
\end{equation}
where we include the case $m=-\infty$ (similarly for $l$) and recall the convention $X_{k,-\infty}= X_k \cap V^2_\Delta$. 
\end{lemma}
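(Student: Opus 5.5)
By linearity of the solution map $f \mapsto u$ and the definition of the $Y_{k,l}$ norm, I would treat the three pieces $f = f_1 + f_2 + f_3$ separately: $f_1 \in DU^2_\Delta$, $f_2 \in Y_k$, and $f_3 \in Y_k^{\geq l}$ with $Q_{\geq l-10} f_3 = f_3$. Since $Y_k = F_k + \sum_\e F_{k,\e}$ and $F_k$ is a sum over time intervals $I \in \mathcal{I}_{-2k}$ of pieces that are $l^2$-summed over cubes, I would reduce $f_2$ further to two atomic cases. The first is $f$ supported in a single $I \times \R^n$ with $(\sum_q \|\chi_q f\|_{L^1_tL^2_x}^2)^{1/2} \le 1$. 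The second is $f = \tilde P_{k,\e} f$ with $2^{-k/2}\|f\|_{L^1_{x_\e}L^2_{t,\bar x_\e}} \le 1$. For $f_3$, the $l^2$ structure over $I \in \mathcal{I}_{-l}$ and $q \in C_{-l+k}$ must be kept and must not be summed in $l^1$. In every case the target is the same: the $V^2_\Delta$ bound, the $X_k$ bound, and the bound for $Q_{\geq m}u$ in $X_k^{\geq m}$, all uniformly in $m \le 2k$.

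For $f_1 \in DU^2_\Delta$, the solution is, up to a free wave fixed by the normalization $u(0)=0$, a function $w \in U^2_\Delta$ with $\|w\|_{U^2_\Delta} \les \|f_1\|_{DU^2_\Delta}$. The difference from the normalized function is a free wave with data of size at most $\|w\|_{L^\infty L^2}$. Corollary \ref{U2Xc} and part i) of Theorem \ref{MTl} (which is Corollary \ref{U2Xc} applied to atoms) then give every $X_{k,m}$ bound at once.

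For $f$ in $F_k$ or in $L^1_tL^2_x$, Duhamel writes $u(t) = -i\int_0^t e^{i(t-s)\Delta}f(s)\,ds$. This is an average of truncated free waves $\mathds{1}_{[s,\infty)} e^{i(t-s)\Delta} f(s)$, each of which is a $U^2_\Delta$ atom up to a sign. Hence $\|u\|_{U^2_\Delta} \les \|f\|_{L^1_tL^2_x}$ by Minkowski's inequality, and Corollary \ref{U2Xc} gives the $X_{k,m}$ bound. For the atomic $F_k$ piece I would improve $l^1$ over cubes to $l^2$ by splitting $f = \sum_q \chi_q f$. Each $u_q$ lies in $U^2_\Delta$. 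For the $V^2_\Delta$ norm I would apply Proposition \ref{Pref}, after using finite speed of propagation on the time scale $2^{-2k}$: within $I$ and its neighbours the pieces $u_q$ are almost orthogonal, concentrated near the double of $q$. After $I$ all the pieces are free waves, whose data $\sum_q \int_I e^{-is\Delta}\chi_q f\,ds$ has $L^2$ norm at most $(\sum_q\|\chi_q f\|^2_{L^1L^2})^{1/2}$ by the same commutator argument as in \eqref{uqconc}. The $E_k$, local smoothing and $X_k^{\geq m}$ bounds are then handled by localization, exactly as in the proof of Lemma \ref{U2hm2}. For the $F_{k,\e}$ piece, Lemma \ref{LEinh} ii) gives the local smoothing bound in every direction and the mass bound, and Lemma \ref{U2lat} gives $V^2_\Delta$. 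The $E_k$ bound I would obtain by duality: the pairing $\langle f, \phi\rangle$ for $\phi$ in the predual $F_k$-type structure reduces to the local smoothing estimate for the adjoint (backward) flow.

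I expect the main obstacle to be $f_3 \in Y^{\geq l}_k$ combined with the bound for $Q_{\geq m}u$ in $X_k^{\geq m}$ when $m$ and $l$ are unrelated. In the regime $m \geq l$ I would exploit that high modulation is elliptic. There $Q_{\geq m}u = (i\partial_t+\Delta)^{-1} Q_{\geq m} f$, with a symbol of size $2^{-m}$ whose kernel in time is concentrated at scale $2^{-m}$. This maps $L^1_t$ on intervals of length $2^{-m}$ into $L^\infty_t$, with a gain $2^{-m}\cdot 2^{m}$ that is harmless, and it preserves the $l^2$ structure over $I\times q$ at the dual scales. The commutator estimates \eqref{uqconc} and the decay in \eqref{U2fd} handle the tails. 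In the regime $m < l$ I would first solve on each block $I\times q$ of scale $2^{-l}\times 2^{-l+k}$. Each local solution is placed in $U^2_\Delta$ with an $l^2$ bound, using the previous step. I would then sum using the orthogonality recorded in Lemma \ref{U2hm} ii) and Proposition \ref{Pref}. This recovers the coarser $l^2$ structure at scale $m$ and also gives $V^2_\Delta$ and $X_k$ control. The delicate point is the free-wave tails after each block. I would control these by the weighted bound \eqref{U2fd}, since $Q_{\geq m}$ applied to a truncated free wave decays rapidly away from the truncation time.
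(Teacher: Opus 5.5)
\textbf{There is a genuine gap: the case $m<l$ for $f_3$.} Your treatment of $f_3\in Y_k^{\geq l}$ puts the difficulty in the wrong place. The argument you give for $m<l$ fails as written, and that regime is also where your $X_k$ bound ($m=-\infty$) comes from.

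\emph{Why your argument fails.} You place each block solution in $U^2_\Delta$ \emph{using the previous step}. But your previous steps (forcing in $F_k$ or $F_{k,\e}$) only give $V^2_\Delta$, via Proposition \ref{Pref} or Lemma \ref{U2lat}. Lemma \ref{U2hm} ii) and Proposition \ref{Pref} also run the wrong way: they split a given $U^2$ function into $l^2$ pieces. They do not sum $l^2$ pieces back into $U^2$; such summation is available only in $V^2$, or in $\tilde U^2$ for pieces with disjoint time supports \eqref{U2int}. Moreover, $Q_{\geq m}$ spreads the tail of each block of length $2^{-l}$ over a window of length $2^{-m}$. That window contains $2^{l-m}$ blocks, so bounding the tails one at a time via \eqref{U2fd} loses $2^{(l-m)/2}$.

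\emph{The missing observation.} Since $Q_{\geq l-10}f_3=f_3$, the difference $u-Q_{\geq l-10}u$ is a free wave, which every $Q_{\geq m}$ annihilates. Hence $Q_{\geq m}u=Q_{\geq l-10}u$ for $m<l-12$. The norms $E_k^{\geq l}$ and $E_{k,\e}^{\geq l}$ are $l^2$ sums over finer blocks of sup-type norms, and the grids nest because $l\le 2k$. So they dominate $E_k^{\geq m}$, $E_{k,\e}^{\geq m}$, $E_k$ and $E_{k,\e}$. The regime $m<l$ and the $X_k$ bound are therefore immediate once $m\approx l$ is done. The free remainder, with data $(Q_{\geq l-10}u)(0)$, is handled by Corollary \ref{U2Xc}.

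\emph{Where the real work is.} The substantive case is $m\geq l-12$. The paper first shows $\|Q_{\geq m}f\|_{F_k^{\geq m}}\les\|f\|_{F_k^{\geq l}}$, using boundedness of $Q_{<m}$ via conjugation and commutators. It then solves on the intervals of $\mathcal{I}_{-m}$. Finally it sums the tails $Q_{\geq m-10}(\chi_{(t_I^r,\infty)}e^{i(t-t_I^r)\Delta}u_I(t_I^r))$ with \eqref{U2fd} and \eqref{U2int}, the time scales now matching. Your elliptic formula is a viable alternative for this case: after conjugation, the kernel of $\rho_{\geq m}(\tau)/\tau$ is bounded and decays rapidly beyond $|t|\approx 2^{-m}$. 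However, you assert rather than check that the fine $E_k$ and $E_{k,\e}$ structures survive propagation over times $2^{-m}\gg 2^{-2k}$.

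\textbf{Further omissions.}
\begin{itemize}
\item You never produce $\|Q_{\geq m}u\|_{X_k^{\geq m}}$ for forcing in $F_{k,\e}$, and you do not treat $F_{k,\e}^{\geq l}$ at all. An $L^1_t\to L^\infty_t$ elliptic bound says nothing about forcing in $L^1_{x_\e}L^2_{t,\bar x_\e}$. The paper first removes modulations above $2^{2k-100}$, using $\|f\|_{L^2_{t,x}}\les\|f\|_{F_{k,\e}}$ (Bernstein in $x_\e$) and $X^{0,\frac12,1}\subset U^2_\Delta$. It then works in the lateral frame, where $x_\e$ plays the role of time and modulation is measured by $Q_j^{\e,-}$.
\item For the $F_k$ piece, the high-modulation bound cannot be copied from Lemma \ref{U2hm2}. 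That proof handles the cutoff $Q_{\geq m}$ through $u\in U^2_\Delta$ and Lemma \ref{U2hm} ii), which you do not have here. The paper instead proves directly that $Q_{<m}$ is bounded on $E_0^{\geq m}$.
\item The $V^2_\Delta$ bound needs no case analysis. Corollary \ref{U2Xc} gives $|\la f,\phi\ra|\les\|f\|_{Y_{k,l}}\|\phi\|_{U^2_\Delta}$, and Theorem \ref{LDUD} then yields $u\in V^2_\Delta$ for all pieces at once.
\end{itemize}
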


It is clear that Theorem \ref{MTl} follows from the above results, modulo the statement in part i); but this has already been established in the previous section in Corollary \ref{U2Xc}, since free solutions belong belong to $U^2_\Delta$. The remainder of this section is dedicated to the proofs of the these two Lemmas. 

\subsection{Proof of Lemma \ref{Ldual}}
In this short section we clarify the use of duality in this paper. Our main goal is to establish the following. With $f$ as in Theorem  \ref{MTl}, we establish that
\begin{equation}
\| f \|_{Y_{k,m}} \les \sup_{\|v\|_{X_{k,m}}=1} |\la v,f\ra | 
\end{equation}
The key step is to clarify the duality between individual components that appear in the definitions of $X_{k,m}$ and $Y_{k,m}$, that is the duality between $E_k$ and $F_k$, the duality between $E_k^{\geq m}$
and $F_{k}^{\geq m}$ etc. Once that is established, we use the standard result that $(E_1 \cap E_2 \cap ... \cap E_m)^*= E_1^*+E_2^*+...+E_m^*$; this can be found in  \cite{Si}, see Theorem 14.9 there. Note that here $E_j$ are meant to be general separated semi-normed spaces, and should not be mistaken for our particular spaces $E_k$.

We begin with explaining the role of apriori information in the use of duality in the simplest setup, that of sequences $(x_n)_{n \in \N}$ and the standard spaces $l^\infty$ and $l^1$. It is well-known fact that the inclusion $l^1 \subset (l^\infty)^*$ is strict, that is $l^1 \ne (l^\infty)^*$; Banach limits are standard examples of elements in $(l^\infty)^*$ which are not in $l^1$. On the other hand, if we have apriori knowledge of the fact that the bounded functional on $l^\infty$ is in fact (representable as) a sequence $x=(x_n), x_n \in \C, \forall n \in \N$, then we can conclude that $x \in l^1$ and
\begin{equation} \label{l1linf}
\|x\|_{l^1} = \sup_{y \in l^\infty: \|y\|_{l^\infty}=1} \la x,y \ra=\sum_{n=1}^\infty x_i y_i. 
\end{equation}
To recap, the key point above is that by testing $x$ with elements in $l^\infty$ suffices to establish that $x \in l^1$ only if we assume some minimal a priori information on $x$. Below, the same principle will be used in the context of Lebesgue spaces.

Similarly the dual of $L^\infty_t L^2_x$ is not $L^1_t L^2_x$; however if we have the apriori information that 
$f \in L^\infty_t L^2_x$ then $f \in L^1_{t,loc} L^2_x$, and $f \in (L^\infty_t L^2_x)^*$ implies that $f \in L^1_t L^2_x$ with
\[
\|f \|_{ L^1_t L^2_x} =  \sup_{g \in L^\infty_t L^2_x: \|g\|_{L^\infty_t L^2_x}=1} \la f,g \ra.  
\]
Now we explain the use of duality between individual components in the structure of $X_{k}^{\geq m}$ and $Y_{k}^{\geq m}$. We begin with the duality between $E_k$ and $F_k$ and recall that
\[
\begin{split}
\| v \|_{E_k} & = \sup_{I \in \mathcal{I}_{-2k}} \left( \sum_{q \in C_{-k}} \| \chi_{I}(t) \chi_{q}(x) v \|^2_{L^\infty_t L^2_x} \right)^\frac12 \\
\| u \|_{F_k} & = \sum_{I \in \mathcal{I}_{-2k}} \left( \sum_{q \in C_{-k}} \| \chi_{I}(t) \chi_{q}(x) u \|^2_{L^1_t L^2_x} \right)^\frac12. 
\end{split}
\]
We claim that, given the additional information $f \in L^\infty_t L^2_x$, the following holds true
\begin{equation} \label{FkEk}
\|f\|_{F_k} \les \sup_{\|v\|_{E_k}=1} |\la f, \phi \ra|. 
\end{equation}
Indeed, for fixed $I \in  \mathcal{I}_{-2k}$ we have the apriori qualitative information $f \in L^1_t L^2_x$, which in particular controls the norm  $\left( \sum_{q \in C_{-k}} \| \chi_{I}(t) \chi_{q}(x) f \|^2_{L^1_t L^2_x} \right)^\frac12$. Moreover, we obtain the bound
\[
\|\chi_I f\|_{F_k} \les \sup_{\|\phi\|_{E_k}=1} |\la \chi_I \phi, \chi_I f \ra|,
\]
which is a consequence of $l^2$ being self-dual. With the apriori knowledge that $\chi_I f \in F_k$, we can now invoke \eqref{l1linf} and obtain \eqref{FkEk}. 

The duality between $E_k^{\geq m}$ and $F_k^{\geq m}$ is justified in a similar manner.

The duality between $E_{k,\e}$ and $F_{k,\e}$ is justified in a similar manner, by noticing that $L^2_{t,x}$ controls $L^1_{x_\e, loc} L^2_{t, \bar x_\e}$ for any $\e$. To justify the insertion of the operators $\tilde P_{k,\e}$
in the definition of $Y_k$ we notice that $\la (1- \tilde P_{k,\e} ) f, P_{k,\e} \phi \ra=0$, thus the $E_{k,\e}$ structure is used only when estimating $\tilde P_{k,\e} f$; a more complete formalization is provided at the end of this argument for the use of modulation localization operators.

The duality between $LE_k^{\geq m}$ and $LF_k^{\geq m}$ is justified in a similar manner.

The last duality is the one between $V^2_\Delta$ and $DU^2_\Delta$ stating that
\[
\| f \|_{DU^2_\Delta} \les \sup_{\|\phi\|_{V^2_\Delta}=1} |\la f, \phi\ra|.
\]
Here we begin with the following observation. Since $f \in L^\infty_t L^2_x \subset L^1_{t,loc} L^2_x $, the solution to the equation $(i \partial_t + \Delta) u =f$ is a $C_t L^2_x$ function (hence ruled function). Then we can apply Lemma \ref{LDUD} to conclude $f \in DU^2_\Delta$ and claim the above estimate.

There is just one more aspect that needs clarification: the use of modulation localization $Q_{\geq m-10} f_3=f_3$ for the component that appears in decomposition of $f$ in the definition of $Y_{k}^{\geq m}$. Consider the low modulation part of $f$, that is $Q_{\leq m-10} f$. We have
\[
\la Q_{\leq m-10}  f, \phi \ra= \la Q_{< m-10} f  , Q_{< m-8} \phi + Q_{\geq m-8} \phi \ra = \la Q_{< m-10} f, Q_{< m-8} \phi \ra. 
\]
As a consequence \eqref{fphib} reads
\[
|\la Q_{< m-10} f, \phi \ra | \les C (\|\phi\|_{V^2_\Delta} + \|\phi\|_{X_k}),
\]
thus no high modulation structure from $X_{k}^{\geq m}$ is being used in controlling $Q_{< m-10} f$. This justifies why, in the sum structure from $Y_{k}^{\geq m}$ we use the high modulation structures to measure precisely high modulation components of $f$. A similar argument justifies the use of the projectors $\tilde P_{k,e}$ for the components measured in $F_{k,\e}$. 

This concludes the proof of Lemma \ref{Ldual}. 

\subsection{Proof of Proposition \ref{Pinh}} 

Given the the complexity of the space $Y_{k,l}$, to which $f$ belongs to, we split the argument in several parts, each corresponding to replacing $f$ with one of the elements in the definition of $Y_{k,l}$. 

But first we make a general observation. As a consequence of Corollary \eqref{U2Xc} and the structure of the space $Y_{k,l}$ we obtain that
\[
|\la f, \phi \ra| \les \|f\|_{Y_{k,l}} \| \phi \|_{U^2_\Delta}. 
\]
Then we invoke Lemma \ref{LDUD} to obtain the estimate in $V^2_\Delta$ for $u$. This is why regardless of which component of $Y_{k,l}$ we consider below, we obtain the $V^2_\Delta$ estimate for $u$ and we will not repeat the argument for the estimate in $V^2_\Delta$. 

\subsection{Forcing in $DU^2_\Delta$.} As already stated in Lemma \ref{LDUD}, the solution $u \in U^2_\Delta$
with the appropriate bound. Then Corollary \eqref{U2Xc} supplies the desired bound. 

\subsection{Forcing in $F_{k}$.} Here $f$ is assumed to be localized at frequency $2^k$ and we seek to prove that the solution of \eqref{LSE} satisfies the estimate
\begin{equation}
\| u \|_{X_{k,m}} \les \|f\|_{F_k},
\end{equation}
for any $m \leq 2k$. By rescaling it suffices to consider the case $k=0$. Given the $l^1$ structure across time intervals of $F_{0}$, it suffices to consider the case when $f$ is supported in the time interval $[0,1]$; this is equivalent to replacing $f$ with $\chi_{[0,1]}(t) f$. This also implies that on the interval $(-\infty,0)$ the solution $u$ is identical to $0$, while on the interval $[1,\infty)$ the solution is free in the sense 
$u(t)=e^{i(t-1)\Delta} u(1), t \geq 1$. From the arguments below we obtain that $\|u(1)\|_{L^2_x} \les \|\chi_{[0,1]} f \|_{F_{0}}$ (see \eqref{V2b} below) and thus $\chi_{[1,+\infty]} u \in U^1_\Delta$. Using the fact that  $U^1_\Delta \subset U^2_\Delta \subset V^2_\Delta$, and invoking \eqref{U2Xcest}, gives the bound
\[
\| \chi_{[1,+\infty)} e^{i(t-1)\Delta} u(1) \|_{X_{k,m}} \les \|f\|_{F_k}, \quad \forall m \leq 2k.
\]
Thus it remain to focus  on the interval $[0,1]$. Here we have
\[
u= \sum_{q \in C_0} u_q, \quad  u_q= \int_0^t e^{i(t-s)\Delta} \tilde P_0(\chi_q f(s)) ds, 
\]
where $\tilde P_0$ can be easily inserted in the Duhamel formula based on the localization properties of the solution; we do so because $\chi_q f$ loses the localization at frequencies $\approx 1$.

\vspace{.1in}

\bf The bound in $E_0$. \rm Each $u_q$ solves the corresponding Schr\"odinger equation with forcing $f_q= \chi_q f \in L^1_t L^2_x$, hence the corresponding solution $u_q \in V^1_\Delta$. More precisely we have
\[
e^{-it\Delta} u_q = \int_0^t e^{-is\Delta} \tilde P_0 (\chi_q f(s)) ds,  
\]
and one can easily see that, for any $q,q' \in C_0$,
\begin{equation} \label{uqdec}
\| \chi_{q'} e^{-it\Delta} u_q \|_{V^1} \les d(q',q) \ra^{-N} \| f_q \|_{L^1L^2};
\end{equation}
indeed for $|s| \les 1$, the kernel of the operator $P_0 e^{-is\Delta}$ records fast decay away from the region $|x| \les 1$. 
From this we obtain 
\begin{equation} \label{U1b}
\left( \sum_{q' \in C_0} \| \chi_{q'} e^{-it\Delta} u \|_{V^1}^2 \right)^\frac12 \les \left( \sum_{q \in C_0} \| f_q \|_{L^2}^2 \right)^\frac12. 
\end{equation}
This implies the corresponding bound with $V^1$ replaced $V^2$. Finally, on behalf of Proposition \ref{Pref} we obtain
\begin{equation} \label{V2b}
\|e^{-it\Delta} u\|_{V^2} \les \|\chi_{[0,1]} f \|_{F_{0}}. 
\end{equation}
From \eqref{U1b} we obtain
\[
\left( \sum_{q' \in C_0} \| \chi_{q'} e^{-it\Delta} u \|_{L^\infty_t L^2_x}^2 \right)^\frac12 \les \|f\|_{F_{0}},
\]
which, when combined with the properties of the kernel of $P_0$ (fast decay away from the region $|x| \les 1$) leads to the bound in $E_{0}$: $\| \chi_{[0,1]} u \|_{E_0} \les \|f\|_{F_{0}}$.

\bf The bound in $E_{0,\e}$. \rm Here the analysis is carried for any $\e \in S^{n-1}$, not just those in $V^n$. We start by computing 
\[
(i \partial_t + \Delta) (\tilde \chi_q u)= \tilde \chi_q f + 2 \nabla \tilde \chi_q \cdot \nabla u + \Delta \tilde \chi_q \cdot u;
\]
recall $\tilde \chi_q$ is a bump function adapted to $q$, and with that $\tilde \chi_q =1$ in $q$ and $=0$ in the $(2q)^c$. 
Using \eqref{uqdec}, we obtain the following estimate
\begin{equation} \label{aux51}
\|  \tilde \chi_q f + 2 \nabla \tilde \chi_q \cdot \nabla u + \Delta \tilde \chi_q \cdot u \|_{L^1_t L^2_x} \les \sum_{q' \in C_0} \la d(q,q') \ra^{-N} \| f_{q'} \|_{L^1_t L^2_x}, 
\end{equation}
from which, by using Lemma \ref{LEinh}, we obtain that for any $\e \in \S^{n-1}$ the following holds true
 \begin{equation} \label{aux1}
 \|P_{0,\e} (\tilde \chi_q u) \|_{L^\infty_{x_\e} L^2_{t,\bar x_\e}} \les \sum_{q' \in C_0} \la d(q,q') \ra^{-N} \| f_{q'} \|_{L^1_t L^2_x}. 
 \end{equation}
 We can modify the functions $\tilde \chi_q$ so that they form a partition of unity in $\R^n$ which gives
 \[
\sum_{q \in C_0} P_{0,\e} (\tilde \chi_q u) = P_{0,\e} u.
 \]
 The final piece is the following estimate
 \begin{equation} \label{leqconc}
 \|P_{0,\e} (\tilde \chi_q u) \|_{L^\infty_{x_\e} L^2_{t,\bar x_\e}(q'' \times I)} \les \la d(q,q'') \ra^{-N} \sum_{q' \in C_0} \la d(q,q') \ra^{-N} \| f_{q'} \|_{L^1_t L^2_x},
\end{equation}
Assuming \eqref{leqconc}, we can now close the argument as follows
\[
\begin{split}
 \|P_{0,\e} u \|_{L^\infty_{x_\e} L^2_{t,\bar x_\e}}^2 & \les  \sum_{q \in C_0} \|P_{0,\e} u \|_{L^\infty_{x_\e} L^2_{t,\bar x_\e}(q \times I)}^2 
  \les  \sum_{q \in C_0} \left( \sum_{q'' \in C_0} \|P_{0,\e} (\chi_{q''} u) \|_{L^\infty_{x_\e} L^2_{t,\bar x_\e}(q \times I)} \right)^2 \\
 & \les  \sum_{q \in C_0} \left( \sum_{q'' \in C_0}  \la d(q,q'') \ra^{-N} \sum_{q' \in C_0} \la d(q,q') \ra^{-N} \| f_{q'} \|_{L^1_t L^2_x} \right)^2 \\ 
 & \les \sum_{q' \in C_0} \| f_{q'} \|_{L^1_t L^2_x}^2 \les \|f\|_{F_0}^2. 
\end{split}
\]
To establish \eqref{leqconc}  we use multipliers, just as in the proof of Lemma \ref{U2hm2}. Using the space translation invariance properties of the flow, it suffices to consider the case when $q \in C_0$ is one of the unit cubes touching the origin.  We apply the vector field $(x+2it\nabla)^N$ to the equation to obtain
\[
(i \partial_t + \Delta) (x+2it\nabla)^N (\tilde \chi_q u)= (x+2it\nabla)^N \left( \tilde \chi_q f + 2 \nabla \tilde \chi_q \cdot \nabla u + \Delta \tilde \chi_q \cdot u \right) := \tilde f_{q,N}. 
\]
Given all localizations in place, including $|t| \leq 1$,  and using \eqref{aux51} we obtain
\begin{equation} \label{fqN}
\|  \tilde f_{q,N} \|_{L^1_t L^2_x} \les_N \sum_{q' \in C_0} \la d(q,q') \ra^{-N} \| f_{q'} \|_{L^1_t L^2_x}.
\end{equation}
We apply the projector $P_{0,\e}$ to the equation to obtain
\[
\begin{split}
 & P_{0,\e} (i \partial_t + \Delta) (x+2it\nabla)^N (\tilde \chi_q u)  \\
 = & (i \partial_t + \Delta)  (x+2it\nabla)^N P_{0,\e} (\tilde \chi_q u)  +  (i \partial_t + \Delta)  [P_{0,\e}, (x+2it\nabla)^N ] (\tilde \chi_q u),
\end{split}
\]
from which it follows 
\[
 (i \partial_t + \Delta)  (x+2it\nabla)^N P_{0,\e} (\tilde \chi_q u)  = P_{0,\e}  \tilde f_{q,N} - (i \partial_t + \Delta)  [P_{0,\e}, (x+2it\nabla)^N ] (\tilde \chi_q u).
\]
Let $N=1$. We have the exact commutator identity
\[
[P_{0,\e}, (x+2it\nabla) ]= [P_{0,\e}, x ]= i (\nabla_\xi p_{0,\e})(D),
\]
where $p_{0,\e}$ is the symbol of $P_{0,\e}$. Thus 
\[
(i \partial_t + \Delta)  (x+2it\nabla) P_{0,\e} (\tilde \chi_q u) = P_{0,\e} \tilde f_{q,1} -  i (\nabla_\xi p_{0,\e})(D) \tilde f_{q,0}. 
\]
The kernel of $(\nabla_\xi p_{0,\e})(D)$ is similar to the kernel of $P_{0,\e}$ (bump function adapted to the cube $|x| \leq 1$) and, combined with \eqref{fqN}, this allows us to claim
 \[
\|   P_{0,\e} \tilde f_{q,1} \|_{L^1_t L^2_x}  + \|   i (\nabla_\xi p_{0,\e})(D) \tilde f_{q,0}  \|_{L^1_t L^2_x} \les_N \sum_{q' \in C_0} \la d(q,q') \ra^{-N} \| f_{q'} \|_{L^1_t L^2_x}.
\]

By using Lemma \ref{LEinh}, we obtain that for any $\e \in \S^{n-1}$ the following holds true
 \[
 \| (x+2it\nabla) P_{0,\e} ( \tilde \chi_q u) \|_{L^\infty_{x_\e} L^2_{t,\bar x_\e}} \les \sum_{q' \in C_0} \la d(q,q') \ra^{-N} \| f_{q'} \|_{L^1_t L^2_x}. 
\]
Given the localizations in place we already have
\[
 \| 2it\nabla P_{0,\e} ( \tilde \chi_q u) \|_{L^\infty_{x_\e} L^2_{t,\bar x_\e}} \les \| \nabla P_{0,\e} ( \tilde \chi_q u) \|_{L^\infty_{x_\e} L^2_{t,\bar x_\e}}  \les \sum_{q' \in C_0} \la d(q,q') \ra^{-N} \| f_{q'} \|_{L^1_t L^2_x}. 
 \]
where we use the identity $\nabla P_{0,\e} = \nabla \tilde P_{0,\e}  P_{0,\e}$, the properties of the kernel of $\nabla \tilde P_{0,\e} $ and \eqref{aux1}. From here we conclude with 
 \[
 \| x P_{0,\e} ( \tilde \chi_q u) \|_{L^\infty_{x_\e} L^2_{t,\bar x_\e}} \les \sum_{q' \in C_0} \la d(q,q') \ra^{-N} \| f_{q'} \|_{L^1_t L^2_x}. 
\]
which gives \eqref{leqconc} for $N=1$; here we also use  \eqref{leqconc} for $N=0$. We can then reiterate the process and obtain it for all $N \in \N$. The reason we do not do this in one step is due to the fact that the $[P_{0,\e}, (x+2it\nabla)^N]$ is more complicated and involves lower order terms (relatively to the main one $x^N$) which are estimated inductively.

\vspace{.1in}

\bf The bound in $E_0^{\geq m}$. \rm We note that from the above estimates we already have
\begin{equation} \label{aux6}
\| \chi_{[0,1]} u \|_{E^{\geq m}_{0}} \les \|f\|_{F_0}. 
\end{equation}
Indeed, the solution is supported in $[0,1] \subset [0,2^{-m}]$, thus the $l^2$ summability with respect to the intervals $I \in \mathcal{I}_{-m}$ is trivial. We already have $l^2$ summability with respect to $q \in C_0$ which provides the $l^2$ summability with respect to $q \in C_{-m}$. Now we have to show that this is stable under the modulation cut-off. 

We first dispose of the particular case $ - 100 \leq m \leq 0$. Here $Q_{\geq m}= I - Q_{< m}$ and the kernels of $Q_{< m} P_{0}$  and $Q_{< m} P_{0,\e}$ are easily shown to belong to $L^1_{t,x}$, which, using \eqref{aux6}, establishes
\begin{equation} \label{dhm}
\| Q_{\geq m} \chi_{[0,1]} u \|_{E^{\geq m}_{0}} + \sup_{\e \in \S^{n-1}} \| Q_{\geq m} \chi_{[0,1]} u \|_{E^{\geq m}_{0,\e}}  \les \|f\|_{F_0}.
\end{equation}

For general $m \leq -100$, our goal is to provide an estimate for $Q_{< m} \chi_{[0,1]}(t) u$ in $E^{\geq m}_0$. We have already established $\| \chi_{[0,1]}(t) u \|_{E_0^{\geq m}} \les  \|f\|_{F_0}$, thus it suffices to establish one the following more general equivalent bounds:
\begin{equation} \label{QbEm}
\| Q_{< m} u \|_{E_0^{\geq m}} \les \| u \|_{E_0^{\geq m}}, \quad \| Q_{\geq m} u \|_{E_0^{\geq m}} \les \| u \|_{E_0^{\geq m}},
\end{equation}
for any $u \in E_0^{\geq m}$ which is localized at frequency $\approx 1$. The two are obviously equivalent since $Q_{\geq m} = I - Q_{< m}$. \eqref{QbEm} is a consequence of the following claim
\begin{equation} \label{aux10b}
\|  \chi_I Q_{< m}  \chi_{I'} u \|_{E_0^{\geq m}}  \les_N \la 2^m d(I,I') \ra^{-N}  \| \chi_{I'} u \|_{E_0^{\geq m}},
\end{equation}
for any $I,I' \in \mathcal{I}_{-m}$. The rest of the argument is dedicated to establishing \eqref{aux10b}. We establish the above for the particular case $I'=I_0 \in \mathcal{I}_{-m}$ (recall that $0 \in I_0$) and begin with the identity  
\begin{equation} \label{aux5}
 Q_{< m} \chi_{I_0} u = e^{it\Delta} P_{< m} (D_t) e^{-it\Delta} \chi_{I_0} u =  e^{it\Delta} P_{< m} (D_t) e^{-it\Delta} \chi_{I_0} u. 
\end{equation}
Note that if, in \eqref{aux10b}, we worked with a different $I'$ (than $I_0$), then we would pick $t_{I'} \in I'$ and conjugate the above with $e^{\pm i(t-t_{I'})\Delta}$. The operator $e^{\pm it\Delta}$  is bounded on $L^\infty_t L^2_x$; however we need to preserve the more refined structures in $E^{\geq m}_0$.  Using commutator estimates, 
\begin{equation} \label{aux66}
\|  e^{\pm it\Delta} P_0(D_x) \chi_q \chi_{I_0} u \|_{L^\infty_t L^2_x(q')} \les \la 2^m d(q,q') \ra^{-N} \| \chi_q \chi_{I_0} u \|_{L^\infty_t L^2_x}, 
\quad \forall q,q' \in C_{-m},
\end{equation}
which allows us to conclude 
\[
\| e^{-it\Delta}  \chi_{I_0} u \|_{E_0^{\geq m}} \les \| \chi_{I_0} u \|_{E_0^{\geq m}}.
\]
The kernel $K_{< m}$ of $P_{< m} (D_t)$ satisfies the bound $|K_{< m}(t)| \les 2^{m} \la 2^m t \ra^{-N}$, thus 
\begin{equation} \label{aux8}
\| \chi_I P_{< m} (D_t) e^{-it\Delta}  \chi_{I_0} u \|_{E_0^{\geq m}} \les \la 2^m d(I,I_0) \ra^{-N}  \| \chi_{I_0} u \|_{E_0^{\geq m}}.
\end{equation}
Now let $I=I_0$ in \eqref{aux10b}. Just as earlier, using commutator allows us to claim
 \begin{equation} \label{aux6666}
\| \chi_{I_0} e^{-it\Delta} P_0(D_x) \chi_q v \|_{L^\infty_t L^2_x(q')} \les \la 2^m d(q,q') \ra^{-N} \| \chi_q v \|_{L^\infty_t L^2_x}, 
\quad \forall q,q' \in C_{-m},
\end{equation}
which combined with \eqref{aux8} gives \eqref{aux10b} for $I=I_0$. 

Now we consider general $I$ (not necessarily $=I_0$), where we have a weaker version of \eqref{aux6666}. The basic idea is that within $I$ the flow $e^{it\Delta}$ mixes the mass between various cubes $q \in C_{-m}$, thus booking a polynomial loss (depending on the dimension only) in $\la 2^m d(I,I_0) \ra$; but this can be compensated using the very fast decay from \eqref{aux8}.

We let $d \in \N$ such that $ \la 2^m d(I,I_0) \ra \approx 2^d$. Using commutators just as above, we obtain
\[
\| \chi_{I} e^{-it\Delta} \tilde P_0(D_x) \chi_q v \|_{L^\infty_t L^2_x(q')} \les \la 2^{m-d} d(q,q') \ra^{-N} \| \chi_q v \|_{L^\infty_t L^2_x(q')}, 
\quad \forall q,q' \in C_{-m+d}.
\]
This provides the weaker estimate 
\[
\| \chi_I e^{-it\Delta} \tilde P_0(D_x) v \|_{E_0^{\geq m-d}} \les \|v\|_{E_0^{\geq m-d}} \les \|v\|_{E_0^{\geq m}}. 
\]
To recover the desired estimate we proceed as follows
\[
\| \chi_I e^{-it\Delta} \tilde P_0(D_x) v \|_{E_0^{\geq m}} \les 2^{nd} \| \chi_I e^{-it\Delta} \tilde P_0(D_x) v \|_{E_0^{\geq m-d}}  \les 2^{nd} \|v\|_{E_0^{\geq m}}. 
\]
We let $v=\chi_I P_{< m} (D_t) e^{-it\Delta}  \chi_{I_0} u$, invoke \eqref{aux8} and \eqref{aux5} to obtain \eqref{aux10b}; note that here we need to take $N$ large in \eqref{aux8} so that we compensate for the loss $2^{nd}$.

\vspace{.1in}

\bf The bound in $E_{0,\e}^{\geq m}$. \rm  To access the local smoothing estimates, we rewrite the modulation information in the lateral coordinates and proceed as above. We fix $\e \in \S^{n-1}$ and work with $P_{0,\e} \chi_{[0,1]} u$ below. In the support of $P_{0,\e}$ we have that $ 2^{-1} \leq |\xi \cdot \e| \leq 2$, and the exact sign of $\xi \cdot \e$ plays a role in the argument below. We can further assume that we localize in the region  $ 2^{-1} \leq \xi \cdot \e \leq 2$; the argument in the region $2^{-1} \leq - \xi \cdot \e \leq 2$ is similar. 

We have already disposed of the very high modulations, that is when $m \geq -100$ in \eqref{dhm}.  We now consider the case $m < -100$; moreover since the structure in $E_{0,\e}^{\geq -100}$ is stronger than the one in $E_{0,\e}^{\geq m}$ and we have \eqref{dhm}, it suffices to establish the estimate for $Q_{\geq m} \chi_{[0,1]} u - Q_{\geq -100} \chi_{[0,1]} u$.

In the Fourier variables we let $\xi_\e=\xi \cdot \e \in \R$ and $\xi \cdot \e^\perp=\bar \xi_e \in \R^{n-1}$. We write the symbol
\[
\tau-\xi^2= \tau - \xi^2_e -\bar \xi_\e^2.
\]
Since $2^m \les |\tau-\xi^2| \leq 2^{-10}$, this takes the form $2^m \les |\xi_\e^2 - (\tau - \bar \xi_e^2)| \leq 2^{-10}$ and it has two implications. First since $ 2^{-2} \leq \xi_\e^2 \leq 2^2$, we obtain that $ 2^{-3} \leq \tau - \bar \xi_e^2 \leq 2^3$ (in particular it is positive). Second,  $2^{-2} \leq  \xi_\e + \sqrt{\tau - \bar \xi_\e^2} \leq 2^4$ and we obtain that  $|\xi_\e - \sqrt{\tau - \bar \xi_\e^2}| \geq 2^{m-6}$; this corresponds to the fact that the modulation in the "$\e$ frame" is bounded from below.  To make this precise we define the following modulation operator
\[
Q_j^{\e,-} = \rho_j(\xi_\e - \sqrt{\tau - \bar \xi_\e^2}). 
\]
which is adapted to the flow 
\begin{equation} \label{SEL}
(i \partial_{x_\e} + a(D_t, D_{\bar x_e})) u =0, \quad a(\tau,\bar \xi)= \sqrt{\tau - \bar \xi_\e^2},
\end{equation}
where the above is to be read as follows:
\[
(a(D_t, D_{\bar x_\e}) f)(t,x_\e) = \frac1{(2\pi)^n} \int e^{i(t,x_\e) \cdot (\tau,\bar \xi_\e)}  a(\tau,\bar \xi) \hat f(\tau,\bar \xi_\e) d \tau d \bar \xi_\e.
\]
Next we write
\[
Q_{\geq m} - Q_{\geq -100}  = Q^{\e,-}_{\geq  m}  - Q^{\e,-}_{\geq -100} + Q_{\geq m} -  Q^{\e,-}_{\geq m} + Q^{\e,-}_{\geq -100} - Q_{\geq -100}.
\]
We can add the operator $Q_{\geq m-10} - Q_{\geq -90}$, so below we implicitly assume that we work  in the range $2^{m-12} \leq |\tau-\xi^2| \leq 2^{-80}$. 

The term $Q_{\geq m} -  Q^{\e,-}_{\geq m}$ localizes in the region $|\tau-\xi^2| \approx 2^m$ and thus we have
\[
\| (Q^{\Delta}_{\geq m} -  Q^{\e,-}_{\geq m}) \chi_{[0,1]} u \|_{U^2_\Delta} \les \| (Q^{\Delta}_{\geq m} -  Q^{\e,-}_{\geq m}) \chi_{[0,1]} u \|_{V^2_\Delta} 
\]
on behalf of the inclusion $V^2_\Delta \subset \dot X^{0,\frac12,\infty}$, the fact that we deal with a bounded range of modulations and the boundedness of the operator $Q^{\Delta}_{\geq m} -  Q^{\e,-}_{\geq m}$ on $L^2$. Using \eqref{U2Xcest} provides the desired estimate for this term. 

The term $Q^{\e,-}_{\geq -100} - Q_{\geq -100}$ is the easiest. We dispose of $Q^{\e,-}_{\geq -100} P_{0,\e} \chi_{[0,1]} u$ in a similar way to how we did for $Q_{\geq -100} \chi_{[0,1]} u$ in \eqref{dhm}.

The term $Q^{\e,-}_{\geq  m}  - Q^{\e,-}_{\geq -100}$ is the nontrivial one. Here we emulate the same argument as before, just that we work in the new frame. First we adapt the definitions of $\mathcal{I}_{k}$ and $C_{k}$ to the new coordinates: $\mathcal{I}_{k}^{\e}$ is the set of intervals of length $2^k$ in the direction of $\e$, while $C_{k}^{\e}$ is the set of cubes of size $2^k$ in the hyperplane spanned by $\e^\perp$ and the time direction.

We also define the norm
\[
\| v \|_{E_{0,\e}^{\geq m}} =\left( \sum_{I \in \mathcal{I}_{-m}^{\e}, q \in C_{-m}^{\e}} \| v  \|^2_{L^\infty_{x_\e} L^2_{t, \bar x_\e}(I \times q)} \right)^\frac12. 
\]
With this notation, our control is of the form
\[
\left( \sum_{I \in \mathcal{I}_{0}^{\e}, q \in C_{0}^{\e}} \| \chi_{[0,1]}(t) P_{0,\e} u \|^2_{L^\infty_{x_\e} L^2_{t, \bar x_\e}(I \times q)} \right)^\frac12  \les \|f\|_{F_0},
\]
which is a consequence of \eqref{leqconc}.   The above estimate provides control at scale $2^{-m}$, that is  
\begin{equation} \label{aux65}
\| \chi_{[0,1]}(t) P_{0,\e} u  \|_{E_{0,\e}^{\geq m}}   \les \|f\|_{F_0}.
\end{equation}
Our task becomes then to establish the equivalent of \eqref{QbEm}:
\begin{equation} \label{QbEml}
\| Q^{\e,-}_{<  m} u \|_{E_{0,\e}^{\geq m}} \les \| u \|_{E_{0,\e}^{\geq m}}, 
\end{equation}
for any $u \in E_{0,\e}^{\geq m}$ which is localized at frequency $\approx 1$, in the sense that $|(\tau,\xi'_\e)| \approx 1$ in the support of its Fourier transform; it is clear that this provides the desired estimate in the $\e$ frame. 

To prove \eqref{QbEml} we start with the equivalent of \eqref{aux5}
\[
Q_{\geq m}^{\e,-} \chi_{I} u = e^{i (x_\e-c(I)) a(D_t,D_{\bar x_{\e}})} P_{\geq m} (D_{x_\e}) e^{-i (x_\e-c(I)) a(D_t,D_{\bar x_{\e}})}   \chi_I u,
\]
where $I \in \mathcal{I}_{-m}^\e$ and $c(I)$ is any point in $I$. Associated to the flows $ e^{\pm i (x_\e-c(I)) a(D_t,D_{\bar x_{\e}})} $, we also have the corresponding commutators:
\[
[(i \partial_{x_\e} + a(D_t, D_{\bar x_\e})), (t,\bar x_{\e}) + x_\e (\nabla_{\tau,\bar \xi_{\e}} a) (D_t, D_{\bar x_\e}))]=0. 
\]
Using the multiplier $(t, \bar x_{\e}) \pm x_\e (\nabla_{\tau, \bar \xi_{\e}} a) (D_t, D_{\bar x_\e}))$ instead of $x \pm 2it \nabla$, a similar argument to how we obtained \eqref{aux10b} gives the following estimate
\[
\|  \chi_{I}  Q_{\geq m}^{\e,-}  \chi_{I'} u \|_{E_{0,\e}^{\geq m}}  \les \la 2^m d(I,I') \ra^{-N}  \| \chi_{I'} u \|_{E_{0,\e}^{\geq m}},
\]
for any $I,I' \in \mathcal{I}_{-m}^{\e}$. From this one obtains \eqref{QbEml} and this concludes the bound in  
$E_{0,\e}^{\geq m}$.

\subsection{Forcing in $F_k^{\geq l}$} 

After rescaling, we can assume that $k=0$.  Recall that we seek to recover the $X_{k,m}$ structure for all $m \leq 2k$. We first set $m=l$, recover this structure and then we explain how the other structures (for $m \ne l$) are recovered. 

We set $m=l$. Our forcing $f \in F_0^{\geq l}$ has the additional property that $Q_{\geq m-10} f =f$. We also note that $Q_{\geq m-10} u$ solves the same equation
\[
(i \partial_t + \Delta) Q_{\geq m-10} u =f,
\]
but with, potentially, a different initial data. This means that $u - Q_{\geq m-10} u$ solves the free Schr\"odinger equation, thus $u - Q_{\geq m-10} u= e^{it\Delta} f, f(x) = (u - Q_{\geq m-10} u)(0,x)$.  Our theory below provides the estimate
\begin{equation} \label{aux72}
\| (Q_{\geq m-10} u) (0,x) \|_{L^2_x} \les \| f \|_{F_0^{\geq m}},
\end{equation}
thus it suffices to establish the desired estimates for $Q_{\geq m-10} u$. 

We  continue with the analysis of the inhomogeneous equation by decomposing $f$ as follows:
\[
f = \sum_{I \in \mathcal{I}_{-m}} \chi_I f, \quad \sum_{I \in \mathcal{I}_{-m}} \sum_{q \in C_{-m}}  \|\chi_I \chi_q f \|^2_{L^1_t L^2_x} \les \|f\|_{F_0^{\geq m}}^2. 
\]
For fixed $I \in \mathcal{I}_{-m}$ we let $f_I=  \chi_I f$ and $u_I$ be the forward in time solution to the equation
\begin{equation} \label{SEFhm}
(i \partial_t + \Delta) u_I = f_I, \quad  u(t^{l}_{I},x)= 0.
\end{equation} 
For fixed $I \in  \mathcal{I}_{-m}$, we record the following estimate
\begin{equation} \label{aux71}
 \la 2^m d(I,I') \ra^{N} \| \chi_{I'} Q_{\geq m-10} (\chi_I u_I)  \|_{E_0^{\geq m}} \les \left( \sum_{q \in C_{-m}}  \|\chi_I \chi_q f \|^2_{L^1_t L^2_x} \right)^\frac12,
\end{equation}
which, in the absence of the modulation projector $Q_{\geq m-10}$,  is a consequence of the speed of propagation being $\approx 1$ and the size restriction of the evolution time interval being $2^{-m}$. Technically it is established in a similar manner as \eqref{aux10b} by using commutators. 

As a consequence of \eqref{aux71} we have the following estimate
\begin{equation} \label{aux70}
\| u_I (t_I^r)\|_{L^2_x} \les \left( \sum_{q \in C_{-m}}  \|\chi_I \chi_q f \|^2_{L^1_t L^2_x} \right)^\frac12,
\end{equation}
Then we produce a particular solution to the original inhomogeneous equation:
\[
u = \sum_{I \in \mathcal{I}_{-m}} \chi_I u_I +  \chi_{(t^r_I,+\infty)}  e^{i(t-t_I^r)\Delta} u_I(t_I^r). 
\]
This is a formal object which cannot be meaningfully defined unless the sum $\sum_{I \in \mathcal{I}_{-m}}$ has finitely many terms. What we can make sense of is instead
\[
Q_{\geq m-10} u = \sum_{I \in \mathcal{I}_{-m}} Q_{\geq m-10}( \chi_I u_I) +  Q_{\geq m-10}(\chi_{(t^r_I,+\infty)}  e^{i(t-t_I^r)\Delta} u_I(t_I^r)).
\]
From \eqref{U2fd} it follows that for each $I \in \mathcal{I}_{-m}$ we have
\[
\sum_{I' \in \mathcal{I}_{-m}}  \la 2^{m} d(I',I)\ra^N \| \chi_{I'}   Q_{\geq m-10}(\chi_{(t^r_I,+\infty)}  e^{i(t-t_I^r)\Delta} u_I(t_I^r)) \|_{U^2_\Delta} \les_N \| u_I(t_I^r)) \|_{L^2} \les \| f_I \|_{F_0^{\geq m}}. 
\]
Based on \eqref{U2int} and \eqref{aux70} we conclude with
\[
\begin{split}
& \| Q_{\geq m-10} \sum_{I \in \mathcal{I}_{-m}} \chi_{(t^r_I,+\infty)}  e^{i(t-t_I^r)\Delta} u_I(t_I^r) \|_{U^2_\Delta}^2 \les 
\sum_{I \in \mathcal{I}_{-m}} \| u_I(t_I^r) \|^2_{L^2_x} \\
\les & \sum_{I \in \mathcal{I}_{-m}} \sum_{q \in C_{-m}}  \|\chi_I \chi_q f \|^2_{L^1_t L^2_x} \les \|f\|_{F_0^{\geq m}}^2.
\end{split}
\]
Then we invoke \eqref{U2Xcest} to conclude with 
\[
\| Q_{\geq m-10} \sum_{I \in \mathcal{I}_0} \chi_{(t^r_I,+\infty)}  e^{i(t-t_I^r)\Delta} u_I(t_I^r) \|_{X_{0,m}} \les \|f\|_{F_0^{\geq m}}. 
\]
Based on \eqref{aux71} we obtain
\[
\left(  \sum_{I' \in \mathcal{I}_{-m}} \sum_{q \in C_{-m}} \| \sum_{I \in \mathcal{I}_{-m}} Q_{\geq m-10}( \chi_I u_I) \|^2_{L^\infty_t L^2_x(q \times I')} \right)^\frac12 \les \| f \|_{F_0^{\geq m}}. 
\]
Therefore so far we have provided all the necessary estimates for $Q_{\geq m-10} u$ in $E_0^{\geq m}$ in the case $m=l$; in particular we also obtained \eqref{aux72}. 

Below we consider the cases $m \ne l$.

If $m < l-12$, then $Q_{\geq m} u = Q_{\geq l-10} u$ and since the norm in $E_0^{\geq l}$ controls the one in $E_0^{\geq m}$, we obtain the desired control on $\| Q_{\geq m} Q_{\geq l-10} u \|_{E_0^{\geq m}}$. 

Finally, if $m \geq l-12$, then we notice that $\| Q_{\geq m} f \|_{F_0^{\geq m}} \les \|f \|_{F_0^{\geq l}}$; this is based on two things: the structure $F_0^{\geq l}$ is stronger than $F_0^{\geq m}$ and the operator $Q_{\geq m}$ is bounded on either structures. The boundedness of the operator $Q_{\geq m}$ is done as follows. We write $Q_{\geq m} = I -  Q_{< m}$ and conjugate $Q_{< m}$ as usual $Q_{< m}  = e^{it\Delta} P_{< m} (D_t) e^{-it\Delta}$ to obtain
\[
Q_{< m}  f = \sum_{I \in \mathcal{I}_{-m}} e^{it\Delta} P_{< m} (D_t) e^{-it\Delta} \chi_I f. 
\]
Just as we did before, using commutator estimates, and the fact that kernel of $P_{< m}$ is a bump function adapted to scale $2^{-m}$, provides the boundedness $Q_{\geq m}$ on $F_0^{\geq m}$ which suffices. Once we have control on $\| Q_{\geq m} f\|_{F_0^{\geq m}}$, 
we repeat the previous argument (for $m=l$) and obtain the bound for $Q_{\geq m}$ in $E_0^{\geq m}$. 

The analysis in lateral coordinates, that is $(x_\e,t, \bar x_\e)$ is entirely similar, just that we rewrite the equation in these coordinates, see \eqref{SEL}.

\subsection{Forcing in $F_{k,\e}$.} By rescaling we can assume $k=0$.  We start from the equation
\begin{equation} \label{SEFLF}
(i \partial_t + \Delta) u =  f, \quad  u(0,x)= 0,
\end{equation}
where we recall that $f \in F_{0,\e}$ satisfies $\tilde P_{0,\e} f =f$, which in turn implies that the solution $u$ satisfies $\tilde P_{0,\e} u =u$. 

As mentioned earlier, it is convenient to dispose of high modulations when passing between the standard frame and the $\e$ frames. This can be easily done by noticing that, given the localization of $f$ in the direction of $\e$ we obtain $\| f \|_{L^2_{t,x}} \les \| f \|_{F_{0,\e}}$, from which we obtain
\[
\| Q_{\geq -100} u  \|_{X^{0,\frac12,1}} \les \| Q_{\geq -100} u  \|_{L^2_{t,x}} \les  \| f \|_{L^2_{t,x}} \les \| f \|_{F_{0,\e}}. 
\]
Thus in what follows we can assume that both $u$ and $f$ are localized at modulation $\leq -100$; note that the kernel of the operator $Q_{\leq -100}$ belongs to $L^1_{t,x}$, hence it is bounded on $F_{0,\e}$. 

Next, establishing the estimates for the solution $u$ of \eqref{SEFLF} in $X_{0,m}$ is similar to the analysis carried for the case $f \in F_0$, just that we work in the coordinates adapted to the $\e$ direction (versus the standard $t,x$), while having the correct localization and having disposed of the high modulation.

\subsection{Forcing in $F_{k,\e}^{\geq l}$.} By rescaling we can assume $k=0$; just as explained above, the analysis is similar to the one when $f \in F_{0}^{\geq l}$, just that we work in the adapted frame.

\section{The bilinear estimate: Proof of Theorem \ref{MTbil}}

The main goal in this section is to prove  Theorem \ref{MTbil}. 
Using rescaling arguments it suffices to consider the case $k_1 \leq 0$ and $k_2=0$, in which case we need to prove the following
\begin{equation} \label{pbil}
\|u \cdot v\|_{L^2_{t,x}} \les 2^{\frac{(n-1)k}2} \|u\|_{X_k} \| v \|_{X_0},
\end{equation}
where $u$ is localized at frequency $2^k \les 1$ and $v$ is localized at frequency $1$. 

We split the analysis based on a specific modulation threshold of the inputs, that is
\[
u \cdot v = Q_{\geq 2k} u \cdot v + Q_{< 2k} u \cdot Q_{\geq 2k} v + Q_{< 2k} u \cdot Q_{< 2k} v. 
\] 
Then \eqref{pbil} is a consequence of the following two results:
\begin{proposition} \label{Hprop}
Assume that $u \in X_{k,2k}$ is localized at frequency $\approx 2^k, k \leq 0$ and $v \in X_{0,2k}$ is localized at frequency $\approx 1$. Then the following holds true
\begin{equation} \label{L2hm}
 \| Q_{\geq 2k} u \cdot v \|_{L^2_{t,x}}  +  \|  u \cdot Q_{\geq 2k} v \|_{L^2_{t,x}}  \les 2^{\frac{(n-1)k}2} \| u \|_{X_{k,2k}} \|v \|_{X_{0,2k}}.
\end{equation}
\end{proposition}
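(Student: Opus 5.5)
The plan is to treat the two terms in \eqref{L2hm} separately. In each, the factor carrying the high modulation $Q_{\geq 2k}$ supplies the $l^2$ summability over space-time blocks (from $X_k^{\geq 2k}$, respectively $X_0^{\geq 2k}$). The other factor is only measured in the $\sup$-type structures ($E_k$, or the local smoothing norms in $X_0$). No modulation information is needed on the second factor. The gain $2^{\frac{(n-1)k}{2}}$ should come from a local Bernstein inequality for the low frequency factor $u$, combined with local smoothing for the high frequency factor $v$.

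\emph{The term $Q_{\geq 2k}u\cdot v$.} Here $m=2k$, so $E_k^{\geq 2k}$ gives $l^2$ summability over blocks $I\times q$ with $I\in\mathcal I_{-2k}$ and $q\in C_{-k}$. This is exactly the scale of $E_k$, so
\[
\|Q_{\geq 2k}u\|_{E_k^{\geq 2k}} \approx \Big(\sum_{I\in\mathcal I_{-2k},\,q\in C_{-k}} \|\chi_I\chi_q Q_{\geq 2k}u\|^2_{L^\infty_tL^2_x}\Big)^{1/2}
\]
up to tails. Since $Q_{\geq 2k}u=\tilde P_k Q_{\geq 2k}u$ and the kernel of $\tilde P_k$ is concentrated at scale $2^{-k}$, I would prove the local Bernstein bound
\[
\|Q_{\geq 2k}u\|_{L^\infty_{t,x}(I\times q)}\les 2^{\frac{nk}2}\sum_{q'\in C_{-k}}\la 2^k d(q,q')\ra^{-N}\|\chi_I\chi_{q'}Q_{\geq 2k}u\|_{L^\infty_tL^2_x}.
\]
For $v$ I apply \eqref{LEX} with $L=2^{-k}$, which gives $\|v\|_{L^2_{t,x}(\R\times q)}\les 2^{-\frac k2}\|v\|_{X_0}$ uniformly in $q$. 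Then
\[
\|Q_{\geq 2k}u\cdot v\|^2_{L^2}\le\sum_{I,q}\|Q_{\geq 2k}u\|^2_{L^\infty(I\times q)}\,\|v\|^2_{L^2(I\times q)}.
\]
Bounding the $v$ factor by $\sup_q\|v\|^2_{L^2(\R\times q)}$ and summing the $u$ factor via Schur's test in $q,q'$ yields $2^{(n-1)k}\|u\|^2_{X_{k,2k}}\|v\|^2_{X_0}$.

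\emph{The term $u\cdot Q_{\geq 2k}v$.} First decompose $v=\sum_{\e\in V^n}v_\e$ as in \eqref{decej}. The $l^2$ structure now comes from $E_{0,\e}^{\geq 2k}$, with blocks $I\times q$, $I\in\mathcal I_{-2k}$, $q\in C_{-2k}$ (cubes of side $2^{-2k}\ge 1$). Within one block I use H\"older in the lateral frame:
\[
\|u\,Q_{\geq 2k}v_\e\|^2_{L^2(I\times q)}\le \|Q_{\geq 2k}v_\e\|^2_{L^\infty_{x_\e}L^2_{t,\bar x_\e}(I\times q)}\int_{x_\e\in q}\sup_{t\in I,\ \bar x_\e}|u|^2\,dx_\e .
\]
To bound the last integral, split the $x_\e$-range of $q$ into segments of length $2^{-k}$, so that $q$ is a union of slabs, each slab a union of cubes $q'\in C_{-k}$. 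On a segment, the local Bernstein bound above gives $\sup|u|^2\les 2^{nk}\max_{q'\subset\text{slab}}\|\chi_I\chi_{q'}u\|^2_{L^\infty_tL^2_x}$, with harmless tails. I bound the max by the sum over the slab and multiply by the segment length $2^{-k}$. Summing over segments then covers each $q'\subset q$ once, which gives
\[
\int_{x_\e\in q}\sup|u|^2\,dx_\e\les 2^{(n-1)k}\|u\|^2_{E_k},
\]
since $I$ is a single $E_k$ time interval. Finally I sum over blocks using $\|P_{0,\e}Q_{\geq 2k}v\|_{E_{0,\e}^{\geq 2k}}\le\|v\|_{X_{0,2k}}$.

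\emph{Main obstacle.} The step I expect to be delicate is making the heuristic block localizations rigorous. The characteristic functions $\chi_I\chi_q$ destroy the frequency localization needed for Bernstein. I would handle this by always writing $u=\tilde P_k u$ (and similarly for $v$) before cutting off, and absorbing the Schwartz tails $\la 2^k d(q,q')\ra^{-N}$ through Schur-type summation. I would also check that the $P_{0,\e}$ versus $\tilde P_{0,\e}$ bookkeeping in the lateral frame is compatible with the definition of $X_0^{\geq 2k}$.
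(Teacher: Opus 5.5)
Your proposal is correct. For $Q_{\geq 2k}u\cdot v$ you follow the paper's argument exactly: H\"older over the boxes $I\times q$ with $I\in\mathcal I_{-2k}$, $q\in C_{-k}$, local Bernstein for $Q_{\geq 2k}u$ summed in $\ell^2$ through $E_k^{\geq 2k}$, and \eqref{LEX} with $L=2^{-k}$ for $v$.

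For the second term you take a genuinely different H\"older split. The paper writes this term as $Q_{<2k}u\cdot Q_{\geq 2k}v$ and mirrors the first term on the same $2^{-k}$ boxes. It places $u$ in $\sup_I\bigl(\sum_{q\in C_{-k}}\|u\|^2_{L^\infty_{t,x}(I\times q)}\bigr)^{1/2}\les 2^{nk/2}\|u\|_{E_k}$. It places $Q_{\geq 2k}v$ in $\bigl(\sum_I\sup_{q\in C_{-k}}\|Q_{\geq 2k}v\|^2_{L^2_{t,x}(I\times q)}\bigr)^{1/2}\les 2^{-k/2}\|Q_{\geq 2k}v\|_{X_0^{\geq 2k}}$, which uses local smoothing across a width $2^{-k}$ and only the time $\ell^2$ in $E^{\geq 2k}_{0,\e}$.

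You instead pair $L^\infty_{x_\e}L^2_{t,\bar x_\e}$ for $v$ with a time-localized maximal function $L^2_{x_\e}L^\infty_{t,\bar x_\e}$ for $u$ on each window of length $2^{-2k}$. That maximal function reduces to Bernstein on $2^{-k}$-slabs and yields $2^{(n-1)k/2}\|u\|_{E_k}$.

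Both routes give the same bound. The paper's is shorter and symmetric with the first term. Yours shows the estimate as the classical maximal-function/local-smoothing pairing of \cite{Be2,IoKe-2}, with the maximal estimate needed only on the time scale $2^{-2k}$, where it is trivial. It also handles the full $u$ as in the statement, rather than $Q_{<2k}u$.

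Two small remarks. The spatial $\ell^2$ over $C_{-2k}$ that you invoke is not really needed, since bounding each slab maximum by the slab sum works over all of $\R^n$; the paper does not use it either. The $P_{0,\e}$ versus $\tilde P_{0,\e}$ bookkeeping you flag is glossed over in the same way in the paper's own Lemma \ref{LLE}.
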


\begin{proposition} \label{Mprop}
Assume that $u \in V^2_\Delta$ is localized at frequency $\approx 2^k, k \leq 0$ and $v \in V^2_\Delta$ is localized at frequency $\approx 1$. Then following holds true
\begin{equation} \label{L2V2}
 \| Q_{\leq 2k} u \cdot  Q_{\leq 2k} v \|_{L^2_{t,x}}  \les 2^{\frac{(n-1)k}2} \| u \|_{V^2_\Delta} \|v \|_{V^2_\Delta}.
\end{equation}
\end{proposition}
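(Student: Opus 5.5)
The plan is to exploit the fact that, after removing modulations above $2^{2k}$, both $e^{-it\Delta}Q_{\leq 2k}u$ and $e^{-it\Delta}Q_{\leq 2k}v$ are essentially constant on time intervals of length $2^{-2k}$. Each factor then behaves like a free wave on such intervals. I would combine a bilinear $L^p_{t,x}$ estimate with $p<2$, which transfers to $V^2_\Delta$ by Candy's theory \cite{Ca}, with a soft $L^2$ estimate on spacetime boxes of the natural scale. An interpolation (more precisely, a decomposition plus Hölder) then recovers the full $2^{\frac{(n-1)k}2}$ factor.

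\textbf{Step 1: free waves on boxes.} Fix the box scale $R=2^{-2k}$ in time and $R$ in space, the distance travelled by $v$ during time $R$. Then $u$, of frequency $2^k$, moves only $2^{-k}\ll R$. I would first record the estimate for free waves $e^{it\Delta}f$, $e^{it\Delta}g$ with $f$ at frequency $2^k$ and $g$ at frequency $1$. Since the frequency supports are separated by $\approx 1$, the rescaled bilinear restriction estimate \eqref{bilSp} gives
\[
\|e^{it\Delta}f\, e^{it\Delta}g\|_{L^p(B)}\les C(k,p)\|f\|_{L^2}\|g\|_{L^2}
\]
for $p>\frac{n+3}{n+1}$, with a power of $2^k$ fixed by scaling; one must check that this power is compatible with $2^{\frac{(n-1)k}2}$ after Hölder. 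The transversality required in \eqref{bilSp} comes from the separation $|\xi_1-\xi_2|\approx 1$, which holds even if both supports are not angularly separated, since the radii differ. Hölder on a box $B$ of volume $R^{n+1}$ costs $|B|^{\frac1p-\frac12}$. I expect this loss to be too large when used alone, so the $L^p$ estimate will only be used for the part of $v$ that interacts with $u$ away from the box where $u$ concentrates. That interaction carries extra decay from the wave packet separation, as in the "improvement outside cubes" mentioned in the introduction. Inside a box I would use the soft bound: Bernstein $\|u\|_{L^\infty_x}\les 2^{\frac{nk}2}\|u\|_{L^2}$, together with $\|v\|_{L^2(I\times\R^n)}\les |I|^{1/2}\|v\|_{L^\infty L^2}$ restricted to the spatial region reached by $u$'s packets.

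\textbf{Step 2: transfer to $V^2_\Delta$.} By \cite{Ca}, estimates in $L^p$ with $p<2$ transfer from free waves to $V^2_\Delta\times V^2_\Delta$ without loss, and I will use this for the "far" pieces. For the "near" pieces I use only the $L^\infty_tL^2_x$ information, localized to time intervals $I\in\mathcal I_{-2k}$. Here the low modulation $Q_{\leq 2k}$ is what allows treating $e^{-it\Delta}u$ as nearly constant on $I$: the kernel of $P_{\le 2k}(D_t)$ is a bump at scale $2^{-2k}$, which gives rapidly decaying tails between different intervals. Summing over $I$ then needs $\ell^2$ control in $I$. For the low-frequency factor I would obtain this from $\|Q_{\geq j}u\|_{L^2_{t,x}}\les 2^{-j/2}\|u\|_{V^2_\Delta}$ (see \eqref{V2L2}) applied dyadically in $j\le 2k$.

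\textbf{Main obstacle.} The hard step is balancing the near and far contributions so that no logarithmic loss survives. Summing dyadic scales naively over $j\le 2k$, or over $V^2$ partition pieces, produces exactly the $\log$ losses known for $V^2_\Delta$. I would first try to obtain geometric decay in the distance (in units of $R$) between the wave packets of $u$ and $v$ from the $p<2$ estimate. That decay makes the far sum converge, and the near part then costs only the soft bound. If this fails, the fallback is to interpolate the $V^2$-transferred $L^p$ bound with the trivial $L^2$ bound via the $V^2\subset U^q$ atomic decomposition, $q>2$. In that case the exponent bookkeeping must be checked carefully to see that the losses cancel exactly.
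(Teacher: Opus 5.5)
There is a genuine gap. Everything in your plan rests on the standard bilinear $L^p_{t,x}$ bound with $p<2$ (transferred to $V^2_\Delta$ via \cite{Ca}), and for this frequency configuration that bound has the wrong dependence on $k$.

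To see this, take $\hat f,\hat g$ to be normalized bumps of radius $c2^k$ centered at points of size $2^k$ and $1$ respectively. Both waves stay coherent for time $2^{-2k}$, with spatial width $2^{-k}$ and amplitude $2^{\frac{nk}2}$. They cross at relative speed $\approx 1$, so the product has size $2^{nk}$ on a set of volume $\approx 2^{-(n+1)k}$. Hence $\|e^{it\Delta}f\,e^{it\Delta}g\|_{L^p}\approx 2^{k(n-\frac{n+1}p)}$, which exceeds the target $2^{\frac{(n-1)k}2}$ by $2^{|k|(n+1)(\frac1p-\frac12)}$. Any argument that passes through the global $L^p$ norm therefore loses a power of $2^{-k}$. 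Your fallback loses too: interpolating the $L^p$ bound against $\|uv\|_{L^\infty_{t,x}}\lesssim 2^{\frac{nk}2}\|u\|_{L^\infty_tL^2_x}\|v\|_{L^\infty_tL^2_x}$ gives at best $2^{k(\frac{np}4-\frac12)}$, a power loss for $p<2$. With only $L^\infty_tL^2_x$ control there is no global-in-time $L^s$ endpoint with $s<\infty$, so the soft endpoint must be localized to cubes. The geometric decay between wave packets that you hope for in the far region is also not something the $L^p$ estimate supplies.

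The paper's replacement is Lemma~\ref{bilR}. It bounds the $\ell^p$ sum over space-time cubes $q\in\mathcal{C}_{-2k}$ of $\|uv\|_{L^2(q)}$ by $2^{\frac{(n-1)k}2}\|u\|_{U^2_\Delta}\|v\|_{U^2_\Delta}$. In the example above this quantity is $\approx\|F\|_{L^2}\approx 2^{\frac{(n-1)k}2}$, so the constant is sharp, and it cannot be deduced from the $L^p$ estimate with this constant. The paper proves it by reopening the wave-table induction on scales. In the off-diagonal step it replaces the $L^2\to L^p$ interpolation by an $\ell^2_q$--$\ell^1_q$ interpolation of local $L^2$ norms, and then upgrades from free waves to $U^2_\Delta$ by Candy's randomization argument \cite{Ca-note}.

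Your near-field bound is also lossy. Bernstein for $u$ together with $\|v\|_{L^2(I\times\R^n)}\le |I|^{\frac12}\|v\|_{L^\infty_tL^2_x}$ on $|I|=2^{-2k}$ gives only $2^{\frac{(n-2)k}2}$ per box. It misses the local-smoothing factor $2^{-\frac k2}$, which $V^2_\Delta$ does not provide.

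The missing use of the modulation cutoff is Lemma~\ref{lmX}: $\tilde\chi_I Q_{\le 2k}u$ and $\tilde\chi_I Q_{\le 2k}v$ lie in $X^{0,\frac12,1}$ with norms $\lesssim\|\cdot\|_{L^\infty_tL^2_x}$. On each cube they are therefore superpositions of free waves and obey the sharp bilinear bound. This gives $\sup_q\|Q_{\le 2k}u\,Q_{\le 2k}v\|_{L^2(q)}\lesssim 2^{\frac{(n-1)k}2}\|u\|_{L^\infty_tL^2_x}\|v\|_{L^\infty_tL^2_x}$, which is Lemma~\ref{Lbilv2}.

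Likewise, the $\ell^2$ summability over time intervals cannot come from \eqref{V2L2}. The bounds $2^{-j/2}$ grow as $j\to-\infty$, and free waves have no such summability at all. The summability has to come from the bilinear interaction, i.e.\ from the $\ell^p_q$ structure.

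With the correct endpoints, namely $\ell^p_qL^2(q)$ for $U^2_\Delta$ inputs and $\ell^\infty_qL^2(q)$ for $L^\infty_tL^2_x$ inputs, bilinear interpolation in Candy's $\ell^aL^2$ atomic framework gives $\ell^2_qL^2(q)=L^2_{t,x}$ for $U^{4/p}_\Delta$ inputs with the sharp constant. Since $p<2$, $V^2_\Delta\subset U^{4/p}_\Delta$, which finishes the proof. Your fallback has the right shape, but its endpoints already carry the wrong constant.
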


It is instructive to keep in mind that all the structures that populated the space $X_{k,m}$, in addition to the  $V^2_\Delta$, were developed precisely in order to accommodate the result in Proposition \ref{Hprop}. The proof of Proposition \ref{Hprop} is fairly straightforward, while the proof of Proposition \ref{Mprop} is far more involving.

\begin{proof}[Proof of Proposition \ref{Hprop}]

The term $Q_{\geq 2k} u \cdot v$ is estimated as follows: 
\[
\begin{split}
\| Q_{\geq 2k} u \cdot v \|_{L^2_{t,x}} & \les \left( \sum_{I \in \mathcal{I}_{-2k}} \sum_{q \in C_{-k}} \| Q_{\geq 2k} u \|^2_{L^\infty_{t,x}(q \cap I)} \right)^\frac12 \cdot  \sup_{I \in \mathcal{I}_{-2k}} \sup_{q \in C_{-k}} \|  v\|_{L^2_{t,x}(q \cap I)} \\
 & \les 2^{\frac{nk}2} \left( \sum_{I \in \mathcal{I}_{-2k}} \sum_{q \in C_{-k}} \| Q_{\geq 2k} u \|^2_{L^\infty_{t}L^2_x(q \cap I)} \right)^\frac12 \cdot  2^{-\frac{k}2} \|  v\|_{X_0} \\
&\les 2^{\frac{(n-1)k}2} \| Q_{\geq 2k} u \|_{E_{k}} \cdot \|  v\|_{X_0}  =
 2^{\frac{(n-1)k}2}  \| u\|_{X_{k,2k}}\| v \|_{X_0}. 
\end{split}
\]
In passing to the second line above we used \eqref{LEX} for $v$ and the following refined version of the Bernstein inequality
\[
\sum_{q \in C_{-k}}   \| Q_{\geq 2k} u \|^2_{L^\infty_{t,x}(q \cap I)}   
\les 2^{nk} \sum_{q \in C_{-k}} \| Q_{\geq 2k} u \|^2_{L^\infty_{t}L^2_x(q \cap I)};
\]
this is justified using the fast decay of the kernel of the projector $P_k$ away from the region $|x| \les 2^{-k}$. 
 
Next we estimate the term $Q_{< 2k} u \cdot Q_{\geq 2k} v$ as follows:
\[
\begin{split}
\| Q_{< 2k} u \cdot Q_{\geq 2k} v \|_{L^2_{t,x}} & \les \sup_{I \in \mathcal{I}_{-2k}} \left( \sum_{q \in C_{-k}} \| Q_{< 2k} u \|^2_{L^\infty_{t,x}(q \cap I)} \right)^\frac12 \cdot \left( \sum_{I \in \mathcal{I}_{-2k}} \sup_{q \in C_{-k}} \| Q_{\geq 2k}  v\|_{L^2_{t,x}(q \cap I)}^2  \right)^\frac12 \\
& \les 2^{nk} \sup_{I \in \mathcal{I}_{-2k}} \left( \sum_{q \in C_{-k}} \| Q_{< 2k} u \|^2_{L^\infty_{t}L^2_x(q \cap I)} \right)^\frac12 \cdot  2^{-\frac{k}2}  \| Q_{\geq 2k}  v\|_{X_{0}^{\geq 2k}} \\
& \les  2^{\frac{(n-1)k}2}  \| u\|_{X_k}\| v \|_{X_{0,2k}}. 
\end{split}
\]
This concludes the proof of the Proposition.
\end{proof}

\subsection{Proof of Proposition \ref{Mprop}} We let $\mathcal{C}_{j}$ be the usual $C_{j}(\R^{n+1})$, the set of (space-time) cubes of size $2^j$ covering $\R^{n+1}$. We begin by restating the classical result for free waves. 

\begin{lemma} \label{bilR}
Let  $p>\frac{n+3}{n+1}$. There exists $C(p)$ such that the following holds true. Assume that $f, g \in L^2_x$, with $f$ being localized at frequency $\approx 2^k, k \leq 0$ and $g$ localized at frequency $\approx 1$. The following holds true
\begin{equation} \label{Lbilfw}
\left( \sum_{q \in \mathcal{C}_{-2k}} \| e^{it\Delta} f \cdot  e^{it\Delta} g \|^p_{L^2(q)} \right)^\frac1p \leq C(p)  2^{\frac{(n-1)k}2} \|  f\|_{L^2} \|  g \|_{L^2}.
\end{equation}
A similar result holds true if we replace the free waves with elements $u,v \in U^2_\Delta$, with with $u$ localized at frequency $\approx 2^k \ll 1$ and $g$ localized at frequency $\approx 1$:
\begin{equation} \label{LbilfwU2}
\left( \sum_{q \in \mathcal{C}_{-2k}} \| u \cdot  v \|^p_{L^2(q)} \right)^\frac1p \leq C(p)  2^{\frac{(n-1)k}2} \|  u\|_{U^2_\Delta} \|  g \|_{U^2_\Delta}.
\end{equation}

\end{lemma}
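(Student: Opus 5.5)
The plan is to reduce \eqref{Lbilfw} to Tao's bilinear $L^p_{t,x}$ restriction estimate \eqref{bilSp} for transversal pieces at a normalized scale, and then to obtain \eqref{LbilfwU2} by the standard transference for $U^2_\Delta$.

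\textbf{Step 1 (angular decomposition of $g$).} Decompose $g=\sum_\kappa g_\kappa$ into caps $\kappa$ of size $2^k$ in frequency. The products $e^{it\Delta}f\cdot e^{it\Delta}g_\kappa$ have space-time Fourier supports in sets of diameter $\approx 2^k$ in $\xi$ and $\approx 2^k$ in $\tau$, and these sets are finitely overlapping as $\kappa$ varies. Localizing to a cube $q\in\mathcal C_{-2k}$ with a smooth cutoff $\tilde\chi_q$ only blurs the Fourier support at scale $2^{2k}\le 2^k$, so the almost orthogonality survives. This gives
\[
\| e^{it\Delta}f\cdot e^{it\Delta}g\|_{L^2(q)}\les \Big(\sum_\kappa \|\tilde\chi_q\, e^{it\Delta}f\cdot e^{it\Delta}g_\kappa\|_{L^2}^2\Big)^{\frac12}.
\]
Since $p<2$, we then use $\|\cdot\|_{\ell^p_q\ell^2_\kappa}\le \|\cdot\|_{\ell^p_q\ell^p_\kappa}$ and $\sum_\kappa\|g_\kappa\|^p\cdots$. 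To make the $\ell^p$ sum over $\kappa$ close against $\|g\|_{L^2}$, I will instead aim for a bound for each $\kappa$ of the form $2^{\frac{(n-1)k}2}\|f\|\,\|g_\kappa\|$, summed in $\ell^2_\kappa$. The $\ell^p_q$ is kept outside, which is legitimate by Minkowski in the order $\ell^p_q\ell^2_\kappa\le \ell^2_\kappa\ell^p_q$ because $p\le 2$.

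\textbf{Step 2 (single cap: Galilean change and rescaling).} For fixed $\kappa$ with center $\xi_0$, apply the Galilean transformation that moves $\xi_0$ to the origin. Then $g_\kappa$ sits in a ball of radius $2^k$ at the origin, and $f$ sits in a ball-like region of size $2^k$ at distance $\approx 1$; the group velocities of the two pieces differ by $\approx 1$. Next rescale parabolically by $2^{k}$, i.e.\ $x\to 2^{-k}x$, $t\to 2^{-2k}t$. Both pieces now have frequency size $\approx 1$ and separation $\approx 2^{-k}$, and the cubes of $\mathcal C_{-2k}$ become slabs of unit time length and spatial size $2^{-k}$, slanted by the Galilean shear. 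On such a region the product of two transversal unit-size waves is essentially locally constant at unit scale. I will bound the $L^2$ norm on each slab by the $L^p$ norm on a slight enlargement, using the finite Fourier support together with H\"older on unit cubes; this yields an $\ell^p$ sum of local $L^p$ norms, hence a global $L^p_{t,x}$ norm. Tao's estimate \eqref{bilSp} with $p>\frac{n+3}{n+1}$ then closes the bound. The powers of $2^k$ coming from the Jacobians and the separation must combine to exactly $2^{\frac{(n-1)k}2}$, and I will check this against the $p=2$ case \eqref{bilS} as a sanity test.

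\textbf{Step 3 (transfer to $U^2_\Delta$).} For \eqref{LbilfwU2} it suffices, by the atomic structure, to take $u,v$ to be $U^2_\Delta$ atoms, i.e.\ sums $\sum_j \mathds 1_{I_j}e^{it\Delta}\phi_j$ with $\ell^2$-normalized $\phi_j$. The left side is an $\ell^p_q L^2(q)$ quantity and the time partitions of the two atoms jointly cut $\R$ into intervals. I will follow the transference argument of Candy \cite{Ca} and Candy--Herr \cite{CaHe-t}: the free-wave estimate is applied on each common interval, and the $\ell^2$ normalization of the atoms is recovered by exploiting $p<2$, which gives room to sum over the interval pieces. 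The $U^2$ (rather than $V^2$) hypothesis is exactly what permits this.

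\textbf{Main obstacle.} I expect Step 2 to be the hardest part: converting the local $L^2$ norms on the sheared slabs into Tao's global $L^p$ bound with precisely the power $2^{\frac{(n-1)k}2}$ and no logarithmic loss. If the direct locally-constant argument loses powers, I will fall back on interpolating \eqref{bilSp} at the appropriate separation scale with the exact $L^2$ estimate \eqref{bilS}.
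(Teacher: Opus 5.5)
Your reduction to the global bilinear $L^p$ estimate fails in two places, and the second failure is structural.

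\textbf{Step 1.} The Minkowski step goes the wrong way. For $p\le 2$ one has $\Bigl(\sum_\kappa\bigl(\sum_q |a_{q\kappa}|^p\bigr)^{2/p}\Bigr)^{1/2}\le\Bigl(\sum_q\bigl(\sum_\kappa |a_{q\kappa}|^2\bigr)^{p/2}\Bigr)^{1/p}$, not the reverse. The diagonal array $a_{q\kappa}=\delta_{q\kappa}$, $1\le q,\kappa\le N$, gives $N^{1/2}$ versus $N^{1/p}$. Different caps have different group velocities, so their interactions with $e^{it\Delta}f$ can sit in different cubes $q$. Hence per-cap bounds summed in $\ell^2_\kappa$ do not control the left side of \eqref{Lbilfw}: cap orthogonality is an $L^2$ device and does not survive an outer $\ell^p_q$ with $p<2$.

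\textbf{Step 2.} Even for a single cap, any route through $\|F\|_{L^p}$ loses a power of $2^{-k}$. Write $r=2^k$. The product with one cap is locally constant only on cells of size $r^{-1}$. After your rescaling the temporal scale is $2^k$, not $1$, and the pieces are $2^{-k}$ apart, so \eqref{bilSp} does not apply as stated. The best bound of the form $\ell^p_qL^2(q)\le A\|F\|_{L^p}$ therefore has $A\approx r^{(n+1)(\frac1p-\frac12)}$; the Knapp example (one packet of each, meeting in a single cell) shows $A$ cannot be smaller.

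Now take $e^{it\Delta}f$ to be one packet, of amplitude $r^{n/2}$ on $\{|x|\les r^{-1},\,|t|\les r^{-2}\}$. Take $g_\kappa=r^{1/2}\sum_{j=1}^{r^{-1}}\phi_j$, with parallel normalized packets crossing it at successive times. Then the left side of \eqref{Lbilfw} is $\approx r^{\frac{n-1}2}$, since everything lies in $O(1)$ cubes. But $\|F\|_{L^p}\approx r^{n+\frac12-\frac{n+2}p}$. So your two-step chain has constant at least $r^{\frac n2-\frac1p}$, which exceeds the target by $2^{|k|(\frac1p-\frac12)}$.

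Your fallback does not help. Interpolating $L^{p_0}=\ell^{p_0}_qL^{p_0}(q)$ with $L^2=\ell^2_qL^2(q)$ moves the inner exponent together with the outer one, so it never yields $\ell^p_qL^2(q)$ with $p<2$.

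\textbf{The missing idea.} The paper never passes through $L^p_{t,x}$. Instead it reruns the wave-table induction on scales (as in \cite{Be-bil}, \cite{Ca}), with the best constant $A_p(R)$ defined directly by the $\ell^p_qL^2(q)$ functional on cubes of size $2^{-2k}R$.
\begin{itemize}
\item The base case is the bilinear $L^2$ estimate \eqref{bilS}.
\item Diagonal terms are handled by the induction hypothesis.
\item In the off-diagonal interactions, the improved bilinear $L^2$ bound away from $B'$, which is an $\ell^2_q$ bound gaining a negative power of $R$, is interpolated with a crude $\ell^1_q$ bound. That $\ell^1_q$ bound loses a factor $R$ and comes from finite speed of propagation on each of the roughly $R$ time slabs of length $2^{-2k}$.
\end{itemize}
This gives $A_p(R)\le(1+Cc)A_p(R/2)+Cc^{-C}R^{\frac{n+3}2(\frac1p-\frac{n+1}{n+3})}$, which closes exactly when $p>\frac{n+3}{n+1}$.

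\textbf{Step 3.} Here $p<2$ is an obstruction, not a source of room. Applying the free-wave bound on each common interval and summing already fails for $u$ a single free wave and $v$ an atom with $N$ pieces of $L^2$ norm $N^{-1/2}$; the loss is $N^{\frac1p-\frac12}$. The paper instead derives \eqref{LbilfwU2} from \eqref{Lbilfw} by Candy's randomization and Khintchine argument \cite{Ca-note}.
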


The proof of this Lemma is provided in the Appendix.  We remark that the standard theory provides this estimate with $L^2(q)$ replaced by $L^p(q)$, in which case the estimate reads an estimate for $ \| e^{it\Delta} f \cdot  e^{it\Delta} g \|_{L^p}$ and with modified powers. 

Our second result is a soft estimate which has the advantage of requiring minimal information on the inputs. 
\begin{lemma} \label{Lbilv2}
Assume that $u, v \in L^\infty_t L^2_x$, with $u$ being localized at frequency $\approx 2^k \ll 1$ and $v$ localized at frequency $\approx 1$.  Then for any cube $q  \in \mathcal{C}_{-2k}$ the following holds true
\begin{equation} \label{linfq}
 \| Q_{\leq 2k} u \cdot   Q_{\leq 2k} v \|_{L^2(q)}  \les 2^{\frac{(n-1)k}2} \| u  \|_{L^\infty_t L^2_x} \| v \|_{L^\infty_t L^2_x}.
\end{equation}
\end{lemma}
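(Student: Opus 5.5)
The idea is to show that, on the time scale $2^{-2k}$ of the cube $q$, both $Q_{\leq 2k}u$ and $Q_{\leq 2k}v$ are $\ell^1$-superpositions of modulated free waves. Their total mass is controlled by the $L^\infty_t L^2_x$ norms alone, so the free bilinear estimate \eqref{bilS} can be applied inside the superposition via Minkowski. In other words, we use a local-in-time $X^{0,\frac12,1}$ structure at the single modulation scale $2^{2k}$, which is available for free once one localizes to a time interval of length $2^{-2k}$.

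\emph{Step 1: localization.} Let $I$ be the time projection of $q$; it has length $2^{-2k}$ (here $k\le 0$, so $2^{-2k}\ge 1$). Since $\tilde\chi_I=1$ on $I$, it suffices to bound $\|\tilde\chi_I Q_{\leq 2k}u\cdot \tilde\chi_I Q_{\leq 2k}v\|_{L^2_{t,x}}$. First, $Q_{\leq 2k}$ is bounded on $L^\infty_tL^2_x$: conjugating, $Q_{\leq 2k}=e^{it\Delta}P_{\leq 2k}(D_t)e^{-it\Delta}$, the operator $e^{\pm it\Delta}$ is unitary at each fixed time, and $P_{\leq 2k}(D_t)$ has an $L^1_t$ kernel. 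Set $\tilde u=\tilde\chi_I Q_{\leq 2k}u$. Then
\[
\|\tilde u\|_{L^2_{t,x}}\les 2^{-k}\|u\|_{L^\infty_tL^2_x},
\]
and the space-time Fourier transform of $\tilde u$ is still essentially concentrated in $|\tau+\xi^2|\les 2^{2k}$. Indeed, $\widehat{\tilde\chi_I}$ is concentrated at $|\tau|\les 2^{2k}$ with Schwartz tails, so multiplying by $\tilde\chi_I$ moves the modulation support by only $O(2^{2k})$, up to rapidly decaying tails. The same holds for $\tilde v=\tilde\chi_IQ_{\leq 2k}v$.

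\emph{Step 2: superposition of free waves.} Write $e^{-it\Delta}\tilde u(t)=\int e^{it\sigma}g_\sigma\,d\sigma$ with $g_\sigma=\mathcal F_t(e^{-it\Delta}\tilde u)(\sigma)$, so that $\tilde u(t)=\int e^{it\sigma}e^{it\Delta}g_\sigma\,d\sigma$. By Plancherel in $t$,
\[
\int\|g_\sigma\|_{L^2_x}^2\,d\sigma\approx\|\tilde u\|_{L^2_{t,x}}^2 .
\]
For the main part, where $|\sigma|\les 2^{2k}$, Cauchy–Schwarz then gives
\[
\int\|g_\sigma\|_{L^2}\,d\sigma\les 2^{k}\|\tilde u\|_{L^2_{t,x}}\les\|u\|_{L^\infty_tL^2_x}.
\]
The tails, coming from $\widehat{\tilde\chi_I}$, are handled by a dyadic decomposition in $|\sigma|\approx 2^{2k+j}$, $j\ge 1$. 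Each dyadic piece gains decay $2^{-Nj}$ that beats the $2^{j/2}$ loss from Cauchy–Schwarz. The same decomposition applies to $\tilde v$, producing functions $h_\rho$ localized at frequency $\approx 1$. The unimodular factors $e^{it\sigma}$ do not affect $L^2$ norms of products.

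\emph{Step 3: applying \eqref{bilS}.} By Minkowski,
\[
\|\tilde u\,\tilde v\|_{L^2_{t,x}}\le\iint\|e^{it\Delta}g_\sigma\cdot e^{it\Delta}h_\rho\|_{L^2_{t,x}}\,d\sigma\,d\rho .
\]
The free estimate \eqref{bilS} with $k_1=k$, $k_2=0$ bounds the right-hand side by $2^{\frac{(n-1)k}2}\int\|g_\sigma\|\,d\sigma\int\|h_\rho\|\,d\rho$, which yields \eqref{linfq}.

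The main obstacle is Step 1/2: one must check carefully that the time cutoff does not destroy the modulation localization, and that the Schwartz tails are summable after the Cauchy–Schwarz loss. Making the $\sigma$-decomposition dyadic handles this. One also needs the frequency localizations $g_\sigma\approx 2^k$ and $h_\rho\approx 1$ to be preserved, which holds because all the operators involved commute with spatial Fourier multipliers.
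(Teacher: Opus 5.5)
Your proposal is correct and follows essentially the paper's route. You cut off to the time slab of $q$ with $\tilde\chi_I$ and show that $\tilde\chi_I Q_{\leq 2k}u$ (and likewise for $v$) is an $X^{0,\frac12,1}$-type $\ell^1$ superposition of modulated free waves with mass $\les \|u\|_{L^\infty_tL^2_x}$; this is the paper's Lemma \ref{lmX}, which you reprove via Plancherel on $|\sigma|\les 2^{2k}$ plus rapid decay of the dyadic modulation tails. You then transfer \eqref{bilS} by Minkowski, whereas the paper does the same transfer one dyadic modulation piece at a time.
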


\begin{proof} Below we skip the use $Q_{\leq 2k}$ and simply work with $u$ with the property that $Q_{\leq 2k+10} u =u$, and similarly for $v$.  Let $I$ be a time interval of length $2^{-2k} $ with the property that $q \subset I \times \R^n$. Since $\tilde \chi_I \equiv 1$ in $ q$ we can replace $u$ with $\tilde \chi_{I} u$ and $v$ with $\tilde \chi_{I} v$ in the above estimate. Since $u \in L^\infty_t L^2_x$, it follows that $\tilde \chi_I u \in L^2_{t,x}$, hence
\[
\tilde \chi_I u =  \sum_{j \in \Z} Q_j ( \tilde \chi_I u). 
\]
In Lemma \ref{lmX} we establish the following
\begin{equation} \label{aux21a}
\| \tilde \chi_I  u \|_{X^{0,\frac12,1}} \les  \| u \|_{L^\infty_t L^2_x}. 
\end{equation}
Then our result in \eqref{linfq} follows from \eqref{aux21a} and the following claim
\begin{equation} \label{aux21}
 \| \tilde \chi_I u \cdot  \tilde \chi_I v \|_{L^2(q)}  \les  2^{\frac{(n-1)k}2} \| \tilde \chi_I u  \|_{X^{0,\frac12,1}} \| \tilde \chi_I v \|_{X^{0,\frac12,1}}.
\end{equation}

This claim is standard in the literature: it says that $X^{0,\frac12,1}$ transfers known estimates for free solutions. 
\eqref{aux21} follows from its dyadic counterpart
\[
 \| Q_{j_1} u \cdot   Q_{j_2} v \|_{L^2(q)}  \les  2^{\frac{(n-1)k}2} 2^{\frac{j_1}2} \| Q_{j_1} u  \|_{L^2_{t,x}} 2^{\frac{j_2}2} \| Q_{j_2} v \|_{L^2_{t,x}},
\]
which we claim to be true for any $j_1,j_2 \in \Z$. But this is a consequence of the fact that $Q_{j_1} u, Q_{j_2} v$ are continuous superpositions of free waves and this allows us to invoke \eqref{Lbilfw}. Indeed letting $\tilde u = e^{-it\Delta} u$, we have $Q_{j_1} u = e^{it\Delta} P_{j_1}(D_t) \tilde u$ and 
\[
P_{j_1} \tilde u(t,x) = \frac1{(2\pi)^{n+1}} \int \rho_{j_1}(\tau) e^{i(t,x) \cdot (\tau,\xi)} \mathcal{F}_{t,x} \tilde u(\tau,\xi) d\tau d\xi = \frac1{2\pi} \int \rho_{j_1}(\tau) e^{it\tau} \mathcal{F}_t \tilde u(\tau,x) d \tau,
\]
so letting $\tilde u_\tau(x) := \frac1{2\pi} e^{it\tau} \mathcal{F}_t \tilde u(\tau,x)$ gives the representation formula
\[
P_{j_1}(D_t) \tilde u  = \int  \rho_{j_1}(\tau)  \tilde u_\tau d \tau, \quad  \int \rho_{j_1}(\tau)  \|\tilde u_\tau\|_{L^2_x} \les 2^{\frac{j_1}2}\| \tilde u \|_{L^2_{t,x}} \approx 2^{\frac{j_1}2}\| Q_j u \|_{L^2_{\tau,\xi}}.
\]
 note that $\tilde u_\tau$ does not depend on $t$. From this we obtain 
\[
Q_{j_1} u = \int \rho_{j_1}(\tau)  e^{it\Delta} \tilde u_\tau d \tau,
\]
which is a representation of $Q_{j_1} u$ as a continuous superposition of free waves. A similar representation is obtained for $Q_{j_2} v$. 

\end{proof}

The result in Lemma \ref{Lbilv2} can be restated as follows
\[
\sup_{ q \in \mathcal{C}_{-2k}} \| Q_{\leq 2k}  u \cdot Q_{\leq 2k}  v \|_{L^2_{t,x}(q)} \les  2^{\frac{(n-1)k}2} \| u  \|_{L^\infty_t L^2_x} \| v \|_{L^\infty_t L^2_x}.
\]
By interpolating between \eqref{LbilfwU2} and this statement we obtain
\begin{equation} \label{bilUa}
\left( \sum_{q \in \mathcal{C}_{-2k}} \| Q_{\leq 2k} u \cdot  Q_{\leq 2k} v \|^2_{L^2(q)} \right)^\frac12 \les 
2^{\frac{(n-1)k}2} \| u  \|_{U^a_\Delta} \| v \|_{U^a_\Delta},
\end{equation}
where $a=\frac4{p}$. Any viable choice of $\frac{n+3}{n+1} <p<2$ works since $V^2_\Delta \subset U^a_\Delta$ given that $a=\frac4{p} > 2$. This completes the proof of Proposition \ref{Mprop}, and in turn the proof of Theorem \ref{MTbil}.

The formalization of the above interpolation can be found in \cite{Ca}, see Proof of Theorem 1.7 there. In the language used there, for an atom
$ a(t)=\sum_{I} \mathds{1}_{I}(t) e^{it\Delta}  \phi_I$ one defines
\[
\|a\|_{l^a L^2}= \left( \sum_I \| \phi_I \|^a_{L^2_x} \right)^\frac1a. 
\]
With this notation, the two bounds between which interpolate read
\[
\left( \sum_{q \in \mathcal{C}_{-2k}} \| Q_{\leq 2k}  u \cdot  Q_{\leq 2k}  v \|^p_{L^2(q)} \right)^\frac1p \leq C(p)  2^{\frac{(n-1)k}2} \|  u\|_{l^2 L^2} \|  g \|_{l^2 L^2}.
\]
and
\[
 \sup_{q \in \mathcal{C}_{-2k}} \| Q_{\leq 2k}  u \cdot  Q_{\leq 2k}  v \|_{L^2(q)}  \les   2^{\frac{(n-1)k}2} \|  u\|_{l^\infty L^2} \|  g \|_{l^\infty L^2}.
\]
This makes the interpolation more visual, and again, more details can be found in \cite{Ca}. 

We conclude this section with a proof of \eqref{aux21a}; in fact we prove a slight generalization of it. 

\begin{lemma} \label{lmX}

Assume that $u\in L^\infty_t L^2_x$, is localized at modulation $\les 2^m$, that is $u=Q_{\leq m+10} u$ and $I$ is an interval of size $\les 2^{-m}$. Then the following holds true
\begin{equation} \label{V2X121}
\| \tilde \chi_I  u \|_{X^{0,\frac12,1}} \les  \| u \|_{L^\infty_t L^2_x}. 
\end{equation}

\end{lemma}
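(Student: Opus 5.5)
The plan is to conjugate by the flow and reduce to a one-variable statement in $t$ with values in $L^2_x$. Set $\tilde u = e^{-it\Delta} u$. Since $e^{-it\Delta}$ commutes with multiplication by $\tilde\chi_I(t)$, we have
\[
\|\tilde\chi_I u\|_{X^{0,\frac12,1}} \approx \|\tilde\chi_I \tilde u\|_{\dot B^{\frac12}_{2,1} L^2_x}, \quad \|\tilde u\|_{L^\infty_t L^2_x} = \|u\|_{L^\infty_t L^2_x},
\]
and the modulation hypothesis becomes $\tilde u = P_{\leq m+10}(D_t)\tilde u$. By time translation and scaling in $t$ alone (which preserves the $\dot B^{1/2}_{2,1}$ norm up to constants and leaves $L^\infty_t L^2_x$ unchanged), it suffices to treat $m=0$, $|I|\les 1$ and $c(I)=0$. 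The claim is then
\[
\|\tilde\chi_I \tilde u\|_{\dot B^{\frac12}_{2,1}} \les \|\tilde u\|_{L^\infty_t L^2_x}
\]
for $\tilde u$ with time frequencies $\les 1$.

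Write $w=\tilde\chi_I\tilde u$ and split the Besov sum over dyadic time frequencies $2^j$.

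\textbf{Low frequencies $j\leq 20$.} Here $\|P_j w\|_{L^2_{t}L^2_x}\les \|w\|_{L^2_{t,x}}\les |I|^{1/2}\|\tilde u\|_{L^\infty L^2}$, since $\tilde\chi_I$ is supported in $2I$. Then $\sum_{j\le 20}2^{j/2}$ converges, and this part is bounded by $\|\tilde u\|_{L^\infty L^2}$. (If $|I|\ll1$ one should rescale to $|I|\approx1$ from the start, replacing $m$ by a smaller effective value; the bound for $\tilde u$ is only used through $P_{\le m+10}$ with $m \leq -\log|I|$, so this is consistent.)

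\textbf{High frequencies $j>20$.} Since $\tilde u$ has time Fourier support in $|\tau|\les 1$, the product $\tilde\chi_I\tilde u$ at frequency $2^j\gg1$ only sees $\widehat{\tilde\chi_I}$ at frequencies $\approx 2^j$. This gives
\[
P_j(\tilde\chi_I\tilde u)=P_j\big((\tilde P_j\tilde\chi_I)\,\tilde u\big).
\]
Because $\tilde\chi_I$ is a Schwartz-type bump adapted to a unit interval, $\|\tilde P_j\tilde\chi_I\|_{L^2_t}\les_N 2^{-Nj}$. Therefore
\[
\|P_j w\|_{L^2_tL^2_x}\le \|\tilde P_j\tilde\chi_I\|_{L^2_t}\|\tilde u\|_{L^\infty_tL^2_x}\les 2^{-Nj}\|\tilde u\|_{L^\infty L^2},
\]
and the sum $\sum_j 2^{j/2}2^{-Nj}$ converges.

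The only point I expect to need care is the high-frequency step: the exact frequency-support bookkeeping for $P_j(\tilde\chi_I\tilde u)$ requires $\tilde u$ to be a tempered distribution in $t$ whose Fourier transform is supported in $|\tau|\le 2^{11}$. This holds because $\tilde u\in L^\infty_tL^2_x$ and $P_{\le 10}(D_t)\tilde u=\tilde u$. The identity then follows from the convolution support rule on the Fourier side. Summing the two ranges gives \eqref{V2X121}.
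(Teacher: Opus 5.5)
Your proof is correct and follows the same structure as the paper's: conjugate by $e^{-it\Delta}$, control the low time-frequencies of $\tilde\chi_I \tilde u$ by the trivial bound $2^{m/2}|I|^{1/2}\|u\|_{L^\infty_t L^2_x}$, and show decay at high time-frequencies. There are two small differences. First, in the high-frequency step the paper applies $\partial_t$ and the Leibniz rule, using Bernstein for $\partial_t P_{\le m+10}(D_t)\tilde u$, to get $\|P_j(D_t)(\tilde\chi_I \tilde u)\|_{L^2_{t,x}} \lesssim 2^{-j} 2^{m/2}\|u\|_{L^\infty_t L^2_x}$, which already suffices; you instead use the support identity $P_j(\tilde\chi_I\tilde u)=P_j((\tilde P_j\tilde\chi_I)\tilde u)$ to get $2^{-Nj}$. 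Second, your explicit rescaling to $|I|\approx 1$ makes visible a harmless reduction to $|I|\approx 2^{-m}$ (raise $m$, which only weakens the hypothesis), which the paper uses tacitly when it bounds $|\partial_t\tilde\chi_I|\lesssim 2^m$.
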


\begin{proof} The starting point is the control the low modulation part:
\[
\|Q_{\leq m} \tilde \chi_I  u \|_{X^{0,\frac12,1}}  \les (2^{m})^\frac12 \|Q_{\leq m} \tilde \chi_I  u \|_{L^2_{t,x}}  
 \les 2^{\frac{m}2} |I|^\frac12 \| u \|_{L^\infty_t L^2_{x}} \les \| u \|_{L^\infty_t L^2_{x}}.
\]
The next observation is that, while  $\tilde \chi_I  u$ loses the exact modulation localization that $u$ has, the contributions at modulations $\geq 2^m$ decay fast.  Fix $j \geq m$. We start with
\[
\begin{split}
\|Q_{j} \tilde \chi_I  u \|_{L^2_{t,x}} &= \| e^{-it\Delta} Q_{j} \tilde \chi_I  u \|_{L^2_{t,x}} =
\|   P_j(D_t) \tilde \chi_I e^{-it\Delta}  u \|_{L^2_{t,x}} \\
& = \|   P_j(D_t) \tilde \chi_I  P_{\leq 2k+10}(D_t) e^{-it\Delta} u \|_{L^2_{t,x}}. 
\end{split}
\]
We let $v = e^{-it\Delta} u$ and record the simple inequality $\|v\|_{L^\infty_t L^2_x} \approx \|u\|_{L^\infty_t L^2_x} $.
Then we compute
\[
\begin{split}
\|  \partial_t  P_j(D_t) \tilde \chi_I  P_{\leq 2k+10}(D_t)  v \|_{L^2_{t,x}} & = \|   P_j(D_t) \partial_t  (\tilde \chi_I  P_{\leq 2k+10}(D_t)  v) \|_{L^2_{t,x}} \\
& \les  \|   \partial_t  (\tilde \chi_I  P_{\leq 2k+10}(D_t)  v) \|_{L^2_{t,x}} \\
& \les  \|   \partial_t  \tilde \chi_I  \cdot P_{\leq 2k+10}(D_t)  v \|_{L^2_{t,x}} +  \|    \tilde \chi_I  \cdot \partial_t  P_{\leq 2k+10}(D_t)  v \|_{L^2_{t,x}} \\
& \les  2^{m} |I|^\frac12 \|  v \|_{L^\infty_t L^2_{x}} \approx 2^{\frac{m}2} \|u\|_{L^\infty_t L^2_x}. 
\end{split}
\]
On the other hand $\|  \partial_t  P_j(D_t) \tilde \chi_I  P_{\leq 2k+10}(D_t)  v \|_{L^2_{t,x}} \approx 2^j \| P_j(D_t) \tilde \chi_I  P_{\leq 2k+10}(D_t)  v \|_{L^2_{t,x}} $, thus from the above estimate we obtain
\[
\| P_j(D_t) \tilde \chi_I  P_{\leq 2k+10}(D_t)  v \|_{L^2_{t,x}} \les 2^{-j }  2^{\frac{m}2} \|u\|_{L^\infty_t L^2_x}. 
\]
From this it follows that
\[
\| Q_{\geq m} \tilde \chi_I  u \|_{X^{0,\frac12,1}}  \les \sum_{j \geq m} 2^{\frac{j}2} \| Q_{j} \tilde \chi_I  u \|_{L^2_{t,x}}  \les \sum_{j \geq m} 2^{\frac{m-j}2}   \|u\|_{L^\infty_t L^2_x} \les \|u\|_{L^\infty_t L^2_x}. 
\]

\end{proof}

\section{Applications: Derivative NLS and the Schr\"odinger Maps}

As an application of the theory developed in this paper we establish the global well-posedness (GWP) for small data in a critical space for a derivative NLS which exhibits null structure. This serves as a simplified model for the Schr\"odinger map equation, for which we provide an alternative proof of the GWP for the problem with small and localized data. In both cases, the standard approach for the GWP theory involves the use of the maximal function estimate $L^2_{x_\e} L^\infty_{t, \bar x_\e}$ combined with local smoothing type norms $L^\infty_{x_\e} L^2_{t, \bar x_\e}$. This works well in dimensions $n \geq 3$, see \cite{Be2, IoKe-2}, but it becomes more complicated in dimension $n=2$ because the maximal type estimate is known to fail "logarithmically". A rather subtle replacement of the maximal function estimate $L^2_{x_\e} L^\infty_{t, \bar x_\e}$ was introduced in \cite{BIKT-1} to effectively deal with this problem. We highlight that this replacement is time dependent, that is the construction depends on the time interval $[0,T]$ where it is made, and then it is shown that the estimates are uniform time so that one can derive the global theory. 

In this section we establish the GWP result for the problems mentioned above in an iteration space that is based on the spaces introduced in this paper. Moreover, when $n=2$, our iteration space has the added bonus that it is global in time, unlike its counterpart mentioned above. 

Our first equation is the following derivative NLS equation:
\begin{equation} \label{Mz}
iz_t + \Delta z= (\nabla z)^2, \quad z(0,x)=z_0(x);
\end{equation}
here $z: \R \times \R^n \rightarrow \C$. The nonlinearity is special in that it exhibits a null structure, which can be algebraically formalized as follows:
\begin{equation} \label{null}
2 \nabla z_1 \cdot \nabla z_2= -(i \partial_t + \Delta) z_1 \cdot z_2 -  z_1 \cdot (i \partial_t + \Delta) z_2 + (i \partial_t + \Delta)(z_1 \cdot z_2).
\end{equation}
The wording "null condition" is related to the fact that this nonlinearity has the property that it vanishes a specific type of interaction: two free waves whose product is a free wave; these types or interactions are known in the literature as resonances. 
The scaling of \eqref{Mz} is $s_c=\frac{n}2$. We develop a theory in the scale invariant $\dot B^{\frac{n}2}_{2,1}$ which is the standard Besov space of functions $f$ obeying the Calderon representation formula $f= \sum_{k \in \Z} P_k f$, and endowed with the norm 
\[
\|f\|_{\dot B^{\frac{n}2}_{2,1}} = \sum_k 2^{\frac{nk}2} \|P_k f\|_{L^2}. 
\]
A more classical way of defining this space is by taking the closure of the Schwartz space with respect to the norm above. This ensures that the functions in $\dot B^{\frac{n}2}_{2,1}$ have zero limit at $\infty$; this is important since functions in $\dot B^{\frac{n}2}_{2,1}$ are defined only up to constants and the zero limit at $\infty$ removes this uncertainty.

Our first result in this section is the following
\begin{theorem} \label{MzT}
i) There exists $\epsilon >0$ such that for any initial data $z_0 \in \dot B^{\frac{n}2}_{2,1}$ with 
$\|z_0\|_{\dot B^{\frac{n}2}_{2,1}} \leq \epsilon$, the equation \eqref{Mz} has a solution $z(t) \in C_t \dot B^{\frac{n}2}_{2,1}$ which is unique in the space $S^\frac{n}2_{str}$; the solution is Lipschitz with respect to the initial data. Moreover $z(t)$ scatters to free waves as $t \rightarrow \pm \infty$. 

ii) If the initial data has additional regularity $z_0 \in H^s$ with $s > \frac{n}2$, then the solution preserves that regularity uniformly in time.

\end{theorem}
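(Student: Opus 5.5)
The plan is a perturbative fixed point argument in a global resolution space assembled from the spaces $X_{k,m}$. Let $S_k = \cap_{m \leq 2k} X_{k,m}$ with norm $\sup_{m\le 2k}\|\cdot\|_{X_{k,m}}$, and define
\[
\|z\|_{S^{\frac n2}} = \sum_{k\in\Z} 2^{\frac{nk}2} \|P_k z\|_{S_k}.
\]
The space $S^{\frac n2}_{str}$ in the statement will be this space intersected with the Besov-type Strichartz norms $\sum_k 2^{\frac{nk}{2}}\|P_k z\|_{L^q_tL^r_x}$; these are controlled anyway by \eqref{V2Str}. For the nonlinearity I would use the dual-type norm $\|F\|_{N^{\frac n2}}=\sum_k 2^{\frac{nk}2}\|P_kF\|_{Y_{k,l(k)}}$, where the choice $l=l(k)$ is allowed to depend on the interaction being estimated. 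By Theorem \ref{MTl} (or directly Lemma \ref{Pinh} together with Corollary \ref{U2Xc}) the linear estimate $\|z\|_{S^{\frac n2}} \les \|z_0\|_{\dot B^{\frac n2}_{2,1}} + \|F\|_{N^{\frac n2}}$ then holds for the solution of $(i\partial_t+\Delta)z=F$ with $z(0)=z_0$. Given a bilinear estimate
\[
\|\nabla z_1\cdot\nabla z_2\|_{N^{\frac n2}}\les \|z_1\|_{S^{\frac n2}}\|z_2\|_{S^{\frac n2}},
\]
the contraction mapping principle in a small ball gives existence, uniqueness and Lipschitz dependence.

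Scattering then follows from the $V^2_\Delta$ component, because $e^{-it\Delta}P_kz$ has limits as $t\to\pm\infty$ in $L^2$, and the Besov sum converges. Continuity $z\in C_t\dot B^{\frac n2}_{2,1}$ follows from the Duhamel formula. Part ii) is the usual persistence argument: bound $\sum_k 2^{sk}\|P_kz\|_{S_k}$ by the same bilinear estimate, with the high frequency carrying the $2^{sk}$ weight, so that the bound is linear in the $H^s$ norm and has a small coefficient.

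For the bilinear estimate I would argue by duality, as in \eqref{NXY}, and Lemma \ref{Ldual} reduces matters to bounding trilinear forms $|\la P_k(\nabla u_{k_1}\cdot\nabla v_{k_2}),w_k\ra|$, with $w\in X_{k,l}$. The first case is high$\times$low$\to$high, with $k_1\ll k_2\sim k$. Here I would use the null form \eqref{null} and substitute modulations, splitting each factor at threshold $2^{2k_1}$. If all three factors have modulation $\lesssim 2^{2k_1}$, the symbol $2\xi_1\cdot\xi_2 = (\tau+|\xi|^2)-(\tau_1+|\xi_1|^2)-(\tau_2+|\xi_2|^2)$ gains: the factor $2^{k_1+k_2}$ from the two derivatives is replaced by $2^{2k_1}$ at most. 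Applying Cauchy--Schwarz with the bilinear estimate \eqref{uvL2} on $u\cdot w$ (with $m=2k_1$) and $L^2$ control of the high-modulation factor via \eqref{V2L2} then gives $2^{\frac{(n-1)k_1}{2}}2^{-\frac{k_2}2}\cdot 2^{k_1+k_2}\cdot 2^{-\frac{k_2}{2}}$-type bounds, which are summable against the $\dot B^{\frac n2}$ weights. In fact it is more robust to write $\nabla u\cdot\nabla v$ as a sum of three terms, in each of which $(i\partial_t+\Delta)$ falls on one factor, measured in $DU^2_\Delta$-dual fashion via $\|(i\partial_t+\Delta)Q_{\ge m}f\|$. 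The high-modulation pieces are handled through Proposition \ref{Hprop}, which is exactly where the $X_k^{\ge m}$ and local smoothing structures enter. When the output $w$ carries the high modulation, I would place $f=P_k(\ldots)$ into $F_k^{\ge l}$ or $F_{k,\e}$, using local smoothing for the high input and the $E_{k_1}$ bound $2^{\frac{nk_1}2}$ Bernstein for the low input, as in the proof of Proposition \ref{Hprop}.

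The second case is high$\times$high$\to$low, with $k_1\sim k_2\gg k$. Here I would apply \eqref{uvL2} to the pair $(w_k, u_{k_1})$ and put the remaining high factor in $L^2$, either via local smoothing \eqref{LEX} on cubes, or by bounding the output in $F_{k,\e}$ using $L^1_{x_\e}L^2$ with $L^2_{x_\e}L^\infty$-free arguments. The $\dot B^{\frac n2}_{2,1}$ weights give a factor $2^{\frac{n}{2}(k-k_1)}$ against a loss of at most $2^{k_1-k}$ coming from the derivatives, which is summable for $n\ge 2$ (borderline $n=2$ requires the null structure again).

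The main obstacle I expect is matching the modulation threshold $l$ of the dual space with the frequency of the low input, since Theorem \ref{MTbil} requires the specific threshold $m=2k_1$. The point of using $\sup_m$ in $S_k$ is precisely to allow the choice $l=2k_1$ interaction by interaction. What has to be checked carefully is that summing over $k_1$ remains compatible with the fact that the dual estimate in Lemma \ref{Ldual} involves a single $l$. I would first try decomposing $P_kF=\sum_{k_1}F^{k_1}$ and placing each $F^{k_1}$ in $Y_{k,2k_1}$, then summing the resulting $S_k$ bounds using the $\ell^1$ structure in $k_1$ supplied by the off-diagonal gains.
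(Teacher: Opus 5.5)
\textbf{There is a genuine gap in the high$\times$low$\to$high case.} Much of your plan matches the paper: the contraction in an $\ell^1$-Besov sum of the $S_k$ norms, the reduction via Lemma \ref{Ldual} and Theorem \ref{MTl} to dyadic trilinear forms tested against $w\in X_{k,2k_{\min}}$, the null form \eqref{null}, and \eqref{uvL2} and Proposition \ref{Hprop} for low$\times$high products. Your handling of the single threshold $l$, by splitting in $k_1$ and taking $l=2k_1$ piece by piece, is also fine.

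The gap is the regime where the \emph{low-frequency} input $z_1$ (frequency $2^{k_1}$) carries modulation $2^j$ with $2^{2k_1}\ll 2^j\lesssim 2^{k_1+k_2}$, while $z_2$ and the test function $z_3$ (frequency $2^{k_2}$) sit at lower modulation. This configuration occurs, since the resonance function is $2\xi_1\cdot\xi_2$ and can take any size up to $2^{k_1+k_2}$. Your recipe, \eqref{uvL2} on a low--high pair plus \eqref{V2L2} on the high-modulation factor, does not apply here. The factor you can put in $L^2_{t,x}$ is the low-frequency one, so you are left with $P_{k_1}(z_2\bar z_3)$, a high$\times$high$\to$low product. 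Theorem \ref{MTbil} gives no bilinear $L^2$ bound for that, so only Bernstein and Strichartz are available. Proposition \ref{Hprop} does not help either, because no remaining low-modulation factor can be placed in $L^2_{t,x}$. After the null form the term is $I_1=\int (i\partial_t+\Delta)Q_jz_1\, z_2\bar z_3$. With only $V^2_\Delta$ information on $z_1$ you get $2^{j/2}2^{\frac{(n-2)k_1}2}$, which loses $2^{\frac{j-2k_1}2}$, up to $2^{\frac{k_2-k_1}2}$, against the required $2^{\frac{nk_1}2}$. So the symmetric bound $\|\nabla z_1\cdot\nabla z_2\|_{N^{\frac n2}}\lesssim\|z_1\|_{S^{\frac n2}}\|z_2\|_{S^{\frac n2}}$ that you claim is not supported by your argument.

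\textbf{The missing idea is a strengthened high-modulation norm, not Strichartz norms.} Your $S^{\frac n2}_{str}$ adjoins Strichartz norms, which add nothing because of \eqref{V2Str}; it is also not the uniqueness class in the statement. The paper instead defines
\[
\|f\|_{S_{k,str}}=\|f\|_{S_k}+2^{-k}\||\tau|Q_{\geq 2k+100}f\|_{L^2_{t,x}}.
\]
This encodes ellipticity: at modulation $2^j\gg 2^{2k_1}$ one has $|\tau|\approx 2^j$, so $\|(i\partial_t+\Delta)Q_jz_1\|_{L^2}\lesssim 2^{k_1}\|z_1\|_{S_{k_1,str}}$, and $I_1$ then closes with exactly $2^{\frac{nk_1}2}$. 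The iteration is therefore asymmetric, with inputs in $S_{str}$ and test functions in $S_{k_3}$. The strong norm must also be recovered for the solution, which is the separate estimate \eqref{biSk} on $P_{k_3}Q_{\geq 2k_3+100}(\nabla z_1\cdot\nabla z_2)$ in $L^2_{t,x}$. For $n=2$, its high$\times$high$\to$low part needs the off-diagonal gain from \eqref{impr}, interpolated into $U^{8/3}_\Delta\supset V^2_\Delta$.

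\textbf{A smaller error in the high$\times$high$\to$low case.} You propose putting the remaining high-frequency factor in $L^2$ via local smoothing on cubes. A low-modulation high-frequency factor cannot be placed in $L^2_{t,x}$ globally that way. The paper uses the null form there in every dimension, not only for $n=2$, and bounds the term with $(i\partial_t+\Delta)$ on the low output by Bernstein and Strichartz.
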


The space $S^{\frac{n}2}_{str}$ will be introduced below, right after Theorem \ref{SMT}.

One of the motivation for studying the model \eqref{Mz} comes from the more relevant Schr\"odinger map equation which we describe below in \eqref{SMz}. The two equations share the same scaling and main difficulty - the derivative nonlinearity with a null structure. The reason to begin the analysis with the study of \eqref{Mz}, is to highlight the core and the simplicity of the argument, without the additional technicalities that are involved in the analysis of \eqref{SMz}.

We now introduce the Schr\"odinger map equation. We will be somehow brief in the presentation of the subject of Schr\"odinger Maps; for a more in-depth introduction we refer the reader to \cite{GrSt, IoKe-1, IoKe-2, Be2} (particularly for the stereographic projection model), or to \cite{KTV} (for a more general introduction in the context of geometric equations). 

Recall that the Schr\"odinger map equation $u: \R^n \times \R \rightarrow \S^2$ is given by 
\begin{equation}
  u_t = u \times \Delta u, \qquad u(0) = u_0.
\label{SM}\end{equation}
Assuming that the evolution of the map is localized near a point, one can use the stereographic projection based on the antipodal point to represent the equation in local coordinates. Assuming that the map is localized near the north pole $N=(0,0,1)$, we can use the stereographic projection through the north pole $N=(0,0,-1)$ 
\[
u \in \S^2 \rightarrow z= \frac{u_1+iu_2}{1+u_3}, \quad z \in \C \rightarrow (\frac{2 \Re z}{1+|z|^2}, \frac{2 \Im z}{1+|z|^2}, \frac{1-|z|^2}{1+|z|^2}) \in \S^2, 
\]
to represent the equation in local coordinates as follows
\begin{equation} \label{SMz}
i z_t + \Delta z=\frac{2 \bar z}{1+|z|^2} (\nabla z)^2. 
\end{equation}
To keep such a localization in place it suffices to assume $\|u(0)-N\|_{L^\infty} \ll 1$ which can be controlled by $\|u(0)-N\|_{\dot B^{\frac{n}2}_{2,1}} \ll 1$; the latter has the advantage that it is propagated by the flow. This translates into $\|z(0)\|_{\dot B^{\frac{n}2}_{2,1}} \ll 1$. Thus our focus is on the nonlinear equation \eqref{SMz} with small initial data $z_0 \in \dot B^{\frac{n}2}_{2,1}$. 

\begin{theorem} \label{SMT}
The same result as in Theorem \ref{MzT} holds true for the equation \eqref{SMz}. 
\end{theorem}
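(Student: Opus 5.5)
The plan is to follow the scheme used for Theorem \ref{MzT}, treating \eqref{SMz} as the model \eqref{Mz} with an extra smooth, bounded coefficient. We take the iteration space $S^{\frac n2}_{str}$, with norm $\sum_k 2^{\frac{nk}2}\sup_{m\le 2k}\|P_k z\|_{X_{k,m}}$ (plus whatever Strichartz component the definition of $S^{\frac n2}_{str}$ carries), and the nonlinear space $N^{\frac n2}$ built from $\sum_k 2^{\frac{nk}2}\|P_k F\|_{Y_{k,l}}$ with the appropriate choice of $l$. We set up the Duhamel map, use Theorem \ref{MTl} for the linear bounds, and close a contraction for small data. The new ingredient is the trilinear-plus estimate
\[
\Big\| \frac{2\bar z}{1+|z|^2}(\nabla z)^2 \Big\|_{N^{\frac n2}} \les \|z\|_{S^{\frac n2}_{str}}^3\,\big(1+\|z\|_{S^{\frac n2}_{str}}\big)^{C},
\]
together with its difference version, which gives Lipschitz dependence and uniqueness. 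Scattering then follows from the $V^2_\Delta$ component, since $V^2_\Delta$ functions have limits as $t\to\pm\infty$. Persistence of $H^s$ regularity follows by rerunning the estimates with the weights $2^{sk}$, using the fact that all frequency interactions are controlled.

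First, we write $\frac{2\bar z}{1+|z|^2}=\bar z\, G(z)$ with $G$ analytic near $0$ in $(z,\bar z)$. We then expand it as a convergent power series, so it suffices to bound multilinear expressions $w\,(\nabla z)^2$, where $w$ is a product of factors $z,\bar z$. The algebra property of $\dot B^{\frac n2}_{2,1}\cap L^\infty$ type norms in $S^{\frac n2}_{str}$ then handles the coefficient with geometric bounds. The key point is that $w$ is a low-regularity, $L^\infty$-type multiplier. We control $w$ in the $L^\infty_{t,x}$-based part of the space, which Bernstein gives from the $E_k$ norms at Besov regularity $\frac n2$, and we control $(\nabla z)^2$ exactly as in Theorem \ref{MzT}.

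For the core term we use duality as in \eqref{NXY}. We test $P_k[w\,\nabla z_1\cdot\nabla z_2]$ against $\phi\in X_{k,l}$ and apply Lemma \ref{Ldual}. We perform a Littlewood--Paley trichotomy on $w,z_1,z_2,\phi$, and when the derivatives fall on resonant pieces we apply the null identity \eqref{null} to $\nabla z_1\cdot\nabla z_2$. This converts the derivative interaction into terms of the form $(i\partial_t+\Delta)z_j$, which are again nonlinear and hence quintic (absorbed by smallness), plus the boundary term $(i\partial_t+\Delta)(z_1z_2)$, which is moved onto $\phi$ and $w$. For the high$\times$low interactions, in each pair with a low and a high frequency factor we invoke Theorem \ref{MTbil} to gain $2^{\frac{(n-1)k_1-k_2}2}$. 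The local smoothing components $E_{k,\e}$ absorb the derivative on the high frequency, while the remaining factors are placed in $L^\infty$ via $E_k$ and Bernstein.

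The main obstacle is the coefficient $w$ in the expression $(i\partial_t+\Delta)(z_1z_2)$ after integrating by parts, which produces $(i\partial_t+\Delta)$ acting on $w\bar\phi$. Here we need $(i\partial_t+\Delta)w$, which is again a nonlinear expression (by the equation), and a commutator $\nabla w\cdot\nabla\phi$. We must also handle the case where $w$ carries the highest frequency, where a bilinear estimate pairing $w$ with $\phi$ is needed. The first attempt will be to treat $\nabla w\cdot\nabla\phi$ again by \eqref{null} and Theorem \ref{MTbil}, and to handle high-frequency $w$ by the modulation split of Proposition \ref{Hprop}, using the $X_k^{\ge m}$ structures to absorb the high-modulation pieces. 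If that fails, the fallback is the known gauge/normal-form reduction used in \cite{Be2, IoKe-2}, with all estimates transferred into the $X_{k,m}$ framework.
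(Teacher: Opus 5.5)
Your proposal is not yet a proof. The case you yourself flag as the main obstacle, the coefficient $w$ meeting the null structure, is left open: you give a ``first attempt'' and a fallback to the gauge/normal-form machinery of \cite{Be2, IoKe-2}. The first attempt is circular. The identity \eqref{null} is just the Leibniz rule for $i\partial_t+\Delta$, so applying it to the cross term $\nabla\bar w\cdot\nabla\phi$ returns $(i\partial_t+\Delta)(\bar w\phi)$ minus the other two Leibniz terms. That is exactly the expression whose expansion produced the cross term.

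Two further steps are not legitimate. First, in a contraction argument the input $z$ does not solve \eqref{SMz}, so $(i\partial_t+\Delta)z_j$ cannot be replaced by the nonlinearity and declared quintic. Second, even for genuine solutions $(i\partial_t+\Delta)w$ is not a nonlinear expression: $w$ contains conjugate factors, and $(i\partial_t+\Delta)\bar z=-\overline{(i\partial_t+\Delta)z}+2\Delta\bar z$. The paper never substitutes the equation. After the modulation split, all factors sit at modulation $\lesssim 2^{k_1+k_2}$. There $(i\partial_t+\Delta)$ of a factor is bounded in $L^2_{t,x}$ through $V^2_\Delta\subset\dot X^{0,\frac12,\infty}$ from \eqref{UVX}. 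For the low-frequency factor at intermediate modulation, the extra $S_{k,str}$ component is used instead.

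The idea you are missing is that the coefficient never has to pass through the null form.

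\begin{itemize}
\item The paper proves in Lemma~\ref{Salg} that $S^{\frac n2}+\bar S^{\frac n2}$ is an algebra. So $\frac{\bar z}{1+|z|^2}$ is treated as a single factor $z_1\in S^{\frac n2}+\bar S^{\frac n2}$ carrying the full resolution structure.
\item An $L^\infty_{t,x}$ bound on $w$ is not enough. Whenever $z_1$ is among the top frequencies it must enter one of the bilinear pairings \eqref{uvL2}.
\item One then estimates $\int z_1\nabla z_2\cdot\nabla z_3\,\bar z_4$ by cases. Suppose a differentiated factor or $z_4$ has the lowest frequency, or $z_1$ does but $k_1\ge k_4-10$. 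Then two applications of \eqref{uvL2} give \eqref{triSka} with no null structure at all.
\item In the remaining case $k_1=k_{\min}\le k_4-10$, the low--high product estimate behind Lemma~\ref{Salg} gives $\|z_1\bar z_4\|_{\bar S_{k_4}}\lesssim 2^{\frac{nk_1}2}\|z_1\|_{S_{k_1}+\bar S_{k_1}}\|z_4\|_{S_{k_4}}$, with output at frequency $2^{k_4}$.
\item Writing $z_1\bar z_4=\bar w_4$ with $w_4\in S_{k_4}$, the term becomes literally the model form covered by \eqref{triSk}. There $i\partial_t+\Delta$ only ever falls on $z_2$, $z_3$ or the new test function $w_4$, so neither $(i\partial_t+\Delta)w$ nor $\nabla w\cdot\nabla\phi$ ever appears.
\item The upgrade to $S^{\frac n2}_{str}$, namely \eqref{biSka}, is then obtained by direct estimates without null structure, as for \eqref{biSk}.
\end{itemize}
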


We end this section with the definition of the resolution space $S^\frac{n}2$. The resolution space is based on the spaces introduced in the first part of this paper and the multilinear estimates used to establish the necessary contraction type bounds rely heavily on the bilinear estimates in \eqref{MTbil}. 

We begin with defining the structure $S_k$ as follows
\[
\|f \|_{S_k} =  \sup_{m \leq 2k} \| Q_{\geq m} f\|_{X_{k,m}}. 
\]
where we recall the convention $Q_{\leq - \infty} = I$ and $X_{k,-\infty}=V^2_\Delta \cap X_k$. 

The core global space is $S^\frac{n}2= \{ f \in C_t \dot B^{\frac{n}2}_{2,1}: P_k f \in S_k \}$ with the norm
\[
\| f \|_{S^\frac{n}2} =  \sum_{k \in \Z} 2^{\frac{nk}2}  \|P_k  f\|^2_{S_{k}}.
\]
This is a Banach space; we briefly sketch an argument. The starting point is the fact that $V^2_\Delta$ is complete and its subspace $C_t \dot B^{\frac{n}2}_{2,1} \cap V^2_\Delta$ is closed. This suffices to identify the limit $f$ of the Cauchy sequence $(f_n)_{n \in \N}$ in $C_t \dot B^{\frac{n}2}_{2,1}$ with the property that $P_k f_n \rightarrow P_k f$ in $C_t \dot B^{\frac{n}2}_{2,1} \cap V^2_\Delta$. Then we upgrade the convergence $P_k f_n \rightarrow P_k f$  in all structures used in $X_{k,m}$ using the observation that all structures involved are mass type combined with $l^2$ (across cubes), hence complete. 

The space $S^\frac{n}2$ contains almost all the information that we need to close the iteration argument for the problem, except for one particular interaction: when a low frequency term contains a very high modulation. To deal with this case we strengthen a bit the resolution space. We refine $S_k$ in the high modulation as follows
\[
\| f \|_{S_{k,str}} = \|f\|_{S_k} + 2^{-k} \| |\tau| Q_{\geq 2k+100} f \|_{L^2_{t,x}}. 
\]
This structure is meant to measure functions localized at frequency $\approx 2^k$, and we note that $S_{k,str}$ augments the structure $S_k$ in the high modulation regime $|\tau-\xi^2| \geq 2^{2k+100}$, where in fact we have $|\tau| \approx |\tau-\xi^2|$. In the original $S_k$ we had $ \sup_{m \geq 2k+100}  \rho_{m}(\tau) |\tau|^\frac12 \hat f \in L^2$ (due to the embedding $V^2_\Delta \subset X^{0,\frac12,\infty}$), while in the new structure we have  $ \rho_{\geq 2k+100}(\tau) 2^{-k} |\tau| \hat f \in L^2$. 
At the boundary threshold $m =2k+100$ the two structures are the same since $|\tau| \approx 2^k |\tau|^\frac12$, but then as $m$ increases, the structure $S_{k,str}$ improves the decay in $\tau$. The reason we use the subscript $str$ is to indicate that $S_{k,str}$ is the stronger version of $S_k$. 

Based on the above, we introduce $S_{str}^\frac{n}2$ as the subspace of function in  $S^\frac{n}2$ with the property
\[
\| f \|_{S^\frac{n}2_{str}} =  \sum_{k \in \Z} 2^{\frac{nk}2}  \|P_k  f\|^2_{S_{k,str}} < \infty. 
\]

\subsection{Derivative NLS} 

In this section we provide the proof of Theorem \ref{MzT}. The main estimates that we seek to establish are the following
\begin{equation} \label{triSk} 
| \int (\nabla z_1 \cdot \nabla z_2) \cdot \bar z_3 dx dt| \les 2^{(k_3-k_{max})} 2^{\frac{n(k_1+k_2-k_3)}2} \| z_1 \|_{S_{k_1,str}} \| z_2 \|_{S_{k_2,str}}  \|z_3\|_{S_{k_3}},
\end{equation}
and
\begin{equation} \label{biSk} 
 \| P_{k_3} Q_{\geq 2k_3 + 100} (\nabla z_1 \cdot \nabla z_2) \|_{L^2_{t,x}} \les 2^{\frac{(k_3-k_{max})}4} 2^{k_3} 2^{\frac{n(k_1+k_2-k_3)}2}\| z_{1} \|_{S_{k_1}} \| z_2 \|_{S_{k_2}}. 
\end{equation}
In the above we assume that $z_i$ are localized at frequency $2^{k_i}, i=1,2,3$ and $k_{max}=\max(k_1,k_2,k_3)$. 

Assuming the two estimates above are true, we establish the Theorem \ref{MzT}. 

We define the operator $T: S_{str}^\frac{n}2 \rightarrow S_{str}^\frac{n}2$ as the solution of the equation
\[
(i \partial_t + \Delta) Tz= (\nabla z)^2, \quad z(0)=z_0.
\]
Below we establish that by invoking  \eqref{triSk}, \eqref{biSk} and Theorem \ref{MTl} it follows that $T: S_{str}^\frac{n}2 \rightarrow S_{str}^\frac{n}2$ is bounded and moreover it is a contraction. 

The first step is a qualitative observation. A direct estimate shows that
\[
\| P_{k_3} (\nabla P_{k_1} z \cdot \nabla P_{k_2} z) \|_{L^\infty_t L^2_x} \les 2^{k_1+k_2} \|P_{k_1} z\|_{L^\infty_t L^\infty_x} 
\| P_{k_2} z\|_{L^\infty_t L^2_x} \les 2^{\frac{(n+2)k_1+2k_2}2} \|P_{k_1}\|_{S_{k_1}} \|P_{k_2}\|_{S_{k_2}}.
\]
This guarantees that $P_{k_3} (\nabla P_{k_1} z \cdot \nabla P_{k_2} z) \in L^1_{t,loc} L^2_x$, hence  $P_{k_3} T ( P_{k_1} z, P_{k_2} z) \in C_t L^2_x$, in particular it is a ruled function; this also provides the qualitative estimate $P_{k_3} T ( P_{k_1} z, P_{k_2} z) \in C_t \dot B^{\frac{n}2}_{2,1}$, which then can be recovered globally using the uniform estimates described below. 

From \eqref{triSk} and Theorem \ref{MTl}, we obtain the following bound
\[
2^{\frac{nk_3}2} \| P_{k_3} T ( P_{k_1} z, P_{k_2} z) \|_{S_{k_3}} \les 2^{(k_3-k_{max})} 2^{\frac{nk_1}2} \|P_{k_1} z \|_{S_{k_1,str}} 2^{\frac{nk_2}2} \|P_{k_2} z \|_{S_{k_2,str}}
\]
A direct summation argument provides the bound $\| Tz \|_{S^\frac{n}2} \les \| z \|^2_{S^\frac{n}2_{str}}$. 
It is easy to see that the same argument gives the contraction type bound
\[
\| Tz_1 - Tz_2 \|_{S^\frac{n}2} \les (\| z_1 \|_{S^\frac{n}2_{str}} + \| z_2 \|_{S^\frac{n}2_{str}}) \| z_1-z_2 \|_{S^\frac{n}2_{str}}. 
\]
Next, using \eqref{biSk} we can improve the two bounds above by placing the left-hand side in $S^\frac{n}2_{str}$. This concludes the proof of the fact that $T: S_{str}^\frac{n}2 \rightarrow S_{str}^\frac{n}2$ is a contraction on $ S_{str}^\frac{n}2$ if we assume that the initial data is small in $\dot B^{\frac{n}2}_{2,1}$ and establishes the claim in Theorem \ref{MzT} part i); the scattering part is standard since $z(t) \in V^2_\Delta$, hence $e^{-it\Delta} z$ has well defined limits as $ t \rightarrow \pm \infty$. 

Part ii) is a standard and follows from the same type of arguments. 

In the remaining of this section we establish the key estimates \eqref{triSk} and \eqref{biSk}. 

\begin{proof}[Proof of \eqref{triSk}] The main goal is the analysis of the trilinear form
\[
I= \int \nabla z_{1} \cdot \nabla z_{2} \cdot \bar z_3 dx dt,
\]
where we recall that $z_i$ is localized at frequency $2^{k_i}, i=1,2,3$. In order to keep the notation at minimum, we abuse it in several places which we now explain. We involve modulation localizations on $z_{i}$, of type $Q_{\geq m} z_i$ and $Q_{< m} z_i$. This should trigger a split of $I$ in additional (but similar) terms to account for that localization; we abuse notation and use the same letter $I$ for the corresponding integrals. In some places we also abuse notation and use $\nabla z$ as if it were a scalar instead of using 
$(\partial_{x_i} z)_{i=1,..,n}$. 

The estimate is symmetric in $k_1$ and $k_2$, hence we can freely assume that $k_1 \leq k_2$.
We split the argument in two cases, based on whether $2^{k_3}$ is a low or high frequency (relatively to the other two).

Case 1. $|k_2-k_3| \leq 10$. Recall also that  $k_1 \leq k_2$. We refer to $z_2,z_3$ as the high frequency factors and $z_1$ as the low frequency factor. The target bound (from \eqref{triSk}) is the following 
\begin{equation} \label{Ib1}
|I| \les 2^{\frac{n k_1}2} \|z_{1}\|_{S_{k_1,str}}   \|z_{2}\|_{S_{k_2}} \|z_{3}\|_{S_{k_3}}.
\end{equation}
Assume that one high frequency factor has modulation $\geq 2^{k_1+k_2}$;
if that is $z_{2}$, we use \eqref{uvL2} and \eqref{UVX} to estimate as follow 
\[
\begin{split}
|I| & \les \| \nabla z_{2} \|_{L^2_{t,x}} \|  \nabla z_{1} \cdot \bar z_{3}\|_{L^2_{t,x}} \les 
2^{-\frac{k_1+k_2}2} 2^{k_2} \|z_{2}\|_{V^2_\Delta} 2^{\frac{(n-1)k_1-k_3}2} 2^{k_1} \|z_{1}\|_{S_{k_1}} 
\|z_{3}\|_{S_{k_3}}  \\
& \les  2^{\frac{n k_1}2} \|z_{1}\|_{S_{k_1}}   \|z_{2}\|_{S_{k_2}} \|z_{3}\|_{S_{k_3}}.
\end{split}
\]
A similar estimate is available if $z_{3}$ has modulation $\geq 2^{k_1+k_2}$. 

Next we consider the case when both high frequencies have modulations $\leq 2^{k_1+k_2}$. In that case, a straighforward computation shows that $z_{1}$ is also localized at modulation $\les 2^{k_1+k_2}$; the higher modulation component has zero contributions to $I$. Thus below we implicitly assume that all terms involved have modulations $\les 2^{k_1+k_2}$. Recalling \eqref{null}, we write  
\begin{equation} \label{nullI}
\begin{split}
I=\frac12(-I_1-I_2+I_3), \quad I_1 & = \int  (i \partial_t + \Delta ) z_{1} \cdot z_{2} \cdot \bar z_{3} dx dt, \\
I_2= \int   z_{1} \cdot (i \partial_t + \Delta )   z_{2} \cdot \bar z_{3} dx dt, \quad 
I_3 & =  \int z_{1} \cdot  z_{2} \cdot \overline{(i \partial_t + \Delta )  z_{3}} dx dt.
\end{split}
\end{equation}
Using \eqref{uvL2} and \eqref{UVX} we obtain
\[
\begin{split}
|I_2| & \les \| (i \partial_t + \Delta )   z_{2} \|_{L^2_{t,x}} \| z_{1} \cdot \bar z_{3} \|_{L^2_{t,x}} \les 
2^{\frac{k_1+k_2}2} \| z_{2}\|_{V^2_\Delta} 2^{\frac{(n-1)k_1-k_3}2} \|z_{1}\|_{S_{k_1}} \|z_{3}\|_{S_{k_3}} \\
& \les 2^{\frac{nk_1}2} \|z_{1}\|_{S_{k_1}} \|z_{2}\|_{S_{k_2}}  \|z_{3}\|_{S_{k_3}} .
\end{split}
\]
A similar estimate is available for $I_3$. For the remaining term $I_1$ we note that if we use only the $S_k$ type information, then by invoking \eqref{uvL2}, \eqref{UVX}, the Bernstein inequality and the Strichartz estimate \eqref{V2Str} for functions in $V^2_\Delta$, we obtain
\[
\begin{split}
|I_1| & \les \| (i \partial_t + \Delta )   z_{1} \|_{L^2_{t,x}} \| \tilde P_{k_1} ( z_{2} \cdot \bar z_{3}) \|_{L^2_{t,x}} \les 
2^{\frac{k_1+k_2}2} \| z_{1}\|_{V^2_\Delta}  2^{\frac{(n-2)k_1}2} \| z_{2} \cdot \bar z_3 \|_{L^2_{t} L_x^{\frac{n}{n-1}}} \\
& \les 2^{\frac{k_1+k_2}2} \| z_{1}\|_{V^2_\Delta}  2^{\frac{(n-2)k_1}2} \| z_{2} \|_{L^4_{t} L_x^{\frac{2n}{n-1}}} \| z_{3} \|_{L^4_{t} L_x^{\frac{2n}{n-1}}} \\
& \les 2^{\frac{(n-1)k_1+k_2}2} \| z_{1}\|_{V^2_\Delta} \|z_{2}\|_{V^2_\Delta} \|z_{3}\|_{V^2_\Delta} 
 \les  2^{\frac{(n-1)k_1+k_2}2} \|z_{1}\|_{S_{k_1}} \|z_{2}\|_{S_{k_2}}  \|z_{3}\|_{S_{k_3}};
\end{split}
\]
we note that above we could freely insert the operator $\tilde P_{k_1}$ on the product $z_{2} \cdot \bar z_{3}$ since $z_1$ is localized at frequency $2^{k_1}$, thus $\tilde P_{k_1} z_1 = z_1$. 

The above estimate is not as strong as required in \eqref{Ib1}. On the other hand, if the low frequency were to be localized at modulation $\leq 2^{2k_1+100}$,
that is we reset $I_1$ to be 
\[
I_1 = \int  (i \partial_t + \Delta ) Q_{\leq 2k_1 + 100} z_{1} \cdot z_{2} \cdot \bar z_{3} dx dt
\]
then estimating just as above gives us
\[
|I_1| \les 2^{\frac{nk_1}2} \|z_{1}\|_{S_{k_1}} \|z_{2}\|_{S_{k_2}}  \|z_{3}\|_{S_{k_3}},
\]
which suffices. 

Thus we are left with covering the range of modulation between $2^{2k_1+100}$ and $2^{k_1+k_2}$ for the low frequency. This is precisely where we need to bring in the stronger structure $S_{k_3,str}$; and in fact it is the only place in this argument where it is being used. Assuming that $z_1$ is localized at modulation $2^{2k_1+99} \leq |\tau-\xi^2| \les 2^{k_1+k_2}$, we estimate as follows
\[
\begin{split}
|I_1|  \les \| (i \partial_t + \Delta )   z_{1} \|_{L^2_{t,x}} \| P_{k_1}(z_{2} \cdot \bar z_{3}) \|_{L^2_{t,x}} \les 
2^{k_1} \| z_{1}\|_{S_{k_1,str}} 2^{\frac{(n-2)k_1}2} \|z_{2}\|_{S_{k_2}} \|z_{3}\|_{S_{k_3}},
\end{split}
\]
which suffices. This concludes the proof of the bound \eqref{Ib1}. 

Case 2. $|k_2-k_3| >10$. Since $k_1 \leq k_2$, the only possibility is that $k_3 \leq k_2 -11$ in which case $|k_1-k_2| \leq 10$;  we refer to $z_1,z_2$ as the high frequency factors and $z_3$ as the low frequency factor.  The target bound (from \eqref{triSk}) is the following 
\begin{equation} \label{Ib2}
| I | \les 2^{(n-1)(k_3-k_1)} 2^{\frac{n(k_1+k_2-k_3)}2} \| z_1 \|_{S_{k_1}} \| z_2 \|_{S_{k_2}}  \|z_3\|_{S_{k_3}};
\end{equation}
we already hint that we do not use of the strong structure in this estimate. If one high frequency has modulation $\geq 2^{2k_1-100}$, say $z_{1}$, then we estimate as follow 
\[
\begin{split}
|I| & \les \|\nabla z_1\|_{L^2_{t,x}} \| \nabla z_{2} \cdot \bar z_{3}\|_{L^2_{t,x}} \les 
\|z_{1}\|_{V^2_\Delta} 2^{\frac{(n-1)k_3-k_2}2} 2^{k_2} \|z_{2}\|_{S_{k_2}} \|z_{3}\|_{S_{k_3}}  \\
& \les  2^{\frac{(2n-1)(k_3-k_1)}2} 2^{\frac{n(k_1+k_2-k_3)}2} \|z_{1}\|_{S_{k_1}}  \|z_{2}\|_{S_{k_2}} \|z_{k_3}\|_{S_{k_3}}.
\end{split}
\]
If both high frequencies have modulation $\leq 2^{2k_1-100}$ then we use the decomposition \eqref{nullI} for $I$.
We estimate $I_1$ as follows
\[
\begin{split}
|I_1| & \les \| (i \partial_t + \Delta ) z_{1} \|_{L^2_{t,x}} \| z_{2} \cdot \bar z_{3}  \|_{L^2_{t,x}}
\les 2^{k_1} \|z_{1}\|_{V^2_\Delta} 2^{\frac{(n-1)k_3-k_2}2} \|z_{2}\|_{S_{k_2}} \| z_{3}\|_{S_{k_3}} \\
& \les 2^{\frac{(2n-1)(k_3-k_1)}2} 2^{\frac{n(k_1+k_2-k_3)}2} \|z_{1}\|_{S_{k_1}}  \|z_{2}\|_{S_{k_2}} \| z_{3}\|_{S_{k_3}}. 
\end{split}
\]
A similar estimate is derived for $I_2$. For $I_3$ we note that since the two high frequencies have modulation below $2^{2k_1-100}$ then $z_{k_3}$ is localized at modulation $\approx 2^{2k_1}$ (here we also use the fact that $k_3 \leq k_2-11$); the other components of $z_{k_3}$ bring zero contributions to $I_3$. Using this observation we estimate
\[
\begin{split}
|I_3| & \les \| (i \partial_t + \Delta ) z_{3} \|_{L^2_{t,x}} \| P_{k_3} (z_{1} \cdot z_2) \|_{L^2_{t,x}} 
\les 2^{k_1} \|z_{3}\|_{V^2_\Delta}  2^{\frac{(n-2)k_3}2} \| z_{1} \cdot  z_2 \|_{L^2_{t} L_x^{\frac{n}{n-1}}}  \\
& \les 2^{k_1} 2^{\frac{(n-2)k_3}2} \|z_{3}\|_{S_{k_3}} \| z_{1} \|_{L^4_{t} L_x^{\frac{2n}{n-1}}} \| z_{2} \|_{L^4_{t} L_x^{\frac{2n}{n-1}}} 
 \les 2^{k_1} 2^{\frac{(n-2)k_3}2}  \|z_{3}\|_{S_{k_3}} \|z_{1} \|_{S_{k_1}}  \|z_{2} \|_{S_{k_2}} \\
& \les  2^{(n-1)(k_3-k_1)} 2^{\frac{n(k_1+k_2-k_3)}2} \|z_{1}\|_{S_{k_1}}  \|z_{2} \|_{S_{k_2}}  \|z_{3} \|_{S_{k_3}}. 
\end{split}
\]
This completes the argument for \eqref{Ib2}.

\end{proof}

\begin{proof}[Proof of \eqref{biSk}] Just as before we split the argument into two cases, based on whether the output is high or low frequency. Due to the symmetry in $k_1,k_2$ we can freely assume $k_1 \leq k_2$. 

Case 1. $|k_3 - k_2| \leq 10$. It is easy to see that either $z_{1}$ or $z_{2}$ need to have modulation $\ges 2^{2k_3}$ or else the output vanishes. If $z_{2}$ has the high modulation, then
\[
\begin{split}
\|\nabla z_{1} \cdot \nabla z_{2} \|_{L^2_{t,x}} & \les 2^{k_1} \|z_{1}\|_{L^\infty_{t,x}} 2^{k_2} \|z_{2}\|_{L^2_{t,x}}
\les 2^{\frac{(n+2)k_1}2} \|z_{1}\|_{S_{k_1}} \|  z_{2} \|_{V^2_\Delta} \\
& \les 2^{k_1-k_2} 2^{\frac{nk_1}2}  2^{k_3} \|z_{1}\|_{S_{k_1}} \|z_{2}\|_{S_{k_2}}. 
\end{split}
\]
If $z_{1}$ has the high modulation, then
\[
\begin{split}
\|\nabla z_{1} \cdot \nabla z_{2} \|_{L^2_{t,x}} & \les 2^{k_1} \|z_{1}\|_{L^2_t L^\infty_{x}} 2^{k_2} \|z_{2}\|_{L^\infty_t L^2_{x}}
\les 2^{\frac{(n+2)k_1}2} \|z_{1}\|_{L^2_{t,x}} 2^{k_2} \|  z_{2} \|_{S_{k_2}} \\
& \les 2^{\frac{nk_1}2} 2^{k_3}    \|z_{1}\|_{S_{k_1}} \|z_{2}\|_{S_{k_2}}. 
\end{split}
\]
Case 2. $|k_3- k_2| \geq 11$. Since $k_1 \leq k_2$, the only possibility is that $k_3 \leq k_2 -11$ in which case $|k_1-k_2| \leq 10$. We can estimate directly
\[
\begin{split}
\|P_{k_3}(\nabla z_{1} \cdot \nabla z_{2}) \|_{L^2_{t,x}} & \les 2^{\frac{(n-2)k_3}2} \| \nabla z_{1} \cdot \nabla z_{2}  \|_{L^2_{t} L_x^{\frac{n}{n-1}}} \les 2^{\frac{(n-2)k_3}2} \| \nabla z_{1} \|_{L^4_{t} L_x^{\frac{2n}{n-1}}} \| \nabla z_{2}  \|_{L^4_{t} L_x^{\frac{2n}{n-1}}}  \\
& \les 2^{\frac{(n-2)k_3}2} 2^{k_1} \| z_{1} \|_{S_{k_1}} 2^{k_2} \| z_{2} \|_{S_{k_2}}. 
\end{split}
\]
This estimate suffices if $n \geq 3$, but it does not provide the off diagonal decay $2^{\frac{k_3-k_{max}}4}$ (or any off-diagonal decay of this type) in dimension $n=2$.  Below we  refine the estimate when $n=2$ so that we gain the additional factor. The starting point is the following estimate for free solutions
\begin{equation} \label{impr}
\|P_{k_3}(e^{it\Delta} f \cdot e^{it\Delta} g) \|_{L^2_{t,x}} \les 2^{\frac{k_3-k_1}2} \|f\|_{L^2_x} \|g\|_{L^2_x},
\end{equation}
where $f,g$ are supported at frequency $2^{k_1}$, respectively $2^{k_2}$. We postpone the proof of \eqref{impr} and continue with our argument. From \eqref{impr} we obtain
\[
\|P_{k_3}(\nabla z_{1} \cdot \nabla z_{2}) \|_{L^2_{t,x}} \les 2^{2k_1} 2^{\frac{k_3-k_1}2} \|z_{1}\|_{U^2_\Delta} \|z_{2}\|_{U^2_\Delta}; 
\]
indeed it is easy to see that \eqref{impr} easily generalizes to atoms in $U^2_\Delta$. On the other hand we have
\[
\|\nabla z_{1} \cdot \nabla z_{2} \|_{L^2_{t,x}} \les 2^{k_1} \|z_{1}\|_{L^4_{t,x}} 2^{k_2} \|z_{2}\|_{L^4_{t,x}} \les 2^{k_1} \|z_{1}\|_{U^4_\Delta}  2^{k_2} \|z_{2}\|_{U^4_\Delta}. 
\]
By interpolating between the two estimates above we obtain
\[
\|P_{k_3}(\nabla z_{1} \cdot \nabla z_{2}) \|_{L^2_{t,x}} \les  2^{\frac{k_3-k_1}4} 2^{k_1} \|z_{1}\|_{U^\frac83_\Delta}  2^{k_2} \|z_{2}\|_{U^\frac83_\Delta},
\]
and since $V^2_\Delta \subset U^\frac83_\Delta$, we conclude with 
\[
\|P_{k_3}(\nabla z_{1} \cdot \nabla z_{2}) \|_{L^2_{t,x}} \les  2^{\frac{k_3-k_1}4} 2^{k_1} \|z_{1}\|_{V^2_\Delta}  2^{k_2} \|z_{2}\|_{V^2_\Delta},
\]
This concludes the proof of the estimate \eqref{biSk}.

For proving \eqref{impr} we rescale so that $k_1=0$ and relabel $k_3=k$. Since we estimate $P_k(e^{it\Delta} f \cdot e^{it\Delta} g)$, it suffices to consider the case when $\hat f$ and $\hat g$ are localized in cubes in $\R^3$ of size $2^k$; the two cubes have to be symmetric with respect to the origin so that the support of $\hat f \ast \hat g$ is localized at frequency $2^k$.  The global estimate is then recovered using standard orthogonality arguments.

With the above localization in place, we use Plancherel so that we estimate $\hat f(\xi) \delta_{\tau=\xi^2} \ast \hat g(\eta) \delta_{\tau=\eta^2}$ in $L^2_{\tau,\xi}$. It is easy to see that the intersection of any translate of the support of $\hat f(\xi) \delta_{\tau=\xi^2}$ with the support of $g(\eta) \delta_{\tau=\eta^2}$ has $\mathcal{H}^{1}$ measure $\les 2^k$, and as a consequence we have
\[
\| \hat f(\xi) \delta_{\tau=\xi^2} \ast \hat g(\eta) \delta_{\tau=\eta^2} \|_{L^2} \les 2^{\frac{k}2} \| f\|_{L^2} \|g\|_{L^2}. 
\]
The formalization of this estimate can be found in many places, see for instance \cite{KTV}, Theorem 5.1, or \cite{Ca}, Theorem 
5.2. This concludes the argument for \eqref{impr}. 

\end{proof}

Now that we are done with the proofs of  \eqref{triSk} and \eqref{biSk} we highlight that the only place that we needed the $S_{k,str}$ structure was in the proof of \eqref{triSk}, Case 1, when the low frequency had the high modulation. Moreover the $S^{\frac{n}2}_{str}$ is recovered simply by using information on inputs at the level of the regular structure $S^{\frac{n}2}$, see \eqref{biSk}. The point we want to make is that our original $S^\frac{n}2$ provides almost all the estimates needed, and even the stronger structure is essentially obtained in the nonlinear equation from having inputs only in 
$S^{\frac{n}2}$. 

\subsection{The Schr\"odinger Map equation} In this section we establish the result in Theorem \ref{SMT}.  In comparison to the nonlinearity in \eqref{Mz}, here we have the additional the term $\frac{\bar z}{1+|z|^2}$. The first observation is that $\bar z$ requires the use of the space $\bar S^\frac{n}2$; this is simply defined as
\[
\bar S^\frac{n}2 = \{f: \bar f \in S^\frac{n}2 \},
\]
with the corresponding norm; in a similar fashion we have the spaces $\bar S_k$. We  note that, in our arguments, the only difference between elements in $S^{\frac{n}2}$ and $\bar S^{\frac{n}2}$, respectively $S_k$ and $\bar S_k$, is when it comes to modulation analysis; otherwise linear and bilinear estimates are the same because they involve absolute values where there is  no difference between $z$ and $\bar z$. 

The second aspect we need to address is the fraction $\frac1{1+|z|^2}$. Here we use that $S^\frac{n}2 + \bar S^\frac{n}2$ (the sum space) is an algebra, a fact established in Lemma \ref{Salg}. Using the smallness of the initial data, and hence of the solution, we obtain $\frac{\bar z}{1+|z|^2} \in S^\frac{n}2 + \bar S^\frac{n}2$, thus we treat $\frac{\bar z}{1+|z|^2}$ as an element in $S^\frac{n}2 + \bar S^\frac{n}2$. Therefore it suffices to consider the quadralinear expression 
\[
I=\int z_1 \nabla z_2 \cdot \nabla z_3 \cdot \bar z_4 dx dt,
\]
where $z_1 \in S^\frac{n}2 + \bar S^\frac{n}2$, and $z_i \in S^\frac{n}2_{str}, i=2,..,4$. We claim the following two estimates
\begin{equation} \label{triSka} 
| I | \les 2^{k_4-k_{max}} 2^{\frac{n(k_1+k_2+k_3-k_4)}2} \|z_1 \|_{S_{k_1} + \bar S_{k_1}} \| z_2 \|_{S_{k_2,str}} 
\| z_3 \|_{S_{k_3,str}}  \|z_4\|_{S_{k_4}},
\end{equation}
and
\begin{equation} \label{biSka} 
\begin{split}
 & \| P_{k_4} Q_{\geq 2k_4 + 100} (z_1 \nabla z_2 \cdot \nabla z_3) \|_{L^2_{t,x}}  \\
 \les & \ 2^{\frac{k_4-k_{max}}4} 2^{k_4} 2^{\frac{n(k_1+k_2+k_3-k_4)}2}\| z_{1} \|_{S_{k_1}+\bar S_{k_1}} \| z_2 \|_{S_{k_2}} \| z_3 \|_{S_{k_3}}. 
\end{split}
\end{equation}
In the above we assume that each $z_i, i=1,..,4$ is localized at frequency $2^{k_i}$ and $k_{max}=\max(k_1,k_2,k_3,k_4)$. 
We also use the notation $k_{min}=\min(k_1,k_2,k_3,k_4)$

Assuming \eqref{triSka} and \eqref{biSka}, along with Lemma \ref{Salg}, the proof of Theorem \ref{SMT} is similar to the proof of \ref{MzT} which was detailed in the previous section. The rest of this section  is dedicated to the proofs of \eqref{triSka} and \eqref{biSka}.

\begin{proof}[Proof of \eqref{triSka}]

Due to the symmetry in $k_2,k_3$ we can assume $k_2 \leq k_3$. The argument is organized based on which factor has  the lowest frequency. 

Case 1. $k_2=k_{min}$. We let note that two of the factors $k_2,k_3,k_4$ have to be comparable with $k_{max}$ in the sense $|k_i-k_{max}| \leq 10$ (obviously one of them is $k_{max}$). We let $k_{med}$ be the fourth value which has the property $k_{min} \leq k_{med} \leq k_{max}$.
Using \eqref{uvL2} we obtain the estimate 
\[
\begin{split}
|I| & \les 2^{\frac{(n-1)k_2 - k_{max}}2} 2^{\frac{(n-1)k_{med} - k_{max}}2} 2^{k_2+k_3} \|z_1\|_{S_{k_1} + \bar S_{k_1}} 
\|z_2\|_{S_{k_2}} \|z_3\|_{S_{k_3}} \|z_4\|_{S_{k_4}} \\
& \les 2^{\frac{k_{min}-k_{med}}2} 2^{k_4 - k_{max}} 2^{\frac{n(k_1+k_2+k_3-k_4)}2} \|z_1\|_{S_{k_1} + \bar S_{k_1}}  \|z_2\|_{S_{k_2}} \|z_3\|_{S_{k_3}} \|z_4\|_{S_{k_4}} \\
\end{split}
\] 

Case 2. $k_1=k_{min}$ and $k_1 \leq k_4-10$ (the subcase $k_1 > k_4-10$ will be treated in Case 4 below). Here we simply note that 
\[
\|z_{1} \cdot \bar z_{4}\|_{\bar S_{k_4}} \les 2^{\frac{nk_1}2} \|z_1\|_{S_{k_1}+\bar S_{k_1}} \|z_4\|_{S_{k_4}},
\]
and the output $z_{1} \cdot \bar z_{4}$ is localized at frequency $2^{k_4}$. This is in fact what is being established in the following section, in the proof of the claim that $S^{\frac{n}2}+ \bar S^{\frac{n}2}$ is an algebra; essentially it says that the high frequency input dictates the nature of the output (where in $S_{k}$ or $\bar S_k$). 

With the above estimate at hand, we can replace $z_{1} \cdot \bar z_{4}$ with an element $\bar w_4 $ where $w_4 \in S_{k_4}$ is localized at frequency $\approx 2^{k_4}$. Now we are in the setup of  \eqref{triSk}, hence we obtain
\[
|I| \les 2^{k_4-k_{max}} 2^{\frac{n(k_1+k_2+k_3-k_4)}2} \|z_1\|_{S_{k_1}+\bar S_{k_1}} \|z_2\|_{S_{k_2}} \|z_3\|_{S_{k_3}} \|z_4\|_{S_{k_4}}. 
\]

Case 3. $k_4=k_{min}$. Here we use \eqref{uvL2} to estimate
\[
\begin{split}
|I| & \les 2^{\frac{(n-1)k_4 - k_{max}}2} 2^{\frac{(n-1)k_{med} - k_{max}}2} 2^{k_2+k_3} \|z_1\|_{S_{k_1} + \bar S_{k_1}} 
\|z_2\|_{S_{k_2}} \|z_3\|_{S_{k_3}} \|z_4\|_{S_{k_4}} \\
& \les 2^{k_{4}-k_{max}} 2^{\frac{n(k_1+k_2+k_3-k_4)}2} \|z_1\|_{S_{k_1} + \bar S_{k_1}}  \|z_2\|_{S_{k_2}} \|z_3\|_{S_{k_3}} \|z_4\|_{S_{k_4}} \\
\end{split}
\]

Case 4. There is one remaining case which was not covered in Case 3, that is  $k_1=k_{min}$ and $k_1 \geq k_4-10$. But this is similar to Case 3 above. 

\end{proof}

\begin{proof}[Proof of \eqref{biSka}] Due to the symmetry with respect to $k_2,k_3$ we assume that $k_2 \leq k_3$.
We denote by $\bar k_{min}=\min(k_1,k_2,k_3)$ and $\bar k_{max}=\max(k_1,k_2,k_3)$.
Also we let $\bar k_{med} \in \{k_1,k_2,k_3\}$ be such that $\bar k_{min} \leq \bar k_{med} \leq \bar k_{max}$. 
We notice that in order to have non-zero outputs in the expression $P_{k_4} (z_1 \nabla z_2 \cdot \nabla z_3)$, we should have $k_4 \leq \bar k_{max} + 10$. 
We organize the argument mainly base on the relation between $k_4$ and $\bar k_{min}$ and 
$\bar k_{max}$. 

Case 1. $k_4 \leq \bar k_{min}+10$; in this case we also have $|k_{max}-\bar k_{max}| \leq 100$. We use the Bernstein inequality, the bilinear $L^2_{t,x}$ estimate \eqref{uvL2} for the medium and high frequency product, and $L^\infty_t L^2_x$ for the remaining high frequency, to obtain the estimate
\[
\begin{split}
\| P_{k_4} (z_1 \nabla z_2 \cdot \nabla z_3) \|_{L^2_{t,x}}  
& \les 2^{\frac{nk_4}2} \| z_1 \nabla z_2 \cdot \nabla z_3 \|_{L^2_{t} L^1_x} \\
& \les 2^{\frac{nk_4}2} 2^{k_2+k_3} 2^{\frac{(n-1)k_{med}-k_{max}}2} \|z_1\|_{S_{k_1}+ \bar S_{k_1}} \|z_2\|_{S_{k_2}} \|z_3\|_{S_{k_3}} \\
& \les 2^{(n-1)(k_4-k_{max})} 2^{k_4} 2^{\frac{n(k_1+k_2+k_3-k_4)}2} \|_{S_{k_1}+ \bar S_{k_1}} \|z_2\|_{S_{k_2}} \|z_3\|_{S_{k_3}}. 
\end{split}
\]

Case 2. $k_4 \geq \bar k_{max}-10$; here we also have $|k_4-\bar k_{max}| \leq 10$.  Using the Bernstein inequality for the minimum frequency term (which is placed in $L^\infty_{t,x}$) and the bilinear $L^2_{t,x}$ estimate  \eqref{uvL2} for the medium and high frequency product, allows us to estimate as follows
\[
\begin{split}
\| z_1 \nabla z_2 \cdot \nabla z_3 \|_{L^2_{t,x}}  & \les 2^{\frac{n \bar k_{min}}2} 2^{\bar k_{med}+ \bar k_{max}} 2^{\frac{(n-1)\bar k_{med}- \bar k_{max}}2} \|z_1\|_{S_{k_1}} \|z_2\|_{S_{k_2}} \|z_3\|_{S_{k_3}} \\
& \les 2^{\frac{\bar k_{med}-\bar k_{max}}2} 2^{k_4} 2^{\frac{n(k_1+k_2+k_3-k_4)}2} \|_{S_{k_1}} \|z_2\|_{S_{k_2}} \|z_3\|_{S_{k_3}}.
\end{split}
\]

Case 3. $\bar k_{min} + 10 \leq k_4 \leq \bar k_{max} -10$. We further split this into two subcases.

Case 3a. $k_{1} \geq \bar k_{max}-10$; in this case we have that $k_2 \leq k_4 +100$. We estimate in a similar fashion to Case 2 above
\[
\begin{split}
\| z_1 \nabla z_2 \cdot \nabla z_3 \|_{L^2_{t,x}}  & \les \| z_1\|_{L^\infty_{t,x}} \| \nabla z_2 \nabla z_3\|_{L^2_{t,x}} \\
& \les 2^{\frac{nk_1}2} \|z_1\|_{S_{k_1}} 2^{k_2} 2^{k_3} 2^{\frac{(n-1)k_{2}-k_{3}}2} \|z_1\|_{S_{k_1}+ \bar S_{k_1}} \|z_2\|_{S_{k_2}} \|z_3\|_{S_{k_3}} \\
& \les 2^{\frac{k_2-k_3}2} 2^{k_4} 2^{\frac{n(k_1+k_2+k_3-k_4)}2} \|z_1 \|_{S_{k_1} + \bar S_{k_1}} \|z_2\|_{S_{k_2}} \|z_3\|_{S_{k_3}} \\
& \les 2^{\frac{k_4-k_{max}}2} 2^{k_4} 2^{\frac{n(k_1+k_2+k_3-k_4)}2} \|z_1\|_{S_{k_1} + \bar S_{k_1}} \|z_2\|_{S_{k_2}} \|z_3\|_{S_{k_3}}. 
\end{split}
\]

Case 3b. $k_{1} < \bar k_{max}-11$; here we also have $|k_2-k_3| \leq 10$ and $k_1 = \bar k_{min} \leq k_4 -10$. As a consequence we have
\[
P_{k_2} (z_1 \nabla z_2 \cdot \nabla z_3)= P_{k_2} (z_1 \tilde P_{k_2} (\nabla z_2 \cdot \nabla z_3)).
\]
By simply estimating $\|z_1\|_{L^\infty_{t,x}} \les 2^{\frac{nk_1}2} \|z_1\|_{S_{k_1}+ \bar S_{k_1}}$, this term is then estimated as in Case 2 of the proof of \eqref{biSk}.

This completes the proof of the claim in \eqref{biSka}. 

\end{proof}

\subsection{The algebra property} We end this section by establishing the algebra property for $S^\frac{n}2$ and $S^\frac{n}2 + \bar S^\frac{n}2$.

\begin{lemma} \label{Salg}
$S^\frac{n}2$ is an algebra. Also $S^\frac{n}2 + \bar S^\frac{n}2$ is an algebra.
\end{lemma}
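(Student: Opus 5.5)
The natural route is a paraproduct decomposition. Take $f=\sum_{k_1} P_{k_1} f$ and $g=\sum_{k_2} P_{k_2} g$, and split $P_k(fg)$ into high--low, low--high and high--high interactions. The statement then reduces to two dyadic bounds, where $u$ sits at frequency $2^{k_1}$ and $v$ at frequency $2^{k_2}$. The first covers the low--high case $k_1\le k_2-10$:
\begin{equation}
\|P_k(u v)\|_{S_{k}} \les 2^{\frac{nk_1}2}\|u\|_{S_{k_1}}\|v\|_{S_{k_2}}, \qquad |k-k_2|\le 5 .
\end{equation}
The second covers the high--high case $|k_1-k_2|\le 10$, $k\le k_2+10$:
\begin{equation}
\|P_k(uv)\|_{S_k} \les 2^{\frac{nk}2}\|u\|_{S_{k_1}}\|v\|_{S_{k_2}} .
\end{equation}
Given these, summation with weight $2^{\frac{nk}2}$ is routine. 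In the low--high sum the factor $2^{\frac{nk_1}2}\|P_{k_1}f\|_{S_{k_1}}$ is summed over $k_1$ to give $\|f\|_{S^{\frac n2}}$. In the high--high sum, $2^{\frac{nk}2}\cdot 2^{\frac{nk}2} \le 2^{\frac{nk_1}2}2^{\frac{nk_2}2}2^{n(k-k_2)}$, and the geometric factor in $k\le k_2$ sums. Continuity in $C_t\dot B^{\frac n2}_{2,1}$ is the classical algebra property of this Besov space.

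\textbf{Low--high bound.} I would prove it by duality, using Theorem \ref{MTl}. Let $w=P_k(uv)$. It solves $(i\partial_t+\Delta)w=P_k(F\,v+u\,G+2\nabla u\cdot\nabla v)$ with $F=(i\partial_t+\Delta)u$ and $G=(i\partial_t+\Delta)v$. This is the null identity \eqref{null} read backwards, and for $\bar S$ factors one uses the analogous identity. Using Theorem \ref{MTl} together with the homogeneous part for the data $w(0)$, it then suffices to bound $|\langle RHS,\phi\rangle|$ by $\|\phi\|_{X_{k,l}}$ for each $l$, uniformly in $l$.

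The three terms are handled as follows.
\begin{itemize}
\item The term $\nabla u\cdot\nabla v\,\bar\phi$ is exactly a trilinear form controlled by the argument of \eqref{triSk}, Case 1, which gives the gain $2^{k_1-k_2}$ times $2^{\frac{nk_1}2}$. The one exception is the strong structure; it can be avoided by instead placing $u$ in $L^\infty_{t,x}$ via Bernstein when $u$ has high modulation, which works here because this term carries no loss of derivative.
\item The term $u\,G\,\bar\phi$: since $u\in L^\infty_{t,x}$ with $\|u\|_{L^\infty}\les 2^{\frac{nk_1}2}\|u\|_{S_{k_1}}$, I would bound $|\langle uG,\phi\rangle|=|\langle G,\bar u\phi\rangle|$. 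Here $\bar u\phi$ is localized at frequency $2^{k_2}$ and $G\in DV^2$ pairs with $U^2$. This is simpler if one avoids duality entirely for this term: write $uv$ directly and verify each component of $X_{k,m}$.
\item The term $F\,v\,\bar\phi$ is handled symmetrically.
\end{itemize}

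\textbf{Direct component check as the cleaner plan.} The cleanest plan is in fact the direct one. Show that multiplication by a low-frequency $u$ preserves each structure of $S_{k_2}$ with constant $\|u\|_{L^\infty_{t,x}}\les 2^{\frac{nk_1}2}\|u\|_{S_{k_1}}$, up to acceptable errors. For $E_k$, $E_{k,\e}$ and their $\geq m$ refinements this is immediate, since these are all weighted $L^\infty L^2$ or $L^\infty L^2$-type norms localized in cubes and intervals, and multiplication by a bounded function commutes with the cutoffs $\chi_I\chi_q$. The modulation projectors $Q_{\geq m}$ are handled by the boundedness shown in \eqref{QbEm}. The nontrivial component is $V^2_\Delta$. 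Write $e^{-it\Delta}(uv)$ and estimate increments using $u\in V^2_\Delta$ and $v\in V^2_\Delta$. Equivalently, use duality: pair $(i\partial_t+\Delta)(uv)$ with $U^2_\Delta$ atoms $\phi$ and apply the bilinear estimate \eqref{uvL2} to $\nabla u\cdot\nabla v$ against $\phi$. For the high--high case, Bernstein at the output frequency plus Strichartz \eqref{V2Str} and \eqref{uvL2} handle $V^2_\Delta$ as in Case 2 of \eqref{biSk}. The local-smoothing and mass pieces follow from $\|uv\|_{L^\infty L^2}$-type bounds with Bernstein $2^{\frac{nk}2}$ on the output.

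\textbf{Main obstacle.} I expect the main obstacle to be the high-modulation structures $Q_{\geq m}(uv)$ in $X_k^{\geq m}$ for $m$ far from the modulations of the inputs. The modulation of $uv$ can be created by $u$ at modulation up to $2^{k_1+k_2}$, so $Q_{\geq m}(uv)$ is not simply $u\,Q_{\geq m}v$. I would split $u=Q_{<m-10}u+Q_{\ge m-10}u$. In the first piece, $Q_{\ge m}(Q_{<m-10}u\cdot v)$ is essentially $Q_{<m-10}u\cdot Q_{\geq m-5}v$, which the $L^\infty$ multiplier argument handles. In the second piece, place $Q_{\ge m-10}u$ in its own $E_{k_1}^{\ge m}$ structure and $v$ in $L^\infty L^2$ on cubes of size $2^{-m+k_2}$, using Bernstein on $2^{-k_1}$ cubes. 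The $l^2$ summation over cubes of side $2^{-m+k_1}$ versus $2^{-m+k_2}$ must be reconciled; I would try to do this via the containment of the coarser cube families.

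For $S^{\frac n2}+\bar S^{\frac n2}$, the same dyadic bounds apply, with the output class dictated by the high frequency factor as noted in Case 2 of \eqref{triSka}. High--high products of an $S$ and an $\bar S$ factor are placed in whichever class is convenient, since at output frequency $2^k$ the estimates used there (Bernstein, $L^\infty L^2$, $V^2$ via Strichartz) do not distinguish $z$ from $\bar z$.
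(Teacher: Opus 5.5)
\textbf{There is a genuine gap.} The central one is the $V^2_\Delta$ component of $P_k(uv)$ in the low--high case. You flag it as ``the nontrivial component'' but never actually bound it.

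Estimating increments directly does not work. One has $e^{-it\Delta}(uv)(t)=B_t(\tilde u(t),\tilde v(t))$ with $B_t(f,g)=e^{-it\Delta}(e^{it\Delta}f\cdot e^{it\Delta}g)$. This is a bilinear multiplier carrying the time-dependent phase $e^{it(|\zeta|^2-|\xi|^2-|\zeta-\xi|^2)}$, so the increments contain $(B_{t_{j+1}}-B_{t_j})(\tilde u(t_j),\tilde v(t_j))$. Since $\|\partial_t B_t\|\approx 2^{k_1+k_2}2^{\frac{nk_1}2}$ and the arguments change with $j$, nothing elementary controls the $\ell^2$ sum of these terms over a partition into many intervals of length $2^{-k_1-k_2}$. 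The paper instead invokes a nontrivial bilinear $V^2$ theorem (\cite{CaHe}, Theorem 5.6), with the multiplier bounded by $2^{\frac{nk_1}2}$ in $L^\infty_\xi L^2_\eta+L^\infty_\eta L^2_\xi$.

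Your duality alternative is only a rewriting of the same increments, and it does not close either:
\begin{itemize}
\item The terms $Fv$ and $uG$ need $\bar v\phi$ or $\bar u\phi$ to lie in $U^2_\Delta$, which fails, as you concede.
\item For $\nabla u\cdot\nabla v\,\bar\phi$, \eqref{uvL2} leaves a third factor that is not in $L^2_{t,x}$.
\item The argument of \eqref{triSk} needs $S_{k_1,str}$ exactly when $u$ has modulation in $[2^{2k_1},2^{k_1+k_2}]$, and that structure is not available for elements of $S^{\frac n2}$.
\item Putting $u$ in $L^\infty_{t,x}$ instead would require $\nabla v\cdot\bar\phi\in L^1_{t,x}$ globally in time, which is not available.
\end{itemize}

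\textbf{You also miss the decomposition that makes most of the low--high case cheap.} The paper splits the output modulation at $2^{k_1+k_2}$.
\begin{itemize}
\item Below the threshold, \eqref{uvL2} gives $\|Q_{\lesssim k_1+k_2}(uv)\|_{X^{0,\frac12,1}}\lesssim 2^{\frac{k_1+k_2}2}\|uv\|_{L^2_{t,x}}\lesssim 2^{\frac{nk_1}2}\|u\|_{S_{k_1}}\|v\|_{S_{k_2}}$. Then $X^{0,\frac12,1}\subset U^2_\Delta$ together with Corollary \ref{U2Xc} yields every component of $S_k$ at once. This includes local smoothing, where your ``immediate'' claim overlooks that $P_{k,\e}$ does not commute with multiplication by $u$; the paper needs Lemma \ref{commute} for this even at high modulation.
\item Above the threshold, one input must carry modulation $\gtrsim 2^m$. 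Your reduction $Q_{\geq m}(Q_{<m-10}u\cdot v)\approx Q_{<m-10}u\cdot Q_{\geq m-5}v$ is false for $2^m\lesssim 2^{k_1+k_2}$. The output modulation is the sum of the input modulations plus the resonance term $2\xi_1\cdot\xi_2$, which can be as large as $2^{k_1+k_2}$.
\end{itemize}

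Finally, in the piece where $u$ carries the modulation, the relevant range is $m>k_1+k_2>2k_1$. There $S_{k_1}$ contains no $X^{\geq m}_{k_1}$ structure. In any case, a factor at frequency $2^{k_1}$ cannot supply $\ell^2$ summability over cubes of side $2^{-m+k_2}<2^{-k_1}$. The paper takes from $u$ only the $\ell^2$ over $I\in\mathcal{I}_{-m}$ of $\|Q_{\geq m}u\|_{L^\infty_tL^2_x(I)}$, and gets the spatial $\ell^2$ from $v$. By Lemma \ref{lmX}, $\|\tilde\chi_I Q_{\leq m}v\|_{X^{0,\frac12,1}}\lesssim\|v\|_{L^\infty_tL^2_x}$, so on each interval of length $2^{-m}$ the factor $v$ decouples across cubes like a superposition of free waves.
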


In order to proceed with the proof of this result, we need some technical ingredients about commutator estimates. 
Following \cite{tao}, we adopt the following notation: given a 
kernel $K \in L^1((\R^2)^M)$, we define the multilinear operator
\[
L_O(f_1,..,f_m)(x,t) = \int K(y_1,..,y_M) f_1(x-y_1,t) \cdot ... \cdot f_1(x-y_M,t) dy_1 ... dy_M.  
\]
We use of the following result, see  \cite[Lemma 1]{tao}:
\begin{lemma} \label{LO}
Let $X_1,..,X_M, X$ be translation-invariant Banach spaces such that we have the product estimate
\begin{equation} 
\|f_1 \cdot ... \cdot f_M \|_{X} \leq A \|f_1\|_{X_1} \cdot ... \cdot \| f_M \|_{X_M},
\end{equation}
for all scalar-valued $f_i \in X_i, i \in \{1,..,M\}$ and for some constant $C$. Then we have  
\begin{equation} 
\|L_O(f_1, ..., f_M) \|_{X} \leq C(m,M) A \|f_1\|_{X_1} \cdot ... \cdot \| f_M \|_{X_M},
\end{equation}
for all $f_i \in X_i, i \in \{1,..,M\}$ that are scalar-valued, $m$-dimensional vectors or $m \times m$ matrices. 
\end{lemma}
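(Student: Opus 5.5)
The plan is to write $L_O$ as an average of products of translates and move the norm inside the integral using Minkowski's inequality. For $y\in\R^n$ let $\tau_y f(x,t)=f(x-y,t)$ denote spatial translation; the translation invariance of $X_1,\dots,X_M,X$ means $\|\tau_y f\|_{X_i}=\|f\|_{X_i}$. By definition,
\[
L_O(f_1,\dots,f_M)=\int K(y_1,\dots,y_M)\, \tau_{y_1}f_1\cdot\ldots\cdot\tau_{y_M}f_M \, dy_1\ldots dy_M ,
\]
which is an $X$-valued integral of the product functions weighted by the $L^1$ density $K$.

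\textbf{Scalar case.} Applying Minkowski's inequality in $X$, then the assumed product estimate, then translation invariance gives
\[
\|L_O(f_1,\dots,f_M)\|_X \le \int |K(y)|\, \Big\|\prod_i \tau_{y_i}f_i\Big\|_X\,dy \le A\int|K(y)|\prod_i\|\tau_{y_i}f_i\|_{X_i}\,dy = A\,\|K\|_{L^1}\prod_i\|f_i\|_{X_i}.
\]
So the constant is $\|K\|_{L^1}A$. The factor $\|K\|_{L^1}$ is implicitly absorbed in the statement's $C(m,M)$, or in the applications the kernels are normalized.

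\textbf{Vector and matrix case.} When the $f_i$ are $m$-vectors or $m\times m$ matrices, the product, whether a dot product, matrix product or entrywise product, expands into at most $m^{M}$ (or $m^{M+1}$) sums of scalar products of components. The kernel $K$ is scalar, so each such term is a scalar $L_O$ applied to components $f_i^{(j)}$. I apply the scalar bound to each term and use $\|f_i^{(j)}\|_{X_i}\le\|f_i\|_{X_i}$, taking the componentwise norm as the natural one on vector-valued spaces. Summing the terms gives the constant $C(m,M)$.

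\textbf{Main obstacle.} The only delicate point is justifying Minkowski's inequality for the $X$-valued integral. I would handle it as follows. Functions of the form $\prod_i\tau_{y_i}f_i$ with $f_i$ Schwartz are continuous in $y$ in the relevant norms. So I would approximate $K$ in $L^1$ by finite linear combinations of point masses, namely Riemann sums for $K\in C_c$, for which the inequality is just the triangle inequality. I would then pass to the limit using either density plus completeness of $X$, or lower semicontinuity of $\|\cdot\|_X$ under distributional convergence. The latter holds for the Lebesgue-type, $V^2_\Delta$, and $l^2$-over-cubes structures used in $S_k$, all of which are duals or sums of such. This is the step I expect to need the most care, though it is routine for the concrete spaces in the paper.
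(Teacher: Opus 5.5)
Your argument is correct and is essentially the proof behind the citation: the paper does not prove this lemma itself but quotes Lemma 1 of \cite{tao}, where it follows from Minkowski's inequality and translation invariance, with vector- and matrix-valued inputs handled by splitting into components. Your remark that the constant is really $\|K\|_{L^1}\,C(m,M)\,A$ is right and matches the convention in \cite{tao} that the kernel has total mass $O(1)$. Your extra care in justifying the $X$-valued Minkowski step goes beyond what the source provides; for instance, lower semicontinuity of the norm of $L^2_{t,x}$, the target space in the paper's application in Lemma \ref{Salg}, suffices there.
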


We also need the following Leibnitz rule for Fourier projectors, see \cite[Lemma 2]{tao}:
\begin{lemma} \label{commute}
If $P'_k$ is any Fourier multiplier with smooth symbol and whose support lies in the region $|\xi| \approx 2^k$, 
then the following commutator identity holds true:
\[
P_k'(fg)= f P_k' g + L_O(\nabla f, 2^{-k} g). 
\]
\end{lemma}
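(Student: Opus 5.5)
The statement to prove is the Leibniz rule of Lemma \ref{commute}, \(P'_k(fg)=fP'_kg+L_O(\nabla f,2^{-k}g)\). The plan is to write \(P'_k\) as a convolution in \(x\) and extract the commutator \([P'_k,f]\) by the fundamental theorem of calculus. This produces an explicit \(L^1\) kernel, which is all the notation \(L_O\) requires.

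Let \(m_k\) be the symbol of \(P'_k\), so \(m_k(\xi)=m(2^{-k}\xi)\) with \(m\) smooth and compactly supported in the annulus \(|\xi|\approx 1\). Then \(P'_k h(x)=\int \check m_k(y)h(x-y)\,dy\), where \(\check m_k(y)=2^{nk}\check m(2^k y)\) and \(\check m\) is Schwartz. Because \(P'_k\) acts only in \(x\), the time variable is a passive parameter throughout.

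The computation comes next. Write
\[
P'_k(fg)(x)-f(x)P'_kg(x)=\int \check m_k(y)\big(f(x-y)-f(x)\big)g(x-y)\,dy .
\]
Apply the identity
\[
f(x-y)-f(x)=-\int_0^1 y\cdot\nabla f(x-sy)\,ds .
\]
This gives
\[
P'_k(fg)(x)-f(x)P'_kg(x)=-\int_0^1\!\!\int \check m_k(y)\,y\cdot\nabla f(x-sy)\,g(x-y)\,dy\,ds .
\]

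Now put the integral in the form of \(L_O\). Substitute \(y_1=sy\) and \(y_2=y\), writing the expression as \(\int K(y_1,y_2)\,\nabla f(x-y_1)\,2^{-k}g(x-y_2)\), with \(K\) the pushforward measure
\[
K=-\int_0^1 2^{k}\,y\,\check m_k(y)\,\delta_{y_1=sy}\,ds .
\]
This \(K\) is a finite measure rather than an \(L^1\) function. Lemma \ref{LO}, whose proof is Minkowski's inequality plus translation invariance, works equally well for finite measures. Alternatively, one reads \(L_O\) with kernels in \(L^1\) of the measure, as in \cite{tao}.

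The remaining check is the mass of \(K\):
\[
\|K\|\le \int 2^{k}|y|\,|\check m_k(y)|\,dy=\int |z|\,|\check m(z)|\,dz ,
\]
using the substitution \(z=2^k y\). This quantity is finite and independent of \(k\), because \(\check m\) is Schwartz. Hence the commutator equals \(L_O(\nabla f,2^{-k}g)\) with a uniformly bounded kernel, which is the claim.

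The only subtle point is this interpretation of \(L_O\) allowing the degenerate (measure) kernel produced by the \(s\)-integration. If strict \(L^1\) is required, one can avoid it as follows. Insert \(\tilde P\)-type projections, writing \(g=\sum_j P_jg\), and split into \(j\le k-5\) and \(j>k-5\). Then use a bilinear-symbol representation \(m_k(\xi+\eta)-m_k(\eta)\), smoothed in \(\xi\) by a frequency cutoff on \(f\). Regularizing in this way turns the measure kernel into a genuine \(L^1\) kernel. I expect this regularization to be routine but notationally heavy.
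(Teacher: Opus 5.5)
Your proof is correct and is essentially the standard argument behind \cite[Lemma 2]{tao}, which the paper cites instead of giving a proof: write $P'_k$ as convolution with $2^{nk}\check m(2^k\cdot)$, apply the fundamental theorem of calculus to $f(x-y)-f(x)$, and note that the resulting kernel has mass $\int |z|\,|\check m(z)|\,dz$, uniformly in $k$. You are right that this kernel is a finite measure rather than an $L^1$ function, and this is harmless: the Minkowski-inequality proof of Lemma \ref{LO} works unchanged for measure kernels, so the regularization you sketch at the end is not needed.
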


Now we are ready to proceed with the proof of the main result in this section. 

\begin{proof}[Proof of Lemma \ref{Salg}] We first provide a proof of the fact that $S^\frac{n}2$ is an algebra.

Assume that $u,v$ are localized at frequency $2^{k_1}$, respectively $2^{k_2}$ with $k_1 \leq k_2$; it suffices to normalize $k_2=0$ so as to keep the numerology simpler. 

We begin by investigating the low-high to high scenario, that is
 $P_k(uv)$ for $|k-k_2| = |k| \leq 100$. From the bilinear estimate \eqref{uvL2} we have the estimate:
\[
\| u v \|_{L^2} \les 2^{\frac{(n-1)k_1}2} \| u \|_{S_{k_1}} \| v \|_{S_{0}}.
\]
As a consequence we obtain
\[
\| Q_{\preceq k_1} ( u v) \|_{X^{0,\frac12,1}} 
\les 2^{\frac{k_1}2} \| Q_{\prec k_1} (u v) \|_{L^2} 
\les  2^{\frac{nk_1}2} \| u \|_{S_{k_1}} \| v \|_{S_{0}}. 
\]
Since $X^{0,\frac12,1} \subset S_k$ we obtain
\[
\| Q_{\preceq k_1+k_2} ( u v) \|_{S_k} \les  2^{\frac{nk_1}2} \| u \|_{S_{k_1}} \| v \|_{S_{0}}. 
\]
The argument for $Q_{\succ k_1}(uv)$ is the lengthier one. We note that $Q_{\succ k_1} ( Q_{\leq k_1} u \cdot Q_{\leq k_1}  v)=0$. Thus in estimating 
$Q_{\succ k_1} (uv)$ we can assume that one of the factors has modulation $\geq 2^{k_1}$. 

\bf The estimate in $V^2_\Delta$. \rm Assume that $u$ has the high modulation structure. Here we write $u = e^{it\Delta} \tilde u, v = e^{it\Delta} \tilde v$ where $\tilde u, \tilde v \in V^2$. As a consequence we have
\[
\begin{split}
\mathcal{F}_x( e^{-it\Delta} (u v)) (t,\zeta) & = \int e^{it \zeta^2} \hat u(t,\xi) \hat v(t,\zeta-\xi) d\xi 
= \int e^{it (\zeta^2-\xi^2-(\zeta-\xi)^2)} \hat{\tilde u}(t,\xi) \hat{\tilde v}(t,\zeta-\xi) d\xi \\
& = \int e^{it (\zeta^2-\xi^2-(\zeta-\xi)^2)} m(\xi,\zeta-\xi) \hat{\tilde u}(t,\xi) \hat{\tilde v}(t,\zeta-\xi) d\xi,
\end{split}
\]
where $m$ is inserted so as to reflect the support conditions $|\xi| \approx 2^{k_1}, |\zeta|, |\zeta-\xi| \approx 1$. 
At this point we can invoke Theorem 5.6 in \cite{CaHe} to conclude the following
\begin{equation}
\| e^{-it\Delta} (uv)  \|_{V^2} \les \left (\sup_{t \in \R} \| e^{it (\zeta^2-\xi^2-\eta^2)} m(\xi,\eta) \|_{L^\infty_\xi L^2_\eta+L^\infty_\eta L^2_\xi} \right) \|\tilde u\|_{V^2} \| \tilde v \|_{V^2} \les 2^{\frac{nk_1}2} \| u\|_{V^2_\Delta} \|  v \|_{V^2_\Delta}. 
\end{equation}

For the rest of the argument we consider two separate cases. Recall that we seek to estimate $Q_{\geq m} (u v)$ where $m \succ k_1$. Given the constraint $m \succ k_1$ is is easy to see that either $u$ or $v$ has to have modulation $\geq 2^{m-10}$; we abuse notation and use the same $\geq m$ threshold.

\bf The term $Q_{\geq m} (u Q_{\geq m} v)$. \rm The refined mass estimate is straightforward:
\[
\| Q_{\geq m} (u Q_{\geq m} v) \|_{E_0^{\geq m}} \les \|u\|_{L^\infty_{t,x}} \| Q_{\geq m} v \|_{E_0^{\geq m}} \les 
2^{\frac{nk}2} \|u\|_{L^\infty_t L^2_x} \| Q_{\geq m} v \|_{E_0^{\geq m}}.
\]
For the local smoothing one has to be careful since multiplication by $u$ may alter the Fourier support of $P_{0,\e} v$. Here we invoke the commutator estimate in Lemma \ref{commute} to obtain
\[
P_{0,\e} (u v) = u  P_{0,\e} v + L_{O} (\nabla u, v). 
\]
Just as above, we obtain
\[
\| u  P_{0,\e} v \|_{E_{0,\e}^{\geq m}} \les 2^{\frac{nk}2} \|u\|_{L^\infty_t L^2_x} \| Q_{\geq m} v \|_{E_{0,\e}^{\geq m}}.
\]
For the second term $L_{O} (\nabla u, v)$ we derive a stronger structure which will imply the desired bound.  We begin with the estimate
\[
\|\nabla u \cdot v \|_{L^2_{t,x}} \les  2^{k} \| u_0\|_{L^\infty_{t,x}} \| v_0 \|_{L^2_{t,x}} \les  2^{\frac{nk}2} \|u\|_{L^\infty_t L^2_x} 2^\frac{k}2 2^{\frac{k-m}2} \| v\|_{V^2_\Delta} \les 2^{\frac{nk}2} \|u\|_{L^\infty_t L^2_x}  \| v\|_{V^2_\Delta},
\]
where in the last line we used the fact that $v$ is localized at modulation $2^m \ges 2^k$ and that $2^k \les 1$. Our spaces are translation invariant, thus on behalf of Lemma \ref{LO} we can extend this result to the whole second term above
\[
\| L_{O} (\nabla u, v) \|_{L^2_{t,x}} \les  2^{\frac{nk}2} \|u\|_{L^\infty_t L^2_x}  \| v\|_{V^2_\Delta}. 
\]
From this, using Bernstein in the direction of $\e$, we can derive
\[
\|P_{0,\e} L_{O} (\nabla u, v) \|_{E_{0,\e}^{\geq 0}} \les  2^{\frac{nk}2} \|u\|_{L^\infty_t L^2_x}  \| v\|_{V^2_\Delta}, 
\]
which implies the bound for all $m$ with $k \leq m \leq 0$. This concludes all bounds for the term $Q_{\geq m} (u Q_{\geq m} v)$.

\bf The term $Q_{\geq m} (Q_{\geq m} u \cdot v)$. \rm Since we have already dealt with the high modulation on $v$, it suffices to provide an argument for the term $Q_{\geq m} (Q_{\geq m}  u  Q_{\leq m} v)$. From the structure of $E_k^{\geq m}$ we have the estimate
\[
\sum_{I \in \mathcal{I}_{-m}}  \| Q_{\geq m} u \|_{L^\infty_t L^2_x(I)}^2 \les \|u\|_{E_{k}^{\geq m}}^2.
\]
Since, just as above, we rely only on $L^\infty_t L^2_x$ bounds for $u$, it suffices to provide the desired estimate within a single interval $I \in \mathcal{I}_{-m}$; the $l^2_I$ summability provided above suffices for the rest of the argument. 

On the other hand, on behalf of \eqref{V2X121}, on an interval $I \in \mathcal{I}_{-m}$, $Q_{\leq m} v$ has the better structure in $U^2_\Delta$, that is $\| \tilde \chi_I Q_{\leq m} v\|_{X^{0,\frac12,1}} \les \| Q_{\leq m} v\|_{L^\infty_t L^2_x} \les \| Q_{\leq m} v\|_{V^2_\Delta}$ and as a consequence
\begin{equation} \label{aux102}
\sup_{I \in \mathcal{I}_{-m}} \|  \tilde \chi_I Q_{\geq m} v \|^2_{E_k^{\geq m}} \les \| Q_{\leq m} v\|_{V^2_\Delta}.
\end{equation}
This inequality should not be misread in the sense that we claim a high modulation structure for a low modulation component. 
There is no summability with respect to the time intervals, only a $\sup_{I \in \mathcal{I}_{-m}}$. What we gain is a summability with respect to $q \in \mathcal{C}_{-m}$ which is a property that free waves have, hence functions in $X^{0,\frac12,1}$ do too. 

From this point on we proceed as above, just that we work on a single interval to obtain
\[
 \|  \chi_I Q_{\geq m} u \cdot Q_{\leq m} v \|^2_{X_k^{\geq m}} \les 2^{\frac{nk}2}\|  \chi_I Q_{\geq m} u \|_{L^\infty_t L^2_x} \cdot \| v\|_{V^2_\Delta}.
\]
Summing this in an $l^2_I$ fashion and using \eqref{aux2} gives the estimate
\[
\|  Q_{\geq m} u \cdot Q_{\leq m} v \|^2_{X_k^{\geq m}} \les 2^{\frac{nk}2}\|  Q_{\geq m} u \|_{E_k^{\geq m}} \cdot \| v\|_{V^2_\Delta}.
\]
Thus so far we provided the estimate in the low-high to high regime that is
\begin{equation} \label{lhha}
\| P_k(u v) \|_{S_k} \les 2^{\frac{nk_1}2} \| u \|_{S_{k_1}} \| v \|_{S_{k_2}},
\end{equation}
where  $u,v$ are localized at frequency $2^{k_1}$, respectively $2^{k_2}$ with $k_1 \leq k_2$ and $|k-k_2| \leq 100$. 

Next we sketch the high-high to low scenario that is
\begin{equation} \label{hhla}
\| P_k(u v) \|_{S_k} \les 2^{\frac{nk_1}2} \| u \|_{S_{k_1}} \| v \|_{S_{0}},
\end{equation}
where  $u,v$ are localized at frequency $2^{k_1}$, respectively $1$ with $|k_1| \leq 10$ and $k \leq -100$. 
The argument here is easier than the one above. 

Without restricting the generality of the argument we assume $k_1=0$. From the bilinear estimate in $L^2_{t,x}$ we obtain
\[
\|Q_{\leq 100} P_k (u v)\|_{S_k} \les \| Q_{\leq 100} P_k (u v)\|_{X^{0,\frac12,1}} \les \| u v\|_{L^2_{t,x}} \les \|u\|_{X_0} \|v\|_{X_0}.
\]
For any $\e \in \S^{n-1}$ we use Bernstein to obtain 
\[
\| Q_{\geq 0} P_k (uv) \|_{E_{k,\e}^{\geq 0}} \les \| P_k (uv) \|_{L^2_{t,x}} \les \|u\|_{X_0} \|v\|_{X_0}.
\]
Thus all that is left is the estimate in $E_{k}^{\geq 2k}$. Here we note that for the interaction $Q_{\geq 100} P_k (u v)$ to be nonzero, either $u$ or $v$ needs to be localized at high modulation $\geq 0$; assume that term is $u$, that is we estimate $Q_{\geq 100} P_k ( Q_{\geq 0} u v)$. In that case we simply use
\[
\begin{split}
& \| Q_{\geq 100} P_k ( Q_{\geq 0} u \cdot v) \|_{E_{k}^{\geq 2k}} \les \| Q_{\geq 100} P_k ( Q_{\geq 0} u \cdot v) \|_{E_{0}^{\geq 0}} \\
\les & \| Q_{\geq 0} u \cdot  v \|_{E_{0}^{\geq 0}} \les \| Q_{\geq 0} u \|_{E_{0}^{\geq 0}} \|v\|_{L^\infty_{t,x}} \les \|u\|_{S_0} \|v\|_{S_0}. 
\end{split}
\]
This finishes the argument for \eqref{hhla}. Using \eqref{lhha} and \eqref{hhla} a standard argument gives that $S^{\frac{n}2}$ is an algebra. 

For the fact that $S^\frac{n}2 + \bar S^\frac{n}2$ is an algebra we note that an inspection of the argument for \eqref{lhha} shows that it works if the low frequency is replaced by $\bar u$, and this implies the following
\begin{equation} \label{lhha2}
\| P_k(u v) \|_{S_k} \les 2^{\frac{nk_1}2} \| u \|_{\bar S_{k_1}} \| v \|_{S_{k_2}},
\end{equation}
where  $u,v$ are localized at frequency $2^{k_1}$, respectively $2^{k_2}$ with $k_1 \leq k_2$ and $|k-k_2| \leq 100$. On the other hand, in the same localization regime, by direct conjugation we obtain
\begin{equation} \label{lhha3}
\| P_k(u v) \|_{\bar S_k} \les 2^{\frac{nk_1}2} \| u \|_{S_{k_1}} \| v \|_{\bar S_{k_2}}, \quad 
\| P_k(u v) \|_{\bar S_k} \les 2^{\frac{nk_1}2} \| u \|_{\bar S_{k_1}} \| v \|_{\bar S_{k_2}}. 
\end{equation}
Things are simpler for the high-high to low regime, where an inspection of the argument for \eqref{hhla} shows we can replicate to establish
\begin{equation} \label{hhla2}
\| P_k(u v) \|_{S_k} \les 2^{\frac{nk_1}2} \| u \|_{S_{k_1}} \| v \|_{\bar S_{k_2}}, \quad
\| P_k(u v) \|_{S_k} \les 2^{\frac{nk_1}2} \| u \|_{\bar S_{k_1}} \| v \|_{\bar S_{k_2}},
\end{equation}
where  $u,v$ are localized at frequency $2^{k_1}$, respectively $2^{k_2}$ with $|k_1 - k_2| \leq 10$ and $k \leq k_2-100$. 

These estimates suffice to conclude that $S^\frac{n}2 + \bar S^\frac{n}2$ is an algebra. 

\end{proof}

\section{Appendix}

In this section we briefly explain how to obtain the result in Lemma \ref{bilR}, stating that for any $p > \frac{n+3}{n+1}$ and any cube $Q \subset \R^{n+1}$ of size $2^{-2k} R$ the following holds true:
\begin{equation} \label{bilR2}
\left( \sum_{q \in \mathcal{C}_{-2k}: q \subset Q } \| e^{it\Delta} f \cdot  e^{it\Delta} g \|^p_{L^2(q)} \right)^\frac1p \leq C(p)  2^{\frac{(n-1)k}2}  \|  f\|_{L^2} \|  g \|_{L^2},
\end{equation}
where $f$ is assumed to be localized at frequency $2^k \les 1$ and $g$ is assumed to be localized at frequency $\approx 1$. The standard estimate that appears in the literature  replaces $L^2_{t,x}(q)$ with $L^p_{t,x}(q)$, which can be morally achieved using H\"older and Bernstein type estimates. The exact implementation does not work exactly this way and instead we unravel the actual proofs to obtain \eqref{bilR2}. The important observation is that regardless on how one runs the induction argument, the centerpiece is an improved bilinear $L^2_{t,x}$ estimate for some well-chosen components. This estimate is then interpolated with some trivial estimates to obtained the desired 
$L^p_{t,x}$; it is in this step that we modify slightly the argument so as to obtain the version in \eqref{bilR2}.

The $L^p_{t,x}$ version of \eqref{bilR2} was originally established in \cite{Tao-BP} in the full range (up to the end-point). Here we follow the approach in \cite{Be-bil} and \cite{Ca}, which in turn is based on wave-tables introduced by Tao in \cite{Tao-BW} in the context of the wave equation. This has the advantage of providing the estimate without an $\epsilon$ loss, thus avoiding the need of an $\epsilon$-removal type argument which seems to be rather delicate in the present context. Otherwise, we do not bring anything new in the argument for the estimate \eqref{bilR2} since, for all practical reasons, the estimate can be derived from the existing ones in \cite{Be-bil} and \cite{Ca} with minimal adjustments. This is why we provide a sketch of the argument highlighting the only adjustment that we need, see Step 3 below; everything else is a verbatim replica  of the arguments in \cite{Be-bil} or \cite{Ca}.

The argument uses an induction on scale, that is for any $R \geq 2^{-2k}$ and any  $p > \frac{n+3}{n+1}$  we let $A_p(R)$ be the best constant in the estimate
\[
\left( \sum_{q \in \mathcal{C}_{-2k}: q \subset Q } \| e^{it\Delta} f \cdot  e^{it\Delta} g \|^p_{L^2(q)} \right)^\frac1p \leq A_p(R) 2^{\frac{(n-1)k}2} \|  f\|_{L^2_x} \|  g \|_{L^2_x},
\]
for any cube $Q$ of radius $2^{-2k}R$. To keep the argument compact, we ignore the margin concept which is a technicality needed due to the use of wave packets which alter a bit the Fourier support of the underlying wave at each induction step. In the absence of the margin requirement $A_p(R)$ is an increasing function; when the margin is taken into account one uses instead a slight modification of $A_p(R)$, but the argument is essentially the same.

\bf Step 1: Base case. \rm The case $R \les 1$ is the base case, the $l^p$ summation is trivial and it suffices to focus on a single cube $q$. We simply invoke the bilinear $L^2_{t,x}$ for free waves to obtain $A_p(R) \les 2^{\frac{(n-1)k}2}$. 

\bf Step 2: Inductive argument - wave table preparation. \rm Let $Q \subset \R^{n+1}$ be a cube of size $R$. Given $j \in \N$ we split $Q$ into $2^{(n+1)j}$ cubes of size $2^{-j} R$ and denote this family by $\calQ_j(Q)$; thus we have $Q=\cup_{B \in \calQ_j(Q)} B$.  
If $j \in \N$ and $0 \leq c \ll 1$ we define the $(c,j)$ interior $I^{c,j}(Q)$ of $Q$ by
\begin{equation} \label{icj}
I^{c,j}(Q) := \bigcup_{B \in \calQ_j(Q)} (1-c) B.
\end{equation}
Here by $(1-c)B$ we mean the cube obtained by rescaling $B$ by a factor $1-c$ from its center.  Given $j \in \N$ we define a table $\Phi$ on $Q$ to be a vector $\Phi=(\Phi^{B})_{B \in \calQ_j(Q)}$, where each $\Phi^B$ is a free wave and define its mass by
\[
M(\Phi) = \sum_{B \in \calQ_j(Q)} M(\Phi^{B}); 
\]
for a free wave $M(e^{it \Delta} f)= \|f\|_{L^2_x}^2= \|e^{it \Delta} f\|_{L^\infty_t L^2_x}^2$.

Proposition 5.1 in \cite{Be-bil} or Theorem 5.1 in \cite{Ca} (the latter needs to be combined with Theorem 9.3 to extract the improved $L^2_{t,x}$ bounds below) provide the following result. Given a cube $Q$ of size $2^{-2k} R$, there exists a decomposition of $\phi=e^{it\Delta} f, \psi=e^{it\Delta} g$ as follows
\[
\phi=\sum_{B \in \calQ_{C_0}(Q) } \Phi^{B}, \quad \psi =\sum_{B \in \calQ_{C_0}(Q) } \Psi^{B}
\]
with the following properties:

i) For every $B \in \calQ_{C_0}(Q) $, $\phi^B, \psi^B$ are free waves and
\[
M(\Phi) \leq (1+Cc) M(\phi), \quad M(\Psi) \leq (1+Cc) M(\psi).
\]

ii) For every $B',B'' \in \calQ_{C_0}(Q), B' \neq B''$ the following holds true
\begin{equation} \label{phiBpsi}
\| \Phi^{B'} \psi \|_{L^2_{t,x} ((1-c) B'')} \les c^{-C} (2^{-2k} R)^{-\frac{n-1}4} M(\phi)^\frac12 M(\psi)^\frac12, 
\end{equation}
and 
\[
\| \Phi^{B'} \Psi^{B''} \|_{L^2_{t,x} ((1-c) B')} \les c^{-C} (2^{-2k} R)^{-\frac{n-1}4} M(\phi)^\frac12 M(\psi)^\frac12. 
\]
The constants above are as follows. $C_0$ is a constant needed to ensure that the induction hypothesis at scale $\frac{R}2$ applies to $B$'s; $C$ is a universal constant that builds up from the use of $\les$ in various places; $c$ is a parameter to be thought as $R^{-}$ (a small negative power of $R$) that will ensure that the accumulation of terms $(1+Cc)$ at each inductive step is controlled. 

\bf Step 3: Inductive argument - the off diagonal interactions. \rm We remark this is the only part where we need to slightly modify the arguments in \cite{Be-bil} and \cite{Ca}. We look into the term $\Phi^{B'} \psi$ and provide the estimate in our structure. We re-write \eqref{phiBpsi} as follows
\[
\left( \sum_{q \in \mathcal{C}_{-2k}: q \subset (1-c) B''} \| \Phi^{B'} \psi \|^2_{L^2_{t,x} (q)} \right)^\frac12 \les c^{-C} (2^{-2k} R)^{-\frac{n-1}4} M(\phi)^\frac12 M(\psi)^\frac12.
\]
For any fixed time interval $I$ of length $2^{-2k}$, we also have the following
\[
 \sum_{q \in \mathcal{C}_{-2k}: q \cap (I \times \R^n) \neq \emptyset} \| \Phi^{B'} \psi \|_{L^2_{t,x} (q)}  \les 2^{\frac{(n-1)k}2} M(\phi)^\frac12 M(\psi)^\frac12;
\]
This is a consequence of the fact that, for fixed $I$ of size $2^{-2k}$, the bilinear $L^2_{t,x}$ is essentially localized in cubes $q$ due to the fact that the speed of propagation for both waves is $\les 1$. Since there are $\approx R$ time intervals needed to cover $Q$,  from the above estimate we obtain 
\[
 \sum_{q \in \mathcal{C}_{-2k}: q \subset Q} \| \Phi^{B'} \psi \|_{L^2_{t,x} (q)}  \les 2^{\frac{(n-1)k}2} R M(\phi)^\frac12 M(\psi)^\frac12.
\]
Interpolating between the $l^2_q$ and $l^1_q$ estimates above leads to
\[
\left( \sum_{q \in \mathcal{C}_{-2k}: q  \subset (1-c) B''} \| \Phi^{B'} \psi \|^p_{L^2_{t,x} (q)} \right)^\frac1p  \les 2^{\frac{(n-1)k}2} R^{\frac{n+3}2((\frac1p-\frac{n+1}{n+3}))} M(\phi)^\frac12 M(\psi)^\frac12,
\]
which we recall that it holds true for any $B'' \neq B'$. In a similar manner we obtain
\[
\left( \sum_{q \in \mathcal{C}_{-2k}: q  \subset (1-c) B'} \| \Phi^{B'} \Psi^{B''} \|^p_{L^2_{t,x} (q)} \right)^\frac1p  \les 2^{\frac{(n-1)k}2} R^{\frac{n+3}2((\frac1p-\frac{n+1}{n+3}))} M(\phi)^\frac12 M(\psi)^\frac12,
\]
for any $B'' \neq B'$.

\bf Step 4: Inductive argument - conclusion. \rm The rest of the inductive argument is identical to the arguments in \cite{Be-bil} and \cite{Ca}. Using the estimates from Step 3 and the induction hypothesis for the diagonal interactions $\Phi^{B} \Psi^{B} $, we estimate as follows
\[
\begin{split}
& \left( \sum_{q \in \mathcal{C}_{-2k}: q \cap I^{c,C_0}(Q) \neq \emptyset} \| \phi \cdot \psi \|^p_{L^2_{t,x}(q)} \right)^\frac1p 
 \leq   \sum_{B \in \calQ_{C_0}(Q)}  A_p(\frac{R}2) M(\Phi^{B})^\frac12 M( \Psi^{B} )^\frac12 +  K \\
 \leq & A_p(\frac{R}2) \left( \sum_{B \in \calQ_{C_0}(Q)} M(\Phi^{B}) \right)^\frac12 
\left( \sum_{B \in \calQ_{C_0}(Q)} M(\Psi^{B}) \right)^\frac12 +  K \\
 \leq & A_p(\frac{R}2) M(\Phi)^\frac12 M(\Psi)^\frac12 
+ C c^{-C} R^{\frac{n+3}2(\frac1p-\frac{n+1}{n+3})}  M(\phi)^\frac12 M(\psi)^\frac12 \\
 \leq & \left( (1+cC)  A_p(\frac{R}2) + C c^{-C} R^{\frac{n+3}2(\frac1p-\frac{n+1}{n+3})} \right) M(\phi)^\frac12 M(\psi)^\frac12, 
\end{split}
\] 
where we used the notation $K=C c^{-C} R^{\frac{n+3}2(\frac1p-\frac{n+1}{n+3})}  M(\phi)^\frac12 M(\psi)^\frac12$ for the contributions estimated at Step 3. 

Using a version of Lemma 2.3 in \cite{Be-bil} adapted to our setup, we have that, given a cube $\bar Q$ of size $R$ there exists a cube $Q$ of size $2R$ contained in $4\bar Q$ such that
\[
\left( \sum_{q \in \mathcal{C}_{-2k}: q \cap \bar Q \neq \emptyset} \| \phi \cdot \psi \|^p_{L^2_{t,x}(q)} \right)^\frac1p \leq (1+Cc) \left( \sum_{q \in \mathcal{C}_{-2k}: q \cap I^{c,C_0}(Q) \neq \emptyset} \| \phi \cdot \psi \|^p_{L^2_{t,x}(q)} \right)^\frac1p,
\]
which, when combined with the above estimate, leads to 
\begin{equation} \label{ApRi}
A_p(R) \leq (1+cC) A_p(\frac{R}2) + C c^{-C} R^{\frac{n+3}2(\frac1p-\frac{n+1}{n+3})},
\end{equation}
for as long as $C_0 \geq 3$ (this is needed so that $2^{-C_0} 4R \leq \frac{R}2$, which ensures that we can apply the induction hypothesis at the level of $B$'s with $A_p(\frac{R}2)$). 

Then the argument is closed just as in the proof of Theorem 1 in \cite{Be-bil}, see the proof of (2.1) there which is provided right after Proposition 2.2 which essentially states \eqref{ApRi}. $C$ is a universal constant, and one choses $c$ such that $c^{-C}=R^{-\frac{n+3}4(\frac1p-\frac{n+1}{n+3})}$. This choice guarantee that, as long as $p > \frac{n+3}{n+1}$, we obtain $A_p(R) \les A_p(1)$ uniformly in $R \ges 1$. 

Finally to obtain \eqref{LbilfwU2}, which is the upgrade of \eqref{Lbilfw} from free waves to functions in $U^2_\Delta$,
there is a nice and compact argument formalized by Candy in \cite[Theorem 5]{Ca-note}. The basic idea is to randomize the estimate for free waves and use the Khintchine’s inequality to obtain the claim for function in $U^2_\Delta$. The details can be found in the proof of \cite[Theorem 5]{Ca-note}. 

We note that prior to the above argument, strategies for upgrading statements from free waves to functions in $U^2_\Delta$ were developed in \cite{CaHe-t} and \cite{Ca}.

\bibliographystyle{amsplain} \bibliography{RS-refs}

\end{document}